\providecommand{\U}[1]{\protect\rule{.1in}{.1in}}
\newtheorem{theorem}{Theorem}
\newtheorem{corollary}[theorem]{Corollary}
\newtheorem{definition}[theorem]{Definition}
\newtheorem{lemma}[theorem]{Lemma}
\newtheorem{notation}[theorem]{Notation}
\newtheorem{proposition}[theorem]{Proposition}
\newtheorem{remark}[theorem]{Remark}
\newtheorem{convention}[theorem]{Convention}
\numberwithin{equation}{section}
\numberwithin{theorem}{section}
\newtheorem{thmx}{Theorem}
\newtheorem{corx}{Corollary}
\begin{document}
\title{ Ricci Curvature and the Manifold Learning Problem}
\author{Antonio G. Ache}
\address{Mathematics Department, University of Notre Dame, 
Notre Dame, Indiana, 46556 USA }
\email{aache@nd.edu}
\author{Micah W. Warren}
\address{Department of Mathematics, University of Oregon, Eugene OR 97403}
\email{micahw@oregon.edu}
\thanks{The first author was partially supported by a postdoctoral fellowship of the
National Science Foundation, award No. DMS-1204742.}
\thanks{The second author was partially supported by NSF Grant DMS-1438359. }
\maketitle  

\begin{abstract}
{ Consider a sample of $n$ points taken i.i.d from a submanifold $\Sigma$ of
Euclidean space. We show that there is a way to estimate the Ricci curvature
of $\Sigma$ with respect to the induced metric from the sample. Our method is
grounded in the notions of \emph{Carr\'{e} du Champ} for diffusion
semi-groups, the theory of empirical processes and local Principal Component
Analysis.}

\end{abstract}
\tableofcontents


\section{Introduction}

In this paper we are concerned with the structure of sets of large data in
high dimensions. Even though we deal with sets of points in $\mathbb{R}^{N}$
for $N$ large, a common assumption when studying large data sets is that the
points lie in or on the vicinity of an embedded low dimensional submanifold
$\Sigma^{d}$ of $\mathbb{R}^{N}$. This assumption is oftentimes called
\emph{the manifold assumption} and the study of geometric and topological
properties of sets satisfying the manifold assumption is what we call nowadays
\emph{manifold learning}. The interest in the structure of large data sets
comes from the need of organizing information arising from many different
sources, for example, images, signals, genomes and other outcomes. Even though
there has been significant progress in the manifold learning problem in the
last decade, a fundamental question remains unanswered: construct an algorithm
for \emph{learning} or effectively estimating the curvature of a manifold that
is being approximated by a point cloud.{ In this paper we lay the theoretical
foundation for estimating the Ricci curvature of an embedded submanifold of
$\mathbb{R}^{N}$ if one only knows a point cloud approximating the
submanifold. In particular, combining with the recent work of Singer and Wu
\cite{SW12} and PCA (Principal Component Analysis), we offer a construction
which takes a point cloud and generates the geometric information as follows.
}

\begin{itemize}
\item \textbf{Input:} A sequence of points $\xi_{1},...,\xi_{n} \subset
\mathbb{R}^{N}$ and a {bandwidth} parameter $t$.

\bigskip

\item \textbf{Output:} For each point $\xi_{i}$, an approximate basis for a
tangent space at $\xi_{i}$, and Ricci curvature matrix approximating the Ricci
curvature with respect to this basis.

If points are sampled randomly from a smooth submanifold, then for $n$ large
and $t$ small, there will be an orthogonal connection matrix $O_{\xi_{i}%
,\xi_{j}}$ approximating the connection between the tangent spaces of nearby
points $\xi_{i},\xi_{j} $, \cite{SW12}.  
\end{itemize}

There is a choice of kernel and and PCA cutoff parameter, which may affect the
output, but will not affect the limit when the points are sampled from a
smooth submanifold as $n \to\infty.$

Some advantages of our method are the following:

\begin{itemize}
\item It generates a weighted Ricci curvature which takes into account the
underlying probability density.

\item We do not need to approximate the derivatives of the underlying metric.

\item It generates an approximate Ricci curvature even without assuming that
the manifold has a constant dimension

\item Our method allows one to study the convergence of the sample version
of Ricci curvature to its actual value based on extrinsic information like the
\emph{reach} of the submanifold (see Definition \ref{reachdef}).

\item When the connection forms $O_{\xi_{i},\xi_{j}}$ are available, it allows
one to approximate the Hodge Laplacian on 1-forms.
\end{itemize}

\bigskip

The following is our main result: \ For a more precise statement, see Theorem \ref{Ricciapprox} in section \ref{appML}. 

\begin{thmx}
[Approximation of the Ricci Curvature]\label{Main1}Consider the metric measure
space $(\Sigma,\Vert\cdot\Vert,d\mathrm{vol}_{\Sigma})$ where $\Sigma
^{d}\subset\mathbb{R}^{N}$ is a smooth closed embedded submanifold. Suppose
that we have a uniformly distributed i.i.d. sample $\{\xi_{1},\ldots,\xi
_{n}\}$ of points from $\Sigma.$ { For $x\in\Sigma$, one can define a
{sequence of $d$-tuples of orthogonal vectors, and $d\times d$ Ricci matrices
$\hat{R}_{i,j}$ representing the Ricci curvature on these vectors, such at if
$\eta\in T_{x}\Sigma$, then
\[
\left\vert \hat{R}_{i,j}\eta^{i}\eta^{j}-\mathrm{Ric}_{x}(\eta,\eta
)\right\vert \overset{\mathrm{a.s}}{\longrightarrow}0,
\]
}where $\eta^{i}$ are the components of the vector $\eta$ projected onto the
$d$-tuple of vectors approximating the tangent plane. }
\end{thmx}

{We will see that the sequence of tangent spaces can be be constructed using a
method known as \emph{local Principal Component Analysis} (local PCA). Since
this construction is a crucial part of the article \cite{SW12}, we will devote
Section \ref{localPCA} to a fairly detailed explanation of the construction
of vectors using PCA. \ Construction of the Ricci curvature operators is
described in section \ref{background}.}

We will also state, (but not prove) a corresponding theorem that applies when
the sample of points is distributed i.i.d withrespect to a volume form other than the given Riemannian volume form.    \ See Theorem \ref{nonuniform}.

Our method is based on purely on distance, not on combinatorics. In short, we iterate an approximate, kernel-generated Laplacian to obtain a Bakry-Emery operator. To obtain a ``coarse Ricci curvature" this is applied to a function which should be approximately linear at a point. To obtain a Ricci curvature on the PCA basis, we apply this operator to linear functions with differentials determined by the PCA.

Our notion does not assume or attempt to create a triangulation of the manifold.  There have been a number of works recently that study the Forman-Ricci curvature \cite{KKRT,PMT,MR3522779, WeberSJ16}.  Forman-Ricci curvature \cite{Forman} (like ours) is based on the Bakry-Emery approach, but requires a graph structure so is inherently combinatorial.   Indeed, our main results do not contradict results regarding failures of curvature convergence for \textit{triangulated} approximations to a manifold \cite[pg. vii]{LN2184}. 

Our interest in the Ricci curvature is not arbitrary, but is motivated by very
concrete problems in applied mathematics. One example of these problems from topological data analysis : estimating the spectrum of the Hodge Laplacian on 1-forms with
respect to the induced metric of an embedded submanifold $\Sigma$ of
$\mathbb{R}^{N}.$ {Since the seminal paper by Carlsson \cite{GC09} there has been an explosion of interest in topological data analysis, cf.  \cite{GC14}. The spectrum of the Hodge Laplacian} is important in the study of topological properties of
$\Sigma$. Singer and Wu's analysis based on Vector
Diffusion Maps \cite{SW12} represents significant progress in the estimation of the
spectrum of the so called \emph{connection} or \emph{rough} Laplacian on
1-forms. However, there does not yet seem to be an effective way to
estimate the Hodge Laplacian of an embedded manifold. We remark that an
effective algorithm for learning the Ricci curvature of an embedded
submanifold could in principle provide us with a method for estimating the
Hodge Laplacian on 1-forms in view of the \emph{Weitzenb\"{o}ck} formula.
More precisely, given a metric $g$ on $\Sigma$ and a 1-form $X\in\Omega
^{1}(\Sigma^{d})$ we know that
\[
\Delta_{g}X=-\Delta_{H}X+\mathrm{Ric}(X),
\]
where $\mathrm{Ric}(X)$ is the Ricci endomorphism applied to $X$, i.e.,
$\mathrm{Ric}(X)=g^{jk}\mathrm{Ric}_{ij}X_{k}$. {In terms of linear algebra,
if we have taken $n$ points from a $d$ dimensional manifold, with $n$
sufficiently large, the PCA will give us a basis for the tangent space, in
which case the space of 1-forms can be described as a vector in
$\mathbb{R}^{nd}$ - that is, for each point choosing a $d$-vector. The Hodge
Laplacian then becomes a map
\[
\Delta_{H}:\mathbb{R}^{nd}\rightarrow\mathbb{R}^{nd}%
\]
which one can analyze. In the current paper, we do not attempt to prove
spectral convergence, as one can see from the \cite{SW13} that this is
expected to be somewhat involved. }  

Another motivation for the problem of learning the Ricci curvature of a
submanifold is to {measure the robustness of networks}, in particular in biology \cite{Bio1, WSJ2,TSZ+,PMT}. In \cite{TSZ+}, a connection has
been established between Ricci curvature of graphs and the robustness of
cancer networks. Moreover, it has been suggested that robustness of cancer
networks is associated to a certain ``entropy" and that the Ricci curvature of
a graph is closely related to such entropy. The notion of Ricci curvature used
in \cite{TSZ+} is Ollivier's coarse Ricci curvature.  { The recent paper \cite{PMT} compares different notions of Ricci curvature when applied to biological networks. Again we note that most of the methods are based on combinatorial approaches.  }

As we will see, our method is based on the fact that it is possible to
estimate the \emph{rough Laplacian} of the induced metric of an embedded
submanifold of $\mathbb{R}^{N}$. Given an embedded submanifold $\Sigma^{d}$ of
$\mathbb{R}^{N}$, and an embedding $F:\Sigma^{d}\rightarrow\mathbb{R}^{N}$,
the \emph{metric induced by $F$} is given in coordinates by $g_{ij}=\langle
D_{i}F,D_{j}F\rangle$ where $\langle\cdot,\cdot\rangle$ is the Euclidean inner
product in $\mathbb{R}^{N}$ and $D$ means {differentiation with respect to
some coordinates on $\Sigma^{d}$.} By rough Laplacian of $g$ we mean the
operator defined on functions by $\Delta_{g}f=g^{ij}\nabla_{i}\nabla_{j}f$
where $\nabla$ is the Levi-Civita connection of $g$ and $f$ is a
smooth function defined on $\Sigma$. Belkin and Niyogi showed in \cite{BN08}
that given a uniformly distributed point cloud on $\Sigma$ there is a
1-parameter family of operators $L_{t},$ which converge to the
Laplace-Beltrami operator $\Delta_{g}$ on the submanifold. More precisely, the
construction of the operators $L_{t}$ is based on an approximation of the heat
kernel of $\Delta_{g}$, and in particular the bandwidth parameter $t$ can be interpreted
as a choice of scale. In order to learn the rough Laplacian $\Delta_{g}$ from
a point cloud it is necessary to write a sample version of the operators
$L_{t.}$ Then, supposing we have $n$ data points that are independent and
identically distributed (abbreviated by i.i.d.) one can choose a bandwidth parameter $t_{n}$
in such a way that the operators $L_{t_{n}}$ converge almost surely to the
rough Laplacian $\Delta_{g}$. This step follows essentially from applying a
quantitative version of the \emph{law of large numbers}. Thus one can almost
surely learn spectral properties of a manifold. While in \cite{BN08} it is
assumed that the sample is uniform, it was proved by Coifman and Lafon in
\cite{CoLaf06} that if one assumes more generally that the distribution of the
data points has a smooth, strictly positive density in $\Sigma,$ then it is
possible to normalize the operators $L_{t}$ in \cite{BN08} to recover the
rough Laplacian. More generally, the results in \cite{CoLaf06} and \cite{SW12}
show that it is possible to recover a whole family of operators that include
the Fokker-Planck operator and the weighted Laplacian $\Delta_{\rho}f=\Delta
f-\langle\nabla\rho,\nabla f\rangle$ associated to the smooth metric measure
space $(M,g,e^{-\rho}d\mathrm{vol})$, where $\rho$ is a smooth function. \ As
the Bakry-Emery Ricci tensor can be obtained by iterating $\Delta_{\rho}$, the
Bakry-Emery Ricci tensor can be approximated by iterating approximations of
$\Delta_{\rho}.$ \ \ Following \cite{BN08}, Singer and Wu have recently
developed methods for learning the rough Laplacian of an embedded submanifold
on 1-forms using \emph{Vector Diffusion Maps} (VDM) (see for example
\cite{SW12}).

It is the goal of this paper, together with \cite{AW1} and \cite{AW2}, to
demonstrate that the above discussed approximation of the rough Laplacian can
be continued to approximate Ricci curvature as well. In fact, our
approximation method is based on writing sample counterparts of the Ricci
curvature. More generally, we will show that it is possible to show that one
can define sample counterparts of more general objects, for example of the
notions of \emph{Carr\'{e} du Champ} and \emph{iterated Carr\'{e} du Champ}
associated to a diffusion semi-group. Our idea for estimating the Carr\'{e} du
Champ (and ultimately the Ricci curvature) from a sample is closely related to
the results in \cite{AW1,AW2}. For example, in \cite{AW2} we define a family
of coarse Ricci curvatures which depend on a scale parameter $t,$ and show
that when taken on a smooth embedded submanifold on Euclidean space, these
recover the Ricci curvature as $t\rightarrow0$. We will show that as long as
we sample points adequately from the submanifold $\Sigma$, it is possible to
choose a scale $t_{n}$ depending only on the size of the data set (equal to
$n$) to obtain almost sure convergence to the actual Ricci curvature of the
submanifold at a given point. We will summarize the results in \cite{AW1,AW2}
relevant to the present article in Section \ref{summary} (for example Theorem
\ref{extrinsic bias} and Proposition \ref{tconvergence}).

Our results show that one can give a definition of a sample version of Ricci
curvature at a scale on general metric measure spaces that converges to the
actual Ricci curvature on smooth Riemannian manifolds. Moreover, our
definition of empirical coarse Ricci curvature at a scale can be thought of as
an extension of Ricci curvature to a class of discrete metric spaces, namely
those obtained from sampling points from a smooth closed embedded submanifold
of $\mathbb{R}^{N}$. Note however, that in order to obtain convergence of the
empirical coarse Ricci curvature at a scale to the actual Ricci curvature we
need to assume that there is a manifold which fits the distribution of the
data. Recently, Fefferman-Mitter-Narayanan in \cite{FMN13} have developed an
algorithm for testing the hypothesis that there exists a manifold which fits
the distribution of a sample, however, a problem that remains open is how to {best}
estimate the dimension of a submanifold from a sample of points.  {See section \ref{five3} for more discussion.}

In another vein, there is much current interest in a converse problem : The
development of algorithms for generating point clouds on manifolds or even on
surfaces. Recently, there has been progress in this direction by
Karcher-Palais-Palais in \cite{KPP}, specifically on methods for generating
point clouds on implicit surfaces using Monte Carlo simulation and the
Cauchy-Crofton formula.

\subsection{Computational issues}
A naive dense implementation of our algorithm is undoubtably slow, and becomes unfeasible for large numbers of points.  Indeed, inspecting the formulas in section \ref{estimators}, we see that to compute the value of the coarse Ricci curvature at a point $(x,y)$ requires on order of $n^3$ computations. This reduces to $O(n^2)$ if we are in the presence of  a constant density based on assuming also that one knows the dimension of the underlying manifold . Note that $O(n^2)$ is the order of naive dense computation of an optimal transport problem via linear programming.   Thus computing the full coarse Ricci object should require $O(n^5)$. Of course, even storing the distance matrix itself can become prohibitive for large enough $n$.  Nonetheless, for smaller sets (say of size $n<500$) we were able construct a ``coarse Ricci flow'' based on our construction that was successful in clustering some simple data sets \cite{gh2015}. On the other hand, it appears that improvements can be made with very little loss in accuracy, by using only nearest neighbors or truncating the kernel, obtaining a sparse distance function and sparse coarse Ricci object.  

\subsection{Organization of the paper}

This paper is devoted to proving Theorems \ref{Ricciapprox} and
\ref{extrinsic variance}, which are the ingredients for Theorem \ref{Main1}.
\ In the section \ref{background} we review the relevant background. In
Section \ref{summary} we summarize some of the results in \cite{AW2}. The core
of the paper will be Section \ref{empirical} devoted to the prove of Theorem
\ref{extrinsic variance}. In Section \ref{localPCA} we review the construction
of local PCA in \cite{SW12} and show how can we combine this construction with
Theorem \ref{extrinsic variance} to prove Theorem \ref{Ricciapprox}. We also include a discussion of dimension reduction and estimation.

For the reader's convenience we provide a table of notations at the end of section \ref{background}.

\subsection{Acknowledgements}

The authors would like to thank Amit Singer, Hau-Tieng Wu and Charles
Fefferman for constant encouragement. The first author would like to express
gratitude to Adolfo Quiroz for very useful conversations on the topic of
empirical processes, and to Richard Palais for bringing his work to the
attention of both authors. The second author would like to thank Jan Maas for
useful conversations, and Matthew Kahle for stoking his interest in the topic. { We would also like to thank Bartek Siudeja for some code implementations of the algorithm.} 

\section{Background and Definitions}

\label{background}

In this section we recall (cf. \cite{AW1})  how Ricci curvature on general
metric spaces can be constructed with an operator, in particular the
infinitesimal generator of a diffusion semi-group. When the space is a metric
measure space, we use a family of operators which are intended to approximate
a Laplace operator on the space at scale $t.$ As this definition holds on
metric measure spaces constructed from sampling points from a manifold, we can
define an \emph{empirical or sample version} of the Ricci curvature, given a {bandwidth} parameter $t$. As mentioned above, this last construction will have an application
to the manifold learning problem, namely it will serve to predict the Ricci
curvature of an embedded submanifold of $\mathbb{R}^{N}$ if one only has a
point cloud on the manifold and the distribution of the sample has a smooth
positive density. 

\subsubsection{Carr\'{e} du champ}

{We now recall how Bakry and Emery \cite{BE85} related the notion of
\emph{Carr\'{e} du Champ} to Ricci curvature. }Let $P_{t}$ be a $1$-parameter
family of operators of the form
\begin{align}
P_{t}f(x)=\int_{M}f(y)p_{t}(x,dy),
\end{align}
where $f$ is a bounded measurable function defined on $M$ and $p_{t}(x,dy)$ is
a non-negative kernel. We assume that $P_{t}$ satisfies the semi-group
property, i.e.
\begin{align}
P_{t+s}  &  =P_{t}\circ P_{s}.\\
P_{0}  &  =\mathrm{Id}.
\end{align}
In $\mathbb{R}^{n}$, an example of $P_{t}$ is the \emph{Brownian motion},
defined by the density
\begin{align}
p_{t}(x,dy)=\frac{1}{(2\pi t)^{n/2}}e^{-\frac{|x-y|^{2}}{2t}}dy, t\ge0.
\end{align}
If now $P_{t}$ is a diffusion semi-group defined on $(M,g)$, we let $L$ be the
infinitesimal generator of $P_{t},$ which is densely defined in $L^{2}$ by%

\begin{align}
Lf=\lim_{t\rightarrow0}t^{-1}(P_{t}f-f).
\end{align}

We consider a bilinear form which has been introduced in potential theory by
J.P. Roth \cite{Roth74} and by Kunita in probability theory \cite{Kunita69}
and measures the failure of $L$ from satisfying the Leibnitz rule. This
bilinear form is defined as
\begin{equation}
\Gamma(L,u,v)=\frac{1}{2}\left(  L(uv)-L(u)v-uL(v)\right)  . \label{cdc111}%
\end{equation}

When $L$ is the rough Laplacian with respect to the metric $g$, then
\[
\Gamma(\Delta_{g},u,v)=\langle\nabla u,\nabla v\rangle_{g}.
\]
We will also consider the \emph{iterated Carr\'{e} du Champ} introduced by
Bakry and Emery denoted by $\Gamma_{2}$ and defined by
\begin{equation}
\Gamma_{2}(L,u,v)=\frac{1}{2}\left(  L(\Gamma(L,u,v))-\Gamma(L,Lu,v)-\Gamma
(L,u,Lv)\right)  . \label{cdci}%
\end{equation}

Note that if we restrict our attention to the case $L=\Delta_{g}$ the Bochner
formula yields
\begin{align}
\Gamma_{2}(\Delta_{g},u,v)  &  =\frac{1}{2}\Delta\langle\nabla u,\nabla
v\rangle_{g}-\frac{1}{2}\langle\nabla\Delta_{g}u,\nabla v\rangle_{g}-\frac
{1}{2}\langle\nabla u,\nabla\Delta_{g}v\rangle_{g}\nonumber\\
&  =\mathrm{Ric}(\nabla u,\nabla v)+\langle\nabla^{2}_{g}u,\nabla^{2}%
_{g}v\rangle_{g}. \label{bochner}%
\end{align}

We observe immediately that if $\nabla u = e_{i} $ and $\nabla^{2}_{g}u=0$ one
can recover the Ricci tensor via
\begin{align}
\label{recoverRicci}\Gamma_{2}(\Delta_{g},u,u)  &  =\mathrm{Ric}(e_{i},e_{i}).
\end{align}

A geometric interpretation for the Carr\'{e} du Champ in
\eqref{cdc111} and its iterate \eqref{cdci} is given by their role in
formulating the so-called \emph{$CD(K,N)$ curvature condition} due to Bakry
and Emery. The fundamental observation of Bakry and Emery is that the
properties of Ricci curvature lower bounds can be observed and exploited by
using the bilinear form $\Gamma_{2}.$ With this in mind, they define a
curvature-dimension condition for an operator $L$ on a smooth metric measure
space $(X,g,d\nu)$ as follows. If there exist measurable functions
$k:X\rightarrow\mathbb{R}$ and $N:X\rightarrow[1,\infty]$ such that
for every $f$ on a set of functions dense in $L^{2}(X,d\nu)$ the inequality
\begin{align}
\label{cdkn}\Gamma_{2}(L,f,f)\ge\frac{1}{N}(Lf)^{2}+k\Gamma(L,f,f)
\end{align}
holds, then the space $X$ together with the operator $L$ satisfies the
$CD(k,N)$ \emph{condition}, where $k$ stands for curvature and $N$ for
dimension. In \cite{BE85}, it is shown that when considering a smooth metric
measure space $(M^{n},g,e^{-\rho}d\mathrm{vol})$ one has a natural diffusion
operator given by %

\begin{align}
\Delta_{\rho} u=\Delta u-\langle\nabla\rho,\nabla u\rangle,
\end{align}
corresponding to the variation of the Dirichlet energy with respect to the
measure $e^{-\rho}d\mathrm{vol}$. By studying the properties of $\Delta_{\rho
}$, Bakry and Emery arrive at the following dimension and weight dependent
definition of the Ricci tensor:%

\begin{align}
\mathrm{Ric}_{N} & =\left\{
\begin{array}
[c]{ll}%
\mathrm{Ric}+\mathrm{Hess}_{\rho} & \text{if}~N=\infty,\\
\mathrm{Ric}+\mathrm{Hess}_{\rho}-\frac{1}{N-n}(d\rho\otimes d\rho) &
\text{if}~n<N<\infty,\\
\mathrm{Ric}+\mathrm{Hess}_{\rho}-\infty(d\rho\otimes d\rho) & \text{if}%
~N=n,\\
-\infty & \text{if}~N<n,
\end{array}
\right.
\end{align}
and moreover, they showed the equivalence between the $CD(k,N)$ condition
\eqref{cdkn} and the bound $\mathrm{Ric}_{N}\ge k$. The $CD(k,N)$ condition
that we stated above is also related to the \emph{displacement convexity}
condition used by Lott and Villani in \cite{LV09} to study the stability of
lower bounds on Ricci curvature under limits in the Gromov-Hausdorff sense.
For the problem of estimating Ricci curvature from a point cloud, the Bochner
formula \eqref{bochner} and \eqref{recoverRicci} give a direct connection
between the Carr\'{e} du Champ, its iterate and Ricci curvature. 


\subsubsection{Approximations of the Laplacian, Carr\'{e} du Champ and its
iterate.}

Following \cite{BN08} and \cite{CoLaf06}, we recall how to construct operators
which can be thought of as approximations of the Laplacian on metric measure
spaces. Consider a metric measure space $(X,d,\mu)$ with a Borel $\sigma
$-algebra such that $\mu(X)<\infty$. Given $t>0$, let $\theta_{t}$ be given
by
\begin{equation}
\theta_{t}(x)=\int_{X}e^{-\frac{d^{2}(x,y)}{2t}}d\mu(y).
\label{generaldensity}%
\end{equation}
We define a 1-parameter family of operators $L_{t}$ as follows: given a
function $f$ on $X$ let
\begin{equation}
L_{t}f(x)=\frac{2}{t\theta_{t}(x)}\int_{X}\left(  f(y)-f(x)\right)
e^{-\frac{d^{2}(x,y)}{2t}}d\mu(y). \label{Ltdefine}%
\end{equation}
With respect to this $L_{t}$ one can define a Carr\'{e} du Champ on
appropriately integrable functions $f,h$ by
\begin{equation}
\Gamma(L_{t},f,h)=\frac{1}{2}\left(  L_{t}(fh)-(L_{t}f)h-f(L_{t}h)\right)  ,
\label{cdctdef}%
\end{equation}
which simplifies to
\begin{equation}
\Gamma(L_{t},f,h)(x)=\frac{1}{t\theta_{t}(x)}\int_{X}e^{-\frac{d^{2}(x,y)}%
{2t}}(f(y)-f(x))(h(y)-h(x))d\mu(y). \label{cdctsimp}%
\end{equation}
In a similar fashion we define the iterated Carr\'{e} du Champ of $L_{t}$ to
be
\begin{equation}
\Gamma_{2}(L_{t},f,h)=\frac{1}{2}\left(  L_{t}(\Gamma(L_{t},f,h))-\Gamma
(L_{t},L_{t}f,h)-\Gamma(L_{t},f,L_{t}h)\right)  . \label{cdc2t}%
\end{equation}

\begin{remark}\em{
Note that Belking and Niyogi \cite[pg 1295, eq (6)]{BN08} normalize by a
factor $(4\pi t)^{d/2}$, which requires knowledge of the dimension. Our
definition of $L_{t}$ \ (\ref{Ltdefine}) differs from Belkin-Niyogi operator
in that we normalize by $\theta_{t}(x)$ instead. This has a cost in that
convergence may be slower, but has the advantage of being dimensionless,
{allowing our general discussion to fit into the framework of spaces with
lower Ricci curvature bound, for example, the disjoint union of two manifolds
of different dimensions or a sequence of manifolds which may be collapsing. }}
\end{remark}

\subsubsection{Empirical Carr\'{e} du Champ at a given scale}

We can also define empirical versions of $L_{t},\Gamma(L_{t},\cdot,\cdot)$ and
$\Gamma_{2}(L_{t},\cdot,\cdot)$. On a space which consists of $n$ points
$\left\{  \xi_{1},...,\xi_{n}\right\}  $ sampled from a manifold, it is
natural to consider the \emph{empirical measure} defined by
\begin{equation}
\mu_{n}=\frac{1}{n}\sum_{i=1}^{n}\delta_{\xi_{i}} \label{mudefi}%
\end{equation}
where $\delta_{\xi_{i}}$ is the atomic point measure at the point $\xi_{i}$
(also called $\delta$-mass). For any function $f:X\rightarrow\mathbb{R}$ we
will use the notation
\[
\mu_{n}f=\int_{X}f(y)d\mu_{n}(y)=\frac{1}{n}\sum_{j=1}^{n}f(\xi_{j}).
\]

\begin{notation}
\emph{{ We will use the ``hat" notation (for example $\hat{L}_{t}$) to
distinguish those operators, measures, or $t$-densities that have been
constructed from a sample of finite points.} }
\end{notation}

To be more precise, we define the operator $\hat{L}_{t}$ as
\begin{equation}
\hat{L}_{t}f(x)=\frac{2}{t\hat{\theta}_{t}(x)}\int_{X}\left(
f(y)-f(x)\right)  e^{-\frac{d^{2}(x,y)}{2t}}d\mu_{n}(y),
\label{sampledoperator}%
\end{equation}
where
\begin{align}
\hat{\theta}_{t}(x)=\int_{X}e^{-\frac{d^{2}(x,y)}{2t}}d\mu_{n}(y)=\frac{1}%
{n}\sum_{j=1}^{n}e^{-\frac{d(\xi_{j},x)^{2}}{2t}},
\end{align}
and of course
\begin{align}
\int_{X}\left(  f(y)-f(x)\right)  e^{-\frac{d^{2}(x,y)}{2t}}d\mu_{n}%
(y)=\frac{1}{n}\sum_{j=1}^{n}e^{-\frac{d(\xi_{j},x)^{2}}{2t}}(f(\xi
_{j})-f(x)).
\end{align}


The \emph{sample version of Carr\'{e} du Champ} will be the bilinear form
$\Gamma(\hat{L}_{t},f,h)$ which from \eqref{cdctsimp} takes the form
\begin{equation}
\Gamma(\hat{L}_{t},f,h)(x)=\frac{1}{t\hat{\theta}_{t}(x)}\frac{1}{n}\sum
_{j=1}^{n}e^{-\frac{d^{2}(\xi_{j},x)}{2t}}(f(\xi_{j})-f(x))(h(\xi_{j})-h(x)).
\label{empcdc1}%
\end{equation}
We denote the iterated Carr\'{e} du Champ corresponding to $\hat{L}_{t}$ by
$\Gamma_{2}(\hat{L}_{t},f,h)$, and by this we mean
\begin{equation}
\Gamma_{2}(\hat{L}_{t},f,h)=\frac{1}{2}\left(  \hat{L}_{t}(\Gamma(\hat{L}%
_{t}f,h))-\Gamma(\hat{L}_{t},\hat{L}_{t}f,h)-\Gamma(\hat{L}_{t},f,\hat{L}%
_{t}h)\right)  . \label{gamma2general}%
\end{equation}

\subsection{Statement of Results}

\subsubsection{Applications to Manifold Learning}
\label{appML}

We now show how our notion of empirical Carr\'{e} du Champ at a given scale
has applications to the Manifold Learning Problem. For the rest of subsection
\ref{appML} we will consider a closed, smooth, embedded submanifold $\Sigma$
of $\mathbb{R}^{N}$, and the metric measure space will be $(\Sigma
,\|\cdot\|,d\mathrm{vol})$, where

\begin{itemize}
\item $\|\cdot\|$ is the distance function in the ambient space $\mathbb{R}%
^{N}$,

\item $d\mathrm{vol}_{\Sigma}$ is the volume element corresponding to the
metric $g$ induced by the embedding of $\Sigma$ into $\mathbb{R}^{N}$.
\end{itemize}

In addition we will adopt the following conventions

\begin{itemize}
\item All operators $L_{t}$, $\Gamma(L_{t},\cdot,\cdot)$ and $\Gamma_{2}%
(L_{t},\cdot,\cdot)$ will be taken with respect to the distance $\|\cdot\|$
and the measure $d\mathrm{vol}_{\Sigma}$.

\item All sample versions $\hat{L}_{t}$, $\Gamma(\hat{L}_{t},\cdot,\cdot)$ and
$\Gamma_{2}(\hat{L}_{t},\cdot,\cdot)$ are taken with respect to the ambient
distance $\|\cdot\|$.
\end{itemize}

The choice of the above metric measure space is consistent with the setting of
manifold learning in which no assumption on the geometry of the submanifold
$\Sigma$ is made, in particular, we have no a priori knowledge of the geodesic
distance and therefore we can only hope to use the chordal distance as a
reasonable approximation for the geodesic distance. We will show that while
our construction at a scale $t$ involves only information from the ambient
space, the limit as $t$ tends to $0$ will recover the Ricci curvature of the
submanifold. As pointed out by Belkin-Niyogi \cite[Lemma 4.3]{BN08}, the
chordal and intrinsic distance squared functions on a smooth submanifold
disagree first at fourth order near a point , so while much of the analysis is
done on submanifolds, the intrinsic geometry will be recovered in the
limit.

We now address the problem of choosing a bandwidth parameter depending on the size of the
data and the dimension of the submanifold $\Sigma,$ such that the sequence of
empirical Ricci curvatures corresponding to the size of the data converge
almost surely to the actual Ricci curvature of $\Sigma$ at a point. In order
to simplify the presentation of our results, we start by stating the simplest
possible case, which corresponds to a uniformly distributed i.i.d.sample
$\{\xi_{1},\ldots,\xi_{n}\}.$ The more general case of distributions with
strictly positive density with respect to the Lebesgue measure can be explored using the same methods presented here, but the proof is quite lengthy.


\begin{thmx}
[Approximation of the Ricci Curvature]\label{Ricciapprox}Consider the metric
measure space $(\Sigma,\Vert\cdot\Vert,d\mathrm{vol}_{\Sigma})$ where
$\Sigma^{d}\subset\mathbb{R}^{N}$ is a smooth closed embedded submanifold.
Suppose that we have a uniformly distributed i.i.d. sample $\{\xi_{1}%
,\ldots,\xi_{n}\}$ of points from $\Sigma.$ For $\sigma>0,$ let
\begin{equation}
t_{n}=n^{-\frac{1}{3d+3+\sigma}}. \label{choicetns2}%
\end{equation}
{ For $x\in\Sigma$ there exists a {sequence of $d$-tuples of orthogonal
vectors, and $d \times d $ Ricci matrices $\hat{R}_{i,j}$ representing the
Ricci curvature on these vectors, such at if $\eta\in T_{x}\Sigma$, then
\begin{align}
\left\vert \hat{R}_{i,j}\eta^{i}\eta^{j}-\mathrm{Ric}_{x}(\eta,\eta
)\right\vert \overset{\mathrm{a.s}}{\longrightarrow} 0,
\end{align}
}where $\eta^{i}$ are the components of the vector $\eta$ projected onto the
$d$-tuple of vectors approximating the tangent plane. }
\end{thmx}

{See Section \ref{localPCA} for more on the PCA construction. Even though the
approximation method used to obtain Theorem \ref{Ricciapprox} is inspired by
the notion of \emph{Coarse Ricci} curvature introduced by the authors in
\cite{AW1,AW2}, Theorem \ref{Ricciapprox} relies heavily on a precise
estimation of the tangent space at a point by means of local PCA, as opposed
to the approximation proposed in \cite{AW1,AW2} based on the construction of
an \textquotedblleft auxiliary tangent space" by taking segments within the
point cloud. 
\begin{remark}
\emph{{ The convergence is more certain (but perhaps slower)\ if $t_{n}$ is
chosen to go to zero slower than in (\ref{choicetns2}), { i.e.,
slower than
\[
t_{n}=n^{-\frac{1}{3d+3+\sigma+\tau}},
\]
where }}}$\tau$\emph{{{ is any positive number}. In particular, if one
replaces $d$ with an upper bound on $d$, then Theorems \ref{Ricciapprox}, \ref{extrinsic variance},
and Corollary \ref{corunif} still hold.} {
We also remark that we do not attempt compute the rate which one minimizes the mean-squared error of the estimation - that is we do not attempt to compute or justify a version of \textit{Silverman's rule of thumb} (see \cite{silver86}). It could be that there is bandwidth parameter which gives a faster but less sure (i.e. with lower probability) convergence rate.} }
\end{remark}

Besides local PCA, the proof of Theorem \ref{Ricciapprox} relies heavily on
the following theorem for the iterated Carr\'{e} du Champ: Theorem
\ref{Ricciapprox} in turn follows from an approximation result for the
iterated Carr\'{e} du Champ:

\begin{thmx}
\label{extrinsic variance} Consider the metric measure space $(\Sigma
,\Vert\cdot\Vert,d\mathrm{vol}_{\Sigma})$ where $\Sigma^{d}\subset
\mathbb{R}^{N}$ is a smooth closed embedded submanifold. { For $\sigma>0,$
let
\begin{equation}
t_{n}=n^{-\frac{1}{3d+3+\sigma}}.
\end{equation}
}

\begin{enumerate}
\item[(a)] If $f\in C^{\infty}(\mathbb{R}^{N})$, then
\begin{align*}
\sup_{\xi\in\Sigma}\left\vert \hat{\Gamma}_{2}(\hat{L}_{t_{n}},f,f)(\xi
)-\Gamma_{2}(L_{t_{n}},f,f)(\xi)\right\vert \overset{\text{a.s.}%
}{\longrightarrow}0.
\end{align*}

\item[(b)] If $\mathscr{L}_{M}$ is the class of linear functions
$\mathscr{L}_{M}=\{f(\xi)=\langle\zeta,\xi\rangle:\|\zeta\|\le M\}$ where
$\|\cdot\|$ is the ambient distance in $\mathbb{R}^{N}$, then
\begin{align*}
\sup_{f\in\mathscr{L}_{M}}\sup_{\xi\in\Sigma}\left\vert \hat{\Gamma}_{2}%
(\hat{L}_{t_{n}},f,f)(\xi)-\Gamma_{2}(L_{t_{n}},f,f)(\xi)\right\vert
\overset{\text{a.s.}}{\longrightarrow}0.
\end{align*}

\end{enumerate}
\end{thmx}

The proof of Theorem \ref{extrinsic variance} requires using ideas from the
theory of \emph{empirical processes} for which we will provide the necessary
background in Section \ref{empirical}. As pointed out earlier in this
introduction, since we are interested in recovering an object from its sample
version, we are forced to consider a law of large numbers in order to obtain
convergence in probability or almost surely. The problem is that the sample
version of $\Gamma_{2}(L_{t},\cdot,\cdot)$ involves a high correlation between
the data points, destroying independence and any hope of applying large number
results directly. The idea then is to reduce the convergence of the sample
version of $\Gamma_{2}(L_{t},\cdot,\cdot)$ to the application of a uniform law
of large numbers to certain classes of functions. Theorem
\ref{extrinsic variance} is proved in Section \ref{empirical}. In section
\ref{localPCA} we will prove that Theorem \ref{extrinsic variance} indeed
implies Theorem \ref{Ricciapprox}. This will require results from \cite{AW2}.


\subsubsection{Smooth Metric Measure Spaces and non-Uniformly Distributed
Samples}

Consider a smooth metric measure space $\left(  M,g,e^{-\rho}d\mathrm{vol}%
\right)  $ and let $\Delta_{\rho}$ be the operator
\[
\triangle_{\rho}u=\triangle_{g}u-\langle\nabla\rho,\nabla u\rangle_{g}.
\]
In \cite{CoLaf06}, the authors consider a family of operators $L_{t}^{\alpha}$
which converge to $\triangle_{2(1-\alpha)\rho}.$ Note that a standard
computation (cf \cite[Page 384]{vill09}) gives
\[
\Gamma_{2}(\triangle_{2(1-\alpha)\rho},f,f)=\frac{1}{2}\Delta_{g}\left\Vert
\nabla f\right\Vert _{g}^{2}-\langle\nabla\rho,\nabla\Delta_{g}f\rangle
_{g}+2(1-\alpha)\nabla_{g}^{2}\rho(\nabla f,\nabla f).
\]
We adapt \cite{CoLaf06} to our setting: Recall that
\[
\theta_{t}(x)=\int_{X}e^{-\frac{d^{2}(x,y)}{2t}}d\mu(y),
\]
and define, for $\alpha\in\lbrack0,1]$
\begin{equation}
\theta_{t,\alpha}(x)=\int_{X}e^{-\frac{d^{2}(x,y)}{2t}}\frac{1}{\left[
\theta_{t}(y)\right]  ^{\alpha}}d\mu(y). \label{thetaalpha}%
\end{equation}
We can define the operator
\begin{equation}
L_{t}^{\alpha}f(x)=\frac{2}{t}\frac{1}{\theta_{t,\alpha}(x)}\int_{X}
e^{-\frac{d^{2}(x,y)}{2t}}\frac{1}{\left[  \theta_{t}(y)\right]  ^{\alpha}%
}\left(  f(y)-f(x)\right)  d\mu(y), \label{defLtalpha}%
\end{equation}
and again obtain bilinear forms $\Gamma(L_{t}^{\alpha},f,f)$ and $\Gamma
_{2}(L_{\alpha}^{t},f,f)$. For the rest of the section we will consider the
metric measure space $(\Sigma,\Vert\cdot\Vert,e^{-\rho}d\mathrm{vol}_{\Sigma
})$ where $\Sigma^{d}\subset\mathbb{R}^{N}$ is an embedded submanifold,
$\Vert\cdot\Vert$ is the ambient distance and $\rho$ is a smooth function in
$\Sigma$. We again take all the operators $L^{\alpha}_{t},\Gamma_{t}%
(L^{\alpha}_{t},\cdot,\cdot)$ and $\Gamma_{2}(L^{\alpha}_{t},\cdot,\cdot)$ and
their sample counterparts $\hat{L}^{\alpha}_{t},\Gamma_{t}(\hat{L}^{\alpha
}_{t},\cdot,\cdot)$ and $\Gamma_{2}(\hat{L}^{\alpha}_{t},\cdot,\cdot)$ with
respect to the data of $(\Sigma,\Vert\cdot\Vert,e^{-\rho}d\mathrm{vol}%
_{\Sigma})$.

Based on estimates in \cite{AW1} and calculations similar to the proof of
Theorem \ref{extrinsic variance}, we can prove
\begin{thmx}
[Non-uniform case]\label{nonuniform} Consider the metric space $(\Sigma
,\Vert\cdot\Vert)$ where $\Sigma^{d}\subset\mathbb{R}^{N}$ is a smooth closed
embedded submanifold. Suppose that we have an i.i.d. sample $\{\xi_{1}%
,\ldots,\xi_{n}\}$ of points from $\Sigma$ whose common distribution has
density $e^{-\rho}$. For $\sigma>0,$ let
\[
t_{n}=n^{-\frac{1}{4d+4+\sigma}}.
\]
Then, for any $x\in\Sigma$ and any $\eta\in T_{x}\Sigma$ there exists a
sequence of functions $f_{n}$ constructed from the data such that
\[
\left\vert \hat{\Gamma}_{2}^{\alpha}(\hat{L}_{t_{n}}^{\alpha},f_{n},f_{n}%
)(\xi)-\mathrm{Ric}_{x,\alpha}(\eta,\eta)\right\vert \overset{\text{a.s.}%
}{\longrightarrow}0,
\]
and where
\begin{equation}
\mathrm{Ric}_{\alpha}=\mathrm{Ric}+2(1-\alpha)\nabla_{g}^{2}\rho.
\label{wrwpa}%
\end{equation}
\end{thmx}

We omit the proof of Theorem \ref{nonuniform}. \ Heuristically, the order of
decay of $t_{n}$ is not hard to determine, but the proof is exceeding long and tedious.


\subsection{Notation and table of definitions}

Below we give a table of object that frequently appear.\bigskip
\bigskip
\newpage
\captionof{table}{Summary of Notations used in the paper}
\begin{tabular}
[c]{|p{3cm}|p{8cm}|p{3cm}|}\hline
Notation & Description & Where Found\\\hline\hline
$\Sigma^{d}$ & Embedded submanifold of $\mathbb{R}^{N}$ & \\
$N,d$ & Dimensions of ambient space and submanifold & \\
$n$ & Number of sample points & \\
$t$ & Bandwidth parameter & \\
$\xi_{i}$ & point sampled from $\Sigma^{d}$ & \\
$\Gamma(L,u,v)$ & First Carr\'{e} du champ & (\ref{cdc111})\\
$\Gamma_{2}(L,u,v)$ & Iterated Carr\'{e} du champ & (\ref{cdci})\\
$L_{t}$ & Finite scaled approximate Laplacian & (\ref{Ltdefine})\\
$\Delta_{g}$ & Laplace-Beltrami operator on a manifold & \\
$\theta_{t}(x)$ & Scale-$t$ density at $x$ & (\ref{generaldensity})\\
$\mu_{n}$ & measure determined by sampling $n$-points & (\ref{mudefi})\\
$\hat{L}_{t}$ & Empirical Laplace at scale $t$ & (\ref{sampledoperator})\\
$\Gamma(\hat{L}_{t},f,h)$ & Sampled Carr\'{e} du Champ & (\ref{empcdc1})\\
$\Gamma_{2}(\hat{L}_{t},f,h)$ & Sampled iterated Carr\'{e} du Champ &
(\ref{gamma2general})\\
$\Vert\cdot\Vert$ & distance function on $\mathbb{R}^{N}$ & \\
$t_{n}$ & Scale parametric chosen based on $n$ & (\ref{choicetn})\\
$F_{x,y}(z)$ & Approximate signed distance function from $x$ to $y$ &
(\ref{defBIGF})\\
$f_{x,y}(z)$ & Scaled approximate signed distance function & (\ref{deflf})\\
$\mu$ & Probability distribution on $\Sigma^{d}$ & \\
$\mathcal{N}(\mathcal{F},\delta)$ & Covering number of a set of functions, for
radius $\delta$ & (\ref{conumb})\\
$\Vert f\Vert_{\text{A-t-}\mathrm{Lip}}$ & Almost $t$ Lipschitz norm &
(\ref{atlip})\\
$\mathcal{F}_{f,h}^{t},\mathcal{G}^{t},\mathcal{H}_{h}^{t}$ & Families of
functions & (\ref{Ft})(\ref{Gt}), Lemma \ref{classesFtHt}\\
$\tau$ & The reach of $\Sigma$ & Definition \ref{reachdef}\\
$\mathcal{A}(X,\delta)$ & Ambient covering number of a set $X\subset
\mathbb{R}^{N}$ & (\ref{amconu})\\
$U_{V,\tau}(r)$ & Covering bound function & Theorem \ref{coveringbound}\\
$Q_{t}(\mathcal{F},\varepsilon,C,n)$ &  & (\ref{Qdef})\\
$\mathcal{Q}(t,\varepsilon,n)$ &  & (\ref{qbcbeta})\\
$O_{BC}(\beta)$ &  & (\ref{obcbeta})\\
\emph{$C^{\ast}(\Sigma)$} & Constants depending on $\Sigma$ & Convention
\ref{Cconvention}\\\hline
\end{tabular}
\newline

\pagebreak

We also give a table for the different Ricci curvature operators

\bigskip

\captionof{table}{Notation used for different Ricci Curvature operators}
\begin{tabular}
[c]{|p{3cm}|p{8cm}|p{3cm}|}\hline
Notation & Extended Description & Where Found\\\hline\hline
$\mathrm{Ric}_{L}(x,y)$ & Coarse Ricci operator determined by operator $L$ and
evaluated at $x$ and $y.$ This is roughly quadratic in $d(x,y).$ & Definition
\ref{ricl}\\
$\mathrm{RIC}_{L}(x,y)$ & Life-sized Ricci operator determined by operator $L$
and evaluated at $x$ and $y.$ This need not vanish as $x\rightarrow y.$ &
Definition \ref{lricl}\\
$\mathrm{Ric}_{\Delta_{g}}(x,y)$ & Coarse Ricci operator determined by Laplace
operator $\Delta_{g}$ and evaluated at $x$ and $y.$ This is computable when
$\Delta_{g}$ is available. & Definition \ref{ricl}, applied to $\Delta_{g}$\\
{{$\hat{R}_{i,j}$}} & Ricci matrices defined w.r.t PCA\ basis for $T_{x}%
\Sigma^{d}$ & (\ref{pcaricci})\\
$\mathrm{Ric}_{x}(\cdot,\cdot)$ & Classical Ricci 2-form on $T_{x}\Sigma^{d}$
& \\
$\mathrm{Ric}_{\alpha}$ & Weighted Ricci curvature with parameter $\alpha$ and
density $e^{-\rho}$ & (\ref{wrwpa})\\\hline
\end{tabular}

\section{Summary of previous results. Proof of Theorem \ref{Ricciapprox}}

\label{summary}

We now recall the following result proved in \cite{AW2}:

\begin{theorem}
[See \cite{AW2}]
\label{extrinsic bias} Let $\Sigma^{d}\subset\mathbb{R}^{N}$ be a closed
embedded submanifold, let $g$ be the Riemannian metric induced by the
embedding, and let $(\Sigma,\Vert\cdot\Vert,d\mathrm{vol}_{\Sigma})$ be the
metric measure space defined with respect to the ambient distance. { Given any
$f\in\mathscr{C}_{M}\subset C^{5}(\Sigma)$ where $\mathscr{C}_{M}$ is the
class of functions
\begin{align*}
\mathscr{C}_{M}=\left\{  f:\|f\|_{C^{5}(\Sigma)}\le M\right\}  ,
\end{align*}
there exists a constant $C_{1}$ depending on the geometry of $\Sigma$ and the
function $f$ such that
\[
\sup_{x\in\Sigma}\left\vert \Gamma_{2}(\Delta_{g},f,f)(x)-\Gamma_{2}%
(L_{t},f,f)(x)\right\vert <C_{1}(\Sigma,M)t^{1/2}.
\]
}
\end{theorem}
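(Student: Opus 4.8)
The plan is to reduce the statement to a local, coordinate-level Taylor expansion of the operator $L_t$ near the diagonal, and then track how the error propagates through the three applications of $L_t$ (and $\Gamma$) that make up $\Gamma_2$. First I would recall the basic pointwise asymptotic expansion of $L_t$ on a closed submanifold $\Sigma^d\subset\R^N$ with the \emph{ambient} distance $\|\cdot\|$: for $f\in C^k(\Sigma)$ one has $L_t f(x) = \Delta_g f(x) + t\,A_1[f](x) + \dots$, where the correction terms $A_j[f]$ are differential operators in $f$ whose coefficients depend on the second fundamental form and intrinsic curvature of $\Sigma$ (this is the point where the chordal-vs-intrinsic discrepancy enters, but since the two distance-squared functions agree to third order near the diagonal — as quoted from \cite[Lemma 4.3]{BN08} — the $\Delta_g$ leading term is unaffected, and the embedding only contaminates the $O(t)$ and higher terms). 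The key quantitative input I need is not just the $O(t)$ remainder but an estimate of the form
\[
\sup_{x\in\Sigma}\left| L_t f(x) - \Delta_g f(x)\right| \le C(\Sigma, D^{\,?}f)\, t,
\]
together with \emph{locally uniform} control: I actually need the expansion to hold in a way that survives differentiation, because $\Gamma_2$ involves $L_t$ applied to quantities like $\Gamma(L_t,f,f)$ which themselves only enjoy the expansion up to some lower-order error. This is why the hypothesis is $f\in C^5$: each application of $L_t$ or $\Gamma$ costs two derivatives, and we apply the machinery roughly twice.

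Next I would handle $\Gamma(L_t,f,f)$. From the simplified formula \eqref{cdctsimp}, $\Gamma(L_t,f,f)(x) = \frac{1}{t\theta_t(x)}\int_\Sigma e^{-\|x-y\|^2/2t}(f(y)-f(x))^2\,d\mathrm{vol}(y)$, and a Laplace-type expansion in normal coordinates at $x$ gives $\Gamma(L_t,f,f)(x) = |\nabla f(x)|_g^2 + t\, B_1[f](x) + O(t^2)$, with the $O(t^2)$ controlled by $\|f\|_{C^4}$ and the geometry (again the chordal distance only affects the subleading term). So $\Gamma(L_t,f,f) = \Gamma(\Delta_g,f,f) + O(t)$ uniformly. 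Then I would write
\[
\Gamma_2(L_t,f,f) = \tfrac12\Big(L_t\big(\Gamma(L_t,f,f)\big) - 2\,\Gamma(L_t,L_t f,f)\Big)
\]
and substitute: $L_t$ applied to $\Gamma(L_t,f,f) = \Gamma(\Delta_g,f,f)+O_{C^{2}}(t)$ yields $\Delta_g\big(\Gamma(\Delta_g,f,f)\big) + O(t)$, provided the $O(t)$ error in $\Gamma(L_t,f,f)$ is controlled in a strong enough norm ($C^2$ in $x$) for $L_t$ to act on it with only an $O(1)$ loss — this is the crux. Similarly $\Gamma(L_t, L_t f, f) = \Gamma(\Delta_g, \Delta_g f, f) + O(t)$, since $L_t f = \Delta_g f + O_{C^?}(t)$ and $\Gamma(L_t,\cdot,\cdot)$ depends continuously (with the right losses) on its entries. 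Assembling, $\Gamma_2(L_t,f,f) = \Gamma_2(\Delta_g,f,f) + O(t)$, which is even stronger than the claimed $t^{1/2}$.

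The main obstacle — and presumably the reason the stated rate is only $t^{1/2}$ rather than $t$ — is precisely the step where one feeds an error term back into $L_t$ or $\Gamma(L_t,\cdot,\cdot)$. The operator $\frac{2}{t\theta_t(x)}\int (g(y)-g(x))e^{-\|x-y\|^2/2t}d\mathrm{vol}(y)$ has a $\tfrac1t$ prefactor, so an error of size $\varepsilon$ in $g$ that is \emph{not} smooth at scale $\sqrt t$ — e.g. an error that is only bounded pointwise but oscillates — gets amplified. One must therefore verify that the remainder terms in the expansions of $L_t f$ and $\Gamma(L_t,f,f)$ are themselves $C^2$-bounded (not merely $C^0$), uniformly in $x$ and $t$; the cleanest route is to prove a single master expansion $L_t h = \Delta_g h + t\, R_t[h]$ where $\|R_t[h]\|_{C^0(\Sigma)} \le C(\Sigma)\|h\|_{C^4(\Sigma)}$ with constants uniform in $t\in(0,1]$, and to check that the geometric remainder picked up from using the chordal distance instead of the geodesic one contributes, after the $\tfrac1t$ scaling, a term of the advertised size $t^{1/2}$ (heuristically: the fourth-order discrepancy $d_{\mathbb R^N}^2 - d_g^2 = O(d^4)$, integrated against the Gaussian at scale $t$, produces $O(t)$ after the expansion, but the non-smooth/anisotropic part of this discrepancy, once run through a second $L_t$, degrades to $O(t^{1/2})$). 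Carrying this bookkeeping carefully, with all constants expressed in terms of $D^5 f$ and curvature bounds of $\Sigma$, finishes the proof.
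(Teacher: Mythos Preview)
This theorem is not proved in the present paper; it is quoted from the companion paper \cite{AW2}, as is the more detailed Proposition~\ref{expansion bias}, which records the intermediate expansions that drive the argument. Your overall strategy---expand $\theta_t$, $L_t f$, and $\Gamma(L_t,f,h)$ locally via a Laplace-type asymptotic in normal coordinates, then assemble these into $\Gamma_2$---is precisely the scheme encoded in Proposition~\ref{expansion bias}, so at the level of architecture your proposal matches the (cited) approach.

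Where your write-up diverges is in the bookkeeping of the exponent. You begin by positing integer-power expansions ($L_t f = \Delta_g f + O(t)$ and $\Gamma(L_t,f,f) = |\nabla f|^2 + O(t)$), arrive at $\Gamma_2(L_t,f,f) = \Gamma_2(\Delta_g,f,f) + O(t)$, and then speculate that the degradation to $O(t^{1/2})$ must come from feeding a merely $C^0$-bounded remainder back through the $\tfrac1t$-scaled operator, or from the chordal/geodesic discrepancy after a second pass through $L_t$. But Proposition~\ref{expansion bias} asserts that half-integer terms are already present in the \emph{single} applications:
\[
L_t f = \Delta_g f + t^{1/2} G_4 + t\,G_5 + t^{3/2}R_3,\qquad
\Gamma(L_t,f,h) = \langle\nabla f,\nabla h\rangle + t^{1/2}G_2 + t\,G_3 + t^{3/2}R_2,
\]
with the $G_i$ smooth in their arguments and the $R_i$ uniformly bounded in terms of the indicated jets. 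The $t^{1/2}$ rate in the theorem is thus inherited directly from these first-level expansions, not (or not only) from the composition mechanism you describe. Your remainder-propagation concern is legitimate as a technical point---one does need the intermediate errors to be controlled in a norm strong enough to survive another $L_t$, and this is exactly why Proposition~\ref{expansion bias} tracks jets $J^k f$ rather than just $C^0$ bounds---but it is not the correct explanation for the exponent $1/2$. To align with the paper's claim you should instead establish (or import from \cite{AW2}) the half-power expansions above and then check that the $t^{1/2}$ contributions do not cancel when combined into $\Gamma_2$.
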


A fundamental step for proving Theorem \ref{extrinsic bias} is the following
proposition shown in \cite{AW2}: \ For simplicity we will assume that
$(\Sigma,d\mathrm{vol}_{\Sigma})$ has unit volume. Recall the definitions
(\ref{generaldensity}), (\ref{Ltdefine}), \eqref{cdctdef}, \eqref{cdctsimp}
and \eqref{cdc2t}.


\begin{proposition}
[See \cite{AW2}]\label{tconvergence} \label{expansion bias} Suppose that
$\Sigma^{d}$ is a closed, embedded, unit volume submanifold of $\mathbb{R}%
^{N}$. Let also $g$ be the metric induced by the embedding of $\Sigma^{d}$
into $\mathbb{R}^{N}$. For any $x$ in $\Sigma$ and for any functions $f,h$ in
$C^{5}(\Sigma)$ we have
\begin{align}
\frac{(2\pi t)^{d/2}}{\theta_{t}(x)}  &  =1+tG_{1}(x)+t^{3/2}R_{1}%
(x),\label{densityt}\\
\Gamma(L_{t},f,h)(x)  &  =\langle\nabla_{\Sigma} f(x),\nabla_{\Sigma}
h(x)\rangle_{g}+t^{1/2}G_{2}(x,J^{2}(f)(x),J^{2}(h)(x))\label{cdct}\\
&  +tG_{3}(x,J^{3}(f)(x),J^{3}(h)(x))+t^{3/2}R_{2}(x,J^{4}(f)(x),J^{4}%
(h)(x)),\\
L_{t}f(x)  &  =\Delta_{g}f(x)+t^{1/2}G_{4}(x,J^{3}(f)(x))+tG_{5}%
(x,J^{4}f(x))+t^{3/2}R_{3}(x,J^{5}f(x)), \label{Ltexpand}%
\end{align}
and
\begin{equation}
\Gamma_{2}(L_{t},f,f)(x)=\Gamma_{2}(\Delta_{g},f,f)(x)+t^{1/2}R_{5}%
(x,J^{5}f(x)) \label{gamma2approx}%
\end{equation}
where each $G_{i}$ is a locally defined function, which is smooth in its
arguments, and $J^{k}(u)$ is a locally defined $k$-jet of the function $u$.
Also, each $R_{i}$ is a locally defined function of $x$ which is uniformly
bounded in terms of its arguments.
\end{proposition}

We will show in Section \ref{localPCA} that Propositions \ref{extrinsic bias}
and \ref{tconvergence} are needed to prove Theorem \ref{Ricciapprox}.


\subsection{Life-Sized Coarse Ricci Curvature}

\label{lifesized} As mentioned above, in \cite{AW1,AW2}, the authors have
formulated a notion of Coarse Ricci Curvature alternative to Ollivier's Coarse
Ricci curvature. The purpose of this section is to formulate the results of
this paper in terms of the notions developed in \cite{AW1,AW2}. In particular,
we show how Ricci curvature can be approximated using test functions different
to the linear functions in Theorem \ref{Ricciapprox}.

Heuristically, the Bochner formula (\ref{bochner}) defines the Ricci curvature
on the gradient of a function $\nabla u$, up to an error term determined by
the Hessian of $u.$ \ In other words, if $u$ is a function with small Hessian
at a point, then the Bochner formula provides a good approximation for the Ricci
curvature in the direction $\nabla u.$ \ For this reason, given a pair of
points $x$ and $y,$ we attempt to construct a ``linear" function that has
gradient ``pointing" from $x$ to $y.$ For any $x,y,x\neq y\in X,$ we define
\begin{equation}
F_{x,y}(z)=\frac{1}{d(x,y)}\frac{1}{2}\left(  d^{2}(x,y)-d^{2}(y,z)+d^{2}%
(z,x)\right)  . \label{defBIGF}%
\end{equation}
Notice that in Euclidean space, this simplifies to
\[
F_{x,y}(z)=\frac{1}{\left\Vert y-x\right\Vert }\left(  x^{2}+\langle z,
y-x\rangle\right) =\frac{1}{\left\Vert y-x\right\Vert }\left(  x^{2}+
z\cdot(y-x)\right) ,
\]
which has gradient a constant unit vector
\[
\nabla F_{x,y}=\frac{(y-x)}{\left\Vert y-x\right\Vert }.
\]
We also define for $x,y\in X,$ the following function%
\begin{equation}
f_{x,y}(z)=\frac{1}{2}\left(  d^{2}(x,y)-d^{2}(y,z)+d^{2}(z,x)\right)  .
\label{deflf}%
\end{equation}
In\ Euclidean space, this has gradient
\[
\nabla f_{x,y}=(y-x).
\]
The function $F$ is chosen so that the gradient will not vanish as the points
$x,y$ approach each other, whereas the function $f$ is chosen so that a
quadratic form on the gradient $\nabla f_{x,y}$ will be the same order as the
distance-squared function $d^{2}(x,y)$ as $x,y$ approach each other. \ Both
have natural interpretations, as we will see below.

{Note that our definitions (\ref{deflf}, \ref{defBIGF}) are designed as an approximation on general metric spaces, when computing coarse Ricci curvature.  When computing Ricci curvature on a known tangent vector in Euclidean space, such a function is unneccessary.   In the combinatorial setting, one can make a special choice which minimizes the contribution of the Hessian in the Bochner formula, see \cite[pg. 659]{KKRT}.
}

\begin{definition}
\label{coarseRicci} \label{ricl}Given an operator $L$ we define the coarse
Ricci curvature for $L$ as
\[
\mathrm{Ric}_{L}(x,y)=\Gamma_{2}(L,f_{x,y},f_{x,y})(x).
\]

\end{definition}

In principle, the functions in \eqref{defBIGF} and \eqref{deflf} serve as a
substitute of the linear functions in Theorem \ref{Ricciapprox} (see also
Section \ref{localPCA}). {Notice that the quantity in (\ref{coarseRicci}) is
the same order as distance squared. To obtain a quantity that does not vanish
near the diagonal, we use (\ref{defBIGF}): }

\begin{definition}
\label{LcoarseRicci} \label{lricl}Given an operator $L$ we define the
life-sized coarse Ricci curvature for $L$ as
\[
\mathrm{RIC}_{L}(x,y)=\Gamma_{2}(L,F_{x,y},F_{x,y})(x).
\]

\end{definition}

From this, one can define notions of \emph{empirical coarse Ricci curvature},
by taking the sample versions of (\ref{coarseRicci}) and (\ref{LcoarseRicci})

Inspired by Theorems \ref{Ricciapprox} and \ref{extrinsic variance}, the
results at the end of Section \ref{empirical} will lead easily to the following.

\begin{corx}
\label{corunif} Let $\Sigma^{d}\subset\mathbb{R}^{N}$ be an embedded
submanifold and consider the metric measure space $(\Sigma,\Vert\cdot
\Vert,d\mathrm{vol}_{\Sigma})$. Suppose that we have an i.i.d. uniformly
distributed sample $\xi_{1},\ldots,\xi_{n}$ drawn from $\Sigma$. Let
\begin{equation}
t_{n}=n^{-\frac{1}{3d+3+\sigma}}, \label{choicetn2}%
\end{equation}
for any $\sigma>0$. Then%

\[
\sup_{x\in\Sigma}\left\vert \hat{\Gamma}_{2}(L_{t_{n}},F_{x,y},F_{x,y}%
)(x)-\mathrm{RIC}_{\Delta_{g}}(x,y)\right\vert \overset{\text{a.s.}%
}{\longrightarrow}0.
\]
In other words, there is a choice of scale depending on the size of the data
and the dimension of the submanifold for which the corresponding empirical
life-sized coarse Ricci curvatures converge almost surely to the life-sized
coarse Ricci curvature.
\end{corx}

The proof is given at the end of Section \ref{empirical}.

Another result proved in \cite{AW2} is

\begin{corollary}
\label{ricdelta} With the hypotheses of Theorem \ref{extrinsic bias} we have
\[
\mathrm{Ric}_{\Delta_{g}}(x,y)=\lim_{t\rightarrow0}\Gamma_{2}(L_{t}%
,f_{x,y},f_{x,y})(x).
\]

\end{corollary}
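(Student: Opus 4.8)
The plan is to deduce Corollary \ref{ricdelta} as an immediate consequence of Theorem \ref{extrinsic bias} applied to the test function $f=f_{x,y}$, for fixed $x,y\in\Sigma$. The one thing to check before the theorem can be invoked is that $f_{x,y}$ is admissible, i.e. that $f_{x,y}\in C^{5}(\Sigma)$. Since in this setting $d$ is the ambient distance $\|\cdot\|$, for fixed $x,y$ we have
\begin{align*}
f_{x,y}(z)&=\tfrac12\bigl(\|x-y\|^{2}-\|y-z\|^{2}+\|z-x\|^{2}\bigr)\\
&=\tfrac12\bigl(\|x-y\|^{2}-\|y\|^{2}+\|x\|^{2}\bigr)+\langle y-x,\,z\rangle ,
\end{align*}
so $f_{x,y}$ is the restriction to $\Sigma$ of an affine function of $z\in\R^{N}$; in particular it is smooth, hence $f_{x,y}\in C^{5}(\Sigma)$, and Theorem \ref{extrinsic bias} applies.

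Granting this, the rest is a one-line limit argument. By the definition of the coarse Ricci curvature, $\mathrm{Ric}_{\Delta_{g}}(x,y)=\Gamma_{2}(\Delta_{g},f_{x,y},f_{x,y})(x)$. Theorem \ref{extrinsic bias} with $f=f_{x,y}$ furnishes a constant $C_{1}=C_{1}(\Sigma,D^{5}f_{x,y})$, depending on $x$, $y$ and the geometry of $\Sigma$ but not on $t$, such that
\[
\sup_{w\in\Sigma}\bigl|\Gamma_{2}(\Delta_{g},f_{x,y},f_{x,y})(w)-\Gamma_{2}(L_{t},f_{x,y},f_{x,y})(w)\bigr|\le C_{1}\,t^{1/2}.
\]
Specializing the evaluation point $w$ to $x$ and letting $t\to 0$, the right-hand side tends to $0$, so $\Gamma_{2}(\Delta_{g},f_{x,y},f_{x,y})(x)=\lim_{t\to 0}\Gamma_{2}(L_{t},f_{x,y},f_{x,y})(x)$, which is exactly the claimed identity.

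Since the argument is this short, I do not expect a genuine obstacle; all of the analytic content lies in Theorem \ref{extrinsic bias}. The single point worth a remark is the structure of the constant: because $f_{x,y}$ is affine in the ambient coordinates, its ambient derivatives of order $\ge 2$ vanish identically, while its intrinsic covariant derivatives on $\Sigma$ up to fifth order are bounded by $|x-y|$ times an expression in finitely many geometric quantities of $\Sigma$ (the second fundamental form and its first few covariant derivatives). Hence one can take $C_{1}=|x-y|\,C(\Sigma)$, and in particular the convergence in Corollary \ref{ricdelta} is uniform for $(x,y)$ in any compact subset of $\Sigma\times\Sigma$. I would include this sharper statement only if it is needed later; it is not required for the corollary as stated.
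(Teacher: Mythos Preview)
Your proposal is correct and is exactly the argument the paper has in mind: the corollary is stated without proof, as it follows immediately from Theorem~\ref{extrinsic bias} applied to $f=f_{x,y}$, together with the definition $\mathrm{Ric}_{\Delta_{g}}(x,y)=\Gamma_{2}(\Delta_{g},f_{x,y},f_{x,y})(x)$. Your verification that $f_{x,y}$ is the restriction of an affine function (hence smooth, so certainly $C^{5}$) is a welcome explicit check, and your remark on the uniformity of $C_{1}$ in $(x,y)$ is correct but, as you note, not needed for the corollary as stated.
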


We note that the relation between the coarse Ricci curvature and the Ricci
curvature is as follows.

\begin{proposition}
[See \cite{AW2}]\label{coarse-to-ricci} Suppose that $M$ is a smooth
Riemannian manifold. Let $V\in T_{x}M$ with $g(V,V)=1.$ \ Then
\[
\mathrm{Ric}(V,V)=\lim_{\lambda\rightarrow0}\mathrm{RIC}_{\triangle_{g}%
}(x,\exp_{x}\left(  \lambda V\right)  ).
\]

\end{proposition}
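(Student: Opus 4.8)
The plan is to evaluate the iterated carr\'{e} du champ of the Laplace--Beltrami operator by the Bochner formula and then let $y$ approach $x$ along the geodesic $\lambda\mapsto\exp_x(\lambda V)$. First I would note that, for $y$ inside the injectivity radius of $x$, the function $z\mapsto f_{x,y}(z)$ is smooth on a neighborhood of $x$, since each of $d^2(x,y)$, $d^2(y,z)$, $d^2(z,x)$ is smooth there; hence so is $F_{x,y}=\frac{1}{d(x,y)}f_{x,y}$, which is just a constant multiple of $f_{x,y}$. Since $\Gamma_2(\Delta_g,\cdot,\cdot)$ is bilinear, and since for smooth $u$ the definition of $\Gamma_2$ reduces by Bochner's identity to $\Gamma_2(\Delta_g,u,u)=|\hess u|^2+\ric(\nabla u,\nabla u)$, we obtain
\[
\mathrm{RIC}_{\Delta_g}(x,y)=\frac{1}{d^2(x,y)}\Big(\,\bigl|\hess f_{x,y}\bigr|^2(x)+\ric\bigl(\nabla f_{x,y}(x),\nabla f_{x,y}(x)\bigr)\Big).
\]
Thus the whole problem reduces to controlling $\nabla f_{x,y}(x)$ and $\hess f_{x,y}(x)$ as $\lambda:=d(x,y)\to0$.

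Next I would compute these two quantities from the Riemannian calculus of the squared-distance function. Writing $\rho_p:=\tfrac12 d^2(p,\cdot)$, we have $f_{x,y}(z)=\rho_x(y)-\rho_y(z)+\rho_x(z)$, the first term being constant in $z$. From the standard facts $\nabla\rho_p(q)=-\exp_q^{-1}(p)$ and $\nabla\rho_x(x)=0$ we get $\nabla f_{x,y}(x)=\exp_x^{-1}(y)=\lambda V$ \emph{exactly}, so $\nabla F_{x,y}(x)=V$ for all small $\lambda$. For the Hessian, $\hess\rho_x(x)=g_x$ and the Jacobi-field expansion of the squared distance gives $\hess\rho_y(x)=g_x+O(d^2(x,y))=g_x+O(\lambda^2)$; hence $\hess f_{x,y}(x)=\hess\rho_x(x)-\hess\rho_y(x)=O(\lambda^2)$. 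Plugging these into the displayed formula and using $d^2(x,y)=\lambda^2$ gives
\[
\mathrm{RIC}_{\Delta_g}\bigl(x,\exp_x(\lambda V)\bigr)=\frac{1}{\lambda^2}\Bigl(O(\lambda^4)+\ric(\lambda V,\lambda V)\Bigr)=\ric(V,V)+O(\lambda^2),
\]
and letting $\lambda\to0$ finishes the proof.

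The main obstacle is the Hessian estimate $\hess\rho_y(x)=g_x+O(\lambda^2)$: because one divides by $\lambda^2$, it is essential that the deviation of $\hess\rho_y$ from the metric at $x$ be genuinely second order in $\lambda$, with no linear term. This is exactly the content of the second variation of arclength (the Jacobi equation) for the distance to $y$, equivalently the Taylor expansion of $d^2(y,\cdot)$ in normal coordinates centered at $x$; I would present it via whichever form is already available from \cite{AW2}. A minor point to check along the way is that passing from the formal definition $\Gamma_2(\Delta_g,u,u)=\tfrac12\bigl(\Delta_g\Gamma(\Delta_g,u,u)-2\,\Gamma(\Delta_g,\Delta_g u,u)\bigr)$ to the Bochner form is legitimate, which only needs $u\in C^3$ near $x$ and holds automatically for $F_{x,y}$; the remaining manipulations are routine.
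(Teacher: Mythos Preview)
Your argument is correct. The paper does not actually prove this proposition here; it is quoted from \cite{AW2}, so there is no in-paper proof to compare against. What you wrote is the natural direct argument: apply the Bochner identity $\Gamma_2(\Delta_g,u,u)=|\hess u|^2+\ric(\nabla u,\nabla u)$ to $u=F_{x,y}$, use the Gauss lemma to get $\nabla f_{x,y}(x)=\exp_x^{-1}(y)=\lambda V$ exactly, and use the normal-coordinate expansion of $\tfrac12 d^2(y,\cdot)$ to get $\hess\rho_y(x)=g_x+O(\lambda^2)$, hence $\hess f_{x,y}(x)=O(\lambda^2)$ and $|\hess f_{x,y}|^2(x)=O(\lambda^4)$, which survives the division by $\lambda^2$. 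The point you flag as the only delicate one---that no $O(\lambda)$ term appears in $\hess\rho_y(x)-g_x$---is indeed the crux, and it follows from the standard fact that in normal coordinates centered at $y$ one has $\Gamma^k_{ij}(z)z^k=O(|z|^2)$, so $(\hess\rho_y)_{ij}(z)=\delta_{ij}+O(|z|^2)=g_{ij}(z)+O(|z|^2)$.
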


\section{Empirical Processes and Convergence. Proof of Theorem
\ref{extrinsic variance}}

\label{empirical}

The goal of this section is to prove Theorem \ref{extrinsic variance}. This
will be done using tools from the theory of empirical processes in order to
establish uniform laws of large numbers in a sense that we will explain in
Sections \ref{secGC} through \ref{secproofLD}. For a standard reference in the
theory of empirical processes, see \cite{vdVW96}. See also \cite{SW13} for
further applications of the theory of empirical processes to the recovery of
diffusion operators from a sample.

\subsection{Estimators of the Carr\'{e} Du Champ and the Iterated Carr\'{e} Du
Champ in the uniform case}\label{estimators}

Let us assume that the measure $\mu$ is the volume measure $d\mathrm{vol}%
_{\Sigma}$. Recall that our formal definition of the Carr\'{e} du Champ of
$L_{t}$ with respect to the uniform distribution is given by
\begin{align}
\label{carre}\Gamma(L_{t}, f,h)=\frac{1}{t}\frac{1}{\theta_{t}(x)}\left(
\int_{\Sigma} e^{-\frac{\| x-y\| ^{2}}{2t}}\left(  f(y)-f(x)\right)  \left(
h(y)-h(x)\right)  d\mu(y)\right)  .
\end{align}

It is clear from \eqref{carre} that a sample estimator of the Carr\'{e} Du
Champ at a point $x$ is given by
\begin{equation}
\hat{\Gamma}(L_{t},f,f)(x)=\frac{1}{t}\frac{1}{\hat{\theta}_{t}(x)}\left(
\frac{1}{n}\sum_{j=1}e^{-\frac{\| x-\xi_{j}\|^{2}}{2t}}\left(  f(\xi
_{j})-f(x)\right)  \left(  h(\xi_{j})-h(x)\right)  \right)  , \label{48}%
\end{equation}
and recall that we defined the $t$-Laplace operator by
\begin{align}
L_{t}f(x)=\frac{2}{t}\frac{1}{\theta_{t}(x)}\int e^{-\frac{\left\|
x-y\right\|  ^{2}}{2t}}\left(  f(y)-f(x)\right)  d\mu(y), \label{49}%
\end{align}
and its sample version is
\begin{equation}
\hat{L}_{t}f(x)=\frac{2}{t}\frac{1}{\hat{\theta}_{t}(x)}\frac{1}{n}\sum
_{j=1}^{n}e^{-\frac{\| x-\xi_{j}\|^{2}}{2t}}\left(  f(\xi_{j})-f(x)\right)  .
\label{50}%
\end{equation}
Recall that the iterated Carr\'{e} du Champ is
\begin{align}
\Gamma_{2}(L_{t},f,h)=\frac{1}{2}\left(  L_{t}\Gamma(L_{t},f,h)-\Gamma
(L_{t},L_{t}f,f)-\Gamma(L_{t},f,L_{t}h)\right)  .
\end{align}
For simplicity, we will evaluate $\Gamma_{2}(L_{t},\cdot,\cdot)$ at a pair
$(f,f)$ instead of $(f,h)$ and by symmetry it is clear that we obtain
\begin{align}
\Gamma_{2}(L_{t},f,f)=\frac{1}{2}\left(  L_{t}(\Gamma_{t}(f,f))-2\Gamma
_{t}(L_{t}f,f)\right)  .
\end{align}
Combining the sample versions of $\Gamma_{t}$ and $L_{t}$ we obtain a sample
version for $\Gamma_{2}(L_{t},f,f)$%

\begin{align}
&  \hat\Gamma_{2}(L_{t},f,f)(x)=\frac{1}{t^{2}}\frac{1}{n^{2}}\sum_{j=1}%
\sum_{k=1}^{n}\frac{1}{\hat{\theta}_{t}(\xi_{k})\hat{\theta}_{t}(x)}%
e^{-\frac{\|x-\xi_{j}\|^{2}}{2t}-\frac{\|\xi_{j}-\xi_{k}\|^{2}}{2t}}(f(\xi
_{j})-f(\xi_{k}))^{2}\label{fullsample1}\\
&  -\frac{1}{t^{2}n^{2}}\sum_{j=1}^{n}\sum_{k=1}^{n}\frac{1}{\hat{\theta}%
^{2}_{t}(x)}e^{-\frac{\|x-\xi_{j}\|^{2}}{2t}-\frac{\|x-\xi_{k}\|^{2}}{2t}%
}(f(\xi_{k})-f(x))^{2}\\
&  -\frac{2}{t^{2}n^{2}}\sum_{j=1}^{n}\sum_{k=1}^{n}\frac{1}{\hat{\theta}%
_{t}(x)\hat{\theta}_{t}(\xi_{j})}e^{-\frac{\|x-\xi_{j}\|^{2}}{2t}-\frac
{\|\xi_{j}-\xi_{k}\|^{2}}{2t}}(f(\xi_{k})-f(\xi_{j}))(f(\xi_{j})-f(x))\\
&  +\frac{2}{t^{2}n^{2}}\sum_{j=1}^{n}\sum_{k=1}^{n}\frac{1}{\hat{\theta}%
^{2}_{t}(x)}e^{-\frac{\|x-\xi_{j}\|^{2}}{2t}-\frac{\|\xi_{j}-\xi_{k}\|^{2}%
}{2t}}(f(\xi_{k})-f(x))(f(\xi_{j})-f(x)). \label{fullsample2}%
\end{align}

In principle, the convergence analysis for
\eqref{fullsample1}-\eqref{fullsample2} can be done using the following
standard result in large deviation theory.

\begin{lemma}
[Hoeffding's Lemma]\label{hoeffding}Let $\xi_{1},\ldots,\xi_{n}$ be i.i.d.
random variables on the probability space $(\Sigma,\mathcal{B},\mu)$ where
$\mathcal{B}$ is the Borel $\sigma$-algebra of $\Sigma$, and let
$f:\Sigma\rightarrow\lbrack-K,K]$ be a Borel measurable function with $K>0$.
Then for the corresponding empirical measure $\mu_{n}$ and any $\varepsilon>0$
we have
\[
\Pr\left\{  |\mu_{n}f-\mu f|\geq\varepsilon\right\}  \leq2e^{-\frac
{\varepsilon^{2}n}{2K^{2}}}.
\]

\end{lemma}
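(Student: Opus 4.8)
The plan is to reduce the statement to the standard Chernoff bounding technique combined with the exponential moment estimate for bounded random variables. First I would center the summands: setting $Y_j = f(\xi_j) - \mu f$, the variables $Y_j$ are i.i.d.\ with $\E[Y_j] = 0$, each taking values in an interval of length at most $2K$ (namely $[-K - \mu f,\ K - \mu f]$), and with $S_n = \sum_{j=1}^n Y_j$ one has $\mu_n f - \mu f = S_n/n$. Thus it suffices to bound $\Pr\{S_n \ge n\varepsilon\}$ and $\Pr\{S_n \le -n\varepsilon\}$ separately and add the two.

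For the upper tail I would apply Markov's inequality to $e^{\lambda S_n}$ with $\lambda > 0$ and use independence to get
\[
\Pr\{S_n \ge n\varepsilon\} \le e^{-\lambda n \varepsilon}\,\E\big[e^{\lambda S_n}\big] = e^{-\lambda n\varepsilon}\prod_{j=1}^{n}\E\big[e^{\lambda Y_j}\big].
\]
The one step with genuine content is the bound $\E[e^{\lambda Y}] \le e^{\lambda^2 K^2/2}$, valid for any mean-zero $Y$ whose range lies in an interval $[a,b]$ of length at most $2K$. I would prove it using convexity of $t \mapsto e^{\lambda t}$ on $[a,b]$, which gives $e^{\lambda t} \le \frac{b-t}{b-a}e^{\lambda a} + \frac{t-a}{b-a}e^{\lambda b}$; taking expectations and using $\E[Y] = 0$ bounds $\E[e^{\lambda Y}]$ by $e^{\varphi(\lambda)}$ for a function $\varphi$ with $\varphi(0) = \varphi'(0) = 0$ and $\varphi'' \le (b-a)^2/4 \le K^2$, so a second order Taylor expansion yields $\varphi(\lambda) \le \lambda^2 K^2/2$. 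Substituting and then optimizing by taking $\lambda = \varepsilon/K^2$ gives $\Pr\{S_n \ge n\varepsilon\} \le e^{-n\varepsilon^2/(2K^2)}$.

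The lower tail follows by applying the same argument to $-Y_j$, which again has mean zero and range of length at most $2K$, so $\Pr\{S_n \le -n\varepsilon\} \le e^{-n\varepsilon^2/(2K^2)}$; a union bound over the two one-sided events then produces the factor $2$ in the claimed inequality. I do not anticipate a real obstacle here: this is a textbook concentration estimate, recorded only to fix notation and constants for the empirical process arguments that follow, and the sole slightly delicate point is the exponential moment bound, which is itself a short convexity-plus-Taylor computation.
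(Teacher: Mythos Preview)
Your argument is correct and is the standard Chernoff--Hoeffding derivation. The paper, however, does not prove this lemma at all: it is quoted as a ``standard result in large deviation theory'' and used as a black box for the empirical-process estimates that follow. So there is nothing to compare against; you have simply supplied the textbook proof that the authors chose to omit.
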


Observe, however, that \eqref{fullsample1}-\eqref{fullsample2} is a non-linear
expression which will involve non-trivial interactions between the data points
$\xi_{1},\ldots,\xi_{n}$. This non-trivial interaction between the points
$\xi_{1},\ldots,\xi_{n}$ will produce a loss of independence and we will
\emph{not} be able to apply Hoeffding's Lemma directly to
\eqref{fullsample1}-\eqref{fullsample2}. In order to address this difficulty
we will establish several uniform laws of large numbers which will provide us
with a large deviation estimate for \eqref{fullsample1}-\eqref{fullsample2}.

\begin{remark}
\label{schemCDC2} We will not use directly the expression
\eqref{fullsample1}-\eqref{fullsample2}, instead we will write
\eqref{fullsample1}-\eqref{fullsample2} schematically in the form
\begin{align}
\hat\Gamma_{2}(L_{t}(f,f)(x)=\frac{1}{2}\left(  \hat{L}_{t}\left(  \hat
{\Gamma}(f,f)\right)  (x)-2\hat{\Gamma}_{t}(\hat{L}_{t}f,f)(x)\right)  ,
\end{align}
which is clearly equivalent to \eqref{fullsample1}-\eqref{fullsample2}.
\end{remark}


\subsection{Glivenko-Cantelli Classes}

\label{secGC} A \emph{Glivenko-Cantelli} class of functions is essentially a
class of functions for which a uniform law of large numbers is satisfied.

\begin{definition}
Let $\mu$ be a fixed probability distribution defined on $\Sigma$. A class
$\mathcal{F}$ of functions of the form $f:\Sigma\rightarrow\mathbb{R}$ is
\emph{Glivenko-Cantelli} if

\begin{enumerate}
\item[(a)] $f\in L^{1}(d\mu)$ for any $f\in\mathcal{F}$,

\item[(b)] For any i.i.d. sample $\xi_{1},\ldots,\xi_{n}$ drawn from $\Sigma$
whose distribution is $\mu$ we have uniform convergence in probability in the
sense that for any $\varepsilon>0$
\begin{align}
\lim_{n\rightarrow\infty}{\Pr}^{*}\left\{  \sup_{f\in\mathcal{F}}|\mu_{n}f-\mu
f|>\varepsilon\right\}  =0.
\end{align}

\end{enumerate}
\end{definition}

\begin{remark}
\emph{{ Note that in general we have to consider outer probabilities ${\Pr
}^{*}$ instead of $\Pr$ because the class $\mathcal{F}$ may not be countable
and the supremum $\sup_{f\in\mathcal{F}}|\mu_{n}f-\mu f|$ may not be
measurable. On the other hand, if the class $\mathcal{F}$ is separable in
$L^{\infty}(\Sigma)$, then we can replace ${\Pr}^{*}$ by $\Pr$. While all of
the classes we will encounter in this paper will be separable in $L^{\infty
}(\Sigma),$ we use ${\Pr}^{*}$ when we deal with a general class.} }
\end{remark}

Let $\mathcal{F}$ be a class of functions defined on $\Sigma$ and totally
bounded in $L^{\infty}(\Sigma)$. Given $\delta>0$ we let $\mathcal{N}%
(\mathcal{F},\delta)$ be the $L^{\infty}$ $\delta$-covering number of
$\mathcal{F}$ , i.e.,
\begin{equation}
\mathcal{N}(\mathcal{F},\delta)=\inf\{m:\mathcal{F}~\text{is covered
by}~m~\text{balls of radius}~\delta~\text{in the}~L^{\infty}~\text{norm}\}.
\label{conumb}%
\end{equation}

\begin{lemma}
\label{equicontinuity} Let $\mathcal{F}$ be an equicontinuous class of
functions in $L^{\infty}(\Sigma)$ that satisfies $\displaystyle{\sup
_{f\in\mathcal{F}}\{\Vert f\Vert_{L^{\infty}(\Sigma)}\}}\leq M<\infty$ for
some $M>0$. Then for any distribution $\mu$ which is absolutely continuous
with respect to $d\mathrm{vol}_{\Sigma}$, the class $\mathcal{F}$ is $\mu
$-Glivenko-Cantelli. Moreover, if $\xi_{1},\ldots,\xi_{n}$ is an i.i.d. sample
drawn from $\Sigma$ with distribution $\mu$ we have
\[
{\Pr}^{\ast}\left\{  \sup_{f\in\mathcal{F}}\left\vert \mu_{n}f-\mu
f\right\vert \geq\varepsilon\right\}  \leq2\mathcal{N}\left(  \mathcal{F}%
,\frac{\varepsilon}{4}\right)  e^{-\frac{\varepsilon^{2}n}{8M^{2}}}.
\]

\end{lemma}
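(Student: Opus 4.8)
The goal is to show that an equicontinuous, uniformly bounded class $\mathcal{F} \subset L^\infty(\Sigma)$ is $\mu$-Glivenko-Cantelli with the stated quantitative bound. Since $\Sigma$ is compact, equicontinuity together with the uniform bound $M$ makes $\mathcal{F}$ totally bounded in $L^\infty(\Sigma)$ (Arzel\`a--Ascoli), so the covering numbers $\mathcal{N}(\mathcal{F}, \delta)$ are finite for every $\delta > 0$; this is what lets the argument get off the ground. The strategy is the standard one: control the supremum over the infinite class $\mathcal{F}$ by the maximum over a finite $\delta$-net, handle that finite maximum with a union bound plus Hoeffding's Lemma (Lemma \ref{hoeffding}), and absorb the discretization error into the $\varepsilon$ budget.

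Here are the steps. First, fix $\varepsilon > 0$ and set $\delta = \varepsilon/4$. Choose functions $f_1, \dots, f_m$ with $m = \mathcal{N}(\mathcal{F}, \delta)$ whose $\delta$-balls in $L^\infty$ cover $\mathcal{F}$. Second, for any $f \in \mathcal{F}$ pick $f_i$ with $\|f - f_i\|_{L^\infty(\Sigma)} \leq \delta$; then by the triangle inequality
\[
|\mu_n f - \mu f| \leq |\mu_n f - \mu_n f_i| + |\mu_n f_i - \mu f_i| + |\mu f_i - \mu f| \leq 2\delta + |\mu_n f_i - \mu f_i|,
\]
using that both $\mu_n$ and $\mu$ are probability measures, so $|\mu_n(f - f_i)| \leq \|f - f_i\|_{L^\infty} \leq \delta$ and likewise for $\mu$. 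Third, since $2\delta = \varepsilon/2$, the event $\{\sup_{f \in \mathcal{F}} |\mu_n f - \mu f| \geq \varepsilon\}$ is contained in $\{\max_{1 \leq i \leq m} |\mu_n f_i - \mu f_i| \geq \varepsilon/2\}$, which is measurable, so outer probability reduces to probability here. Fourth, apply the union bound and then Hoeffding's Lemma to each $f_i$ (which maps into $[-M, M]$, so we take $K = M$ and threshold $\varepsilon/2$):
\[
\Pr\left\{ \max_{1 \leq i \leq m} |\mu_n f_i - \mu f_i| \geq \tfrac{\varepsilon}{2} \right\} \leq \sum_{i=1}^m \Pr\left\{ |\mu_n f_i - \mu f_i| \geq \tfrac{\varepsilon}{2} \right\} \leq m \cdot 2 e^{-\frac{(\varepsilon/2)^2 n}{2M^2}} = 2\mathcal{N}\!\left(\mathcal{F}, \tfrac{\varepsilon}{4}\right) e^{-\frac{\varepsilon^2 n}{8M^2}}.
\]
This is exactly the claimed inequality. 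Finally, since $\mathcal{N}(\mathcal{F}, \varepsilon/4)$ is a fixed finite number independent of $n$, the right-hand side tends to $0$ as $n \to \infty$, which gives property (b) of the Glivenko-Cantelli definition; property (a) is immediate because every $f \in \mathcal{F}$ is bounded on the finite-measure space $\Sigma$, hence in $L^1(d\mu)$.

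**Main obstacle.** There is no deep obstacle; the content is entirely in the reduction-to-a-net step and bookkeeping the constants so the $\varepsilon/4$ in the covering number and the $8M^2$ in the exponent come out right. The one point deserving a line of care is the measurability/outer-probability issue: one must note that after passing to the finite net the relevant event becomes a finite union of measurable sets, so ${\Pr}^*$ may be replaced by $\Pr$ at that stage, and that the bound on ${\Pr}^*$ of the original (possibly non-measurable) supremum event follows by monotonicity of outer probability under the set inclusion established in step three. A secondary point is to make explicit that equicontinuity plus boundedness on the compact manifold $\Sigma$ yields finiteness of the $L^\infty$-covering numbers, since the statement of the lemma presupposes $\mathcal{N}(\mathcal{F}, \delta) < \infty$ implicitly.
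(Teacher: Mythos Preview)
Your proposal is correct and follows essentially the same approach as the paper: Arzel\`a--Ascoli gives finite covering numbers, a $\delta=\varepsilon/4$ net reduces the supremum to a maximum over finitely many functions up to a $2\delta$ error, and a union bound together with Hoeffding's Lemma (Lemma~\ref{hoeffding}) yields the stated inequality. Your write-up is in fact slightly more explicit than the paper's on the measurability/outer-probability point and on verifying property~(a) of the Glivenko--Cantelli definition.
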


\begin{proof}
By equicontinuity of $\mathcal{F}$, it follows from the Arzel\`{a}-Ascoli
theorem that $\mathcal{F}$ is precompact in the $L^{\infty}(\Sigma)$ norm and
hence totally bounded in $L^{\infty}(\Sigma)$. In particular for every
$\delta>0$, the number $\mathcal{N}(\mathcal{F},\delta)$ is finite. Let
$\mathcal{G}$ be a finite class such that the union of all balls with center
in $\mathcal{G}$ and radius $\delta$ covers $\mathcal{F}$ and $|\mathcal{G}%
|=\mathcal{N}(\mathcal{F},\delta)$. For any $f\in\mathcal{F}$ there exists
$\phi\in\mathcal{G}$ such that $\|f-\phi\|_{L^{\infty}(\Sigma)}<\delta$ and we
obtain
\begin{align}
|\mu_{n}f-\mu f|\le2\delta+|\mu_{n}\phi-\mu\phi|,
\end{align}
and clearly
\begin{align}
\sup_{f\in\mathcal{F}}|\mu_{n}f-\mu f|\le2\delta+\max_{\phi\in\mathcal{G}}%
|\mu_{n}\phi-\mu\phi|.
\end{align}
Fixing $\varepsilon>0$ and choosing $\delta=\varepsilon/4$ we observe that
\begin{align}
{\Pr}^{*}\left\{  \sup_{f\in\mathcal{F}}|\mu_{n}f-\mu f|\ge\varepsilon
\right\}  \le\Pr\left\{  \max_{\phi\in\mathcal{G}}|\mu_{n}\phi-\mu\phi
|\ge\frac{\varepsilon}{ 2}\right\}  ,
\end{align}
and by Hoeffding's inequality we have
\begin{align}
{\Pr}\left\{  \max_{\phi\in\mathcal{G}}|\mu_{n}\phi-\mu\phi|\ge\frac
{\varepsilon}{2}\right\}  \le2\mathcal{N}\left(  \mathcal{F},\frac
{\varepsilon}{4}\right)  e^{-\frac{\varepsilon^{2}n}{8M^{2}}},
\end{align}
which implies the lemma.
\end{proof}

{ Recall that as in the statement of Theorem \ref{Ricciapprox},
we have defined the space $\mathrm{Lip}(\mathbb{R}^{N})$ of functions with
bounded Lipschitz semi-norm in $\mathbb{R}^{N}$. To be clear, we will be using
the following norms and semi-norms:
}%

\begin{equation}
\Vert f\Vert_{\mathrm{Lip}}=\sup_{x,y\in\mathbb{R}^{N},x\neq y}\left\{
\frac{|f(x)-f(y)|}{\Vert x-y\Vert}\right\}  .
\end{equation}

\begin{equation}
\Vert f\Vert_{C^{k}}=\Vert f\Vert_{L^{\infty}(\Sigma)}+\sum_{j=1}^{k}\Vert
D_{j}f\Vert_{L^{\infty}(\Sigma)}. \label{cknorm}%
\end{equation}
where
\[
\Vert D_{j}f\Vert_{L^{\infty}(\Sigma)}=\sup_{x\in\Sigma}\Vert D_{j}f(x)\Vert,
\]
i.e., the norm $\Vert D_{j}f(x)\Vert$ is the norm in the ambient space
$\mathbb{R}^{N}$ , and%

\begin{equation}
\Vert f\Vert_{\text{A-t-}\mathrm{Lip}}=\inf\left\{  A+B:|f(x)-f(y)|\leq A\Vert
x-y\Vert+Bt^{1/2}\text{ for all }x,y\in\mathbb{R}^{N}\right\}  \label{atlip}%
\end{equation}
this $t$-almost Lipschitz norm being weaker than the Lipschitz norm. \
We will frequently use the following classes of functions
\begin{align}
\mathcal{F}_{f,h}^{t}  &  =\left\{  \phi_{t}(\xi,\zeta)=t^{-1/2}%
e^{-\frac{\Vert\xi-\zeta\Vert^{2}}{2t}}(f(\xi)-f(\zeta))(h(\xi)-h(\zeta
)):\xi\in\Sigma\right\}  ,\label{Ft}\\
\mathcal{G}^{t}  &  =\left\{  \psi_{t}(\xi,\zeta)=t^{1/2}e^{-\frac{\Vert
\xi-\zeta\Vert^{2}}{2t}}:\xi\in\Sigma\right\}  , \label{Gt}%
\end{align}
where $f,h$ in \eqref{Ft} are fixed functions. Given a class of functions
$\mathscr{S}$ we use $\mathscr{M}_{\mathscr{S}}$ to denote
\[
\mathscr{M}_{\mathscr{S}}=\sup_{f\in\mathscr{S}}\{\Vert f\Vert_{L^{\infty
}(\Sigma)}\}.
\]

With this notation we easily find that%

\begin{align}
\mathscr{M}_{\mathcal{F}^{t}_{f,h}}  &  =\sup_{\phi\in\mathcal{F}^{t}}%
\{\Vert\phi\Vert_{L^{\infty}(\Sigma)}\}\leq\left(  \frac{2}{e}\right)
t^{1/2}\|f\|_{\mathrm{Lip}}\|h\|_{\mathrm{Lip}},\label{boundFt}\\
\mathscr{M}_{\mathcal{F}_{f,h}^{t}}  &  \leq t^{1/2}\Vert f\Vert
_{\text{A-t-}\mathrm{Lip}}\Vert h\Vert_{\mathrm{Lip}},\\
\mathscr{M}_{\mathcal{G}^{t}}  &  =\sup_{\psi\in\mathcal{G}^{t}}\{\Vert
\psi\Vert_{L^{\infty}(\Sigma)}\}=t^{1/2}. \label{boundGt}%
\end{align}

\subsection{Ambient Covering Numbers}

\label{ambient} In this subsection we show that the computation of covering
numbers of the classes of functions $\mathcal{F}_{f,h}^{t}$ and $\mathcal{G}%
^{t}$ introduced in \eqref{Ft}, \eqref{Gt}
reduces to the computation of covering numbers of submanifolds of
$\mathbb{R}^{N}$. For this purpose we will need the notion of \emph{ambient
covering number of} a totally bounded set $X$ of $\mathbb{R}^{N}$ defined by
\begin{equation}
\mathcal{A}(X,\delta)=\inf\{m:\exists A~\text{with}~|A|=m~\text{and}%
~X\subset\bigcup_{a\in A}B_{\mathbb{R}^{N},\delta}(a)\}. \label{amconu}%
\end{equation}
where the ball $B_{\mathbb{R}^{N},\delta}(a)$ is taken with respect to the
ambient distance $\Vert\cdot\Vert$.

\begin{lemma}
\label{coveringpsi} For any $\xi,\tilde{\xi},\zeta\in\mathbb{R}^{N}$ we have
\begin{align}
\left|  t^{1/2}e^{-\frac{\|\zeta-\xi\|^{2}}{2t}}-t^{1/2}e^{-\frac
{\|\zeta-\tilde{\xi}\|^{2}}{2t}}\right|  \le e^{-1/2}\|\xi-\tilde{\xi}\|.
\end{align}
In particular
\begin{align}
\mathcal{N}(\mathcal{G}^{t},\varepsilon)\le\mathcal{A}(\Sigma,e^{1/2}%
\varepsilon).
\end{align}

\end{lemma}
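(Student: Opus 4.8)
The plan is to prove the pointwise inequality first and then deduce the covering-number bound from it. For the pointwise estimate, fix $\zeta$ and consider the function $g:\mathbb{R}^{N}\to\mathbb{R}$ defined by $g(\xi)=t^{1/2}e^{-\|\zeta-\xi\|^{2}/(2t)}$. It suffices to show that $g$ is Lipschitz with constant $e^{-1/2}$ with respect to the ambient distance, since then $|g(\xi)-g(\tilde{\xi})|\le e^{-1/2}\|\xi-\tilde{\xi}\|$ is exactly the claim. To bound the Lipschitz constant I would compute the gradient: writing $r=\|\zeta-\xi\|$, we have $\|\nabla_{\xi}g\|=t^{1/2}\cdot\frac{r}{t}e^{-r^{2}/(2t)}=\frac{r}{t^{1/2}}e^{-r^{2}/(2t)}$. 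Substituting $s=r^{2}/(2t)\ge0$ gives $\|\nabla_{\xi}g\|=\sqrt{2s}\,e^{-s}$, and an elementary one-variable calculus argument (differentiate $2se^{-2s}$, critical point at $s=1/2$) shows $\sup_{s\ge0}\sqrt{2s}\,e^{-s}=e^{-1/2}$. Hence $\|\nabla g\|\le e^{-1/2}$ everywhere, and integrating along the segment from $\tilde\xi$ to $\xi$ (or invoking the mean value inequality) yields the pointwise bound.

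For the covering-number consequence, suppose $A\subset\mathbb{R}^{N}$ is a finite set with $|A|=\mathcal{A}(\Sigma,e^{1/2}\varepsilon)$ such that $\Sigma\subset\bigcup_{a\in A}B_{\mathbb{R}^{N},e^{1/2}\varepsilon}(a)$. For each $a\in A$ pick (if the ball meets $\Sigma$) a point $\tilde\xi_{a}\in\Sigma$ within ambient distance $e^{1/2}\varepsilon$ of every $\xi\in\Sigma$ in that ball — more simply, just let the centers of the covering balls serve as reference points. Given any $\psi_{t}(\xi,\cdot)\in\mathcal{G}^{t}$ with $\xi\in\Sigma$, choose $a\in A$ with $\|\xi-a\|<e^{1/2}\varepsilon$; then by the pointwise inequality, for every $\zeta$,
\[
\left|t^{1/2}e^{-\frac{\|\zeta-\xi\|^{2}}{2t}}-t^{1/2}e^{-\frac{\|\zeta-a\|^{2}}{2t}}\right|\le e^{-1/2}\|\xi-a\|<e^{-1/2}\cdot e^{1/2}\varepsilon=\varepsilon,
\]
so $\psi_{t}(\xi,\cdot)$ lies in the $L^{\infty}$-ball of radius $\varepsilon$ around the function $\zeta\mapsto t^{1/2}e^{-\|\zeta-a\|^{2}/(2t)}$. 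These $|A|$ functions therefore form an $\varepsilon$-net for $\mathcal{G}^{t}$ in $L^{\infty}(\Sigma)$, giving $\mathcal{N}(\mathcal{G}^{t},\varepsilon)\le|A|=\mathcal{A}(\Sigma,e^{1/2}\varepsilon)$.

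There is no real obstacle here; this is a routine Lipschitz-estimate-plus-quantization argument. The only point requiring a modicum of care is the extremal computation $\sup_{s\ge0}\sqrt{2s}\,e^{-s}=e^{-1/2}$, and — more subtly — the passage from "pointwise in $\zeta$" to "$L^{\infty}$-norm over $\zeta\in\Sigma$": one must observe that the Lipschitz constant $e^{-1/2}$ is independent of $\zeta$, so the bound on $\sup_{\zeta\in\Sigma}|\psi_{t}(\xi,\zeta)-\psi_{t}(a,\zeta)|$ is the same $e^{-1/2}\|\xi-a\|$. I would also remark that the elements of $\mathcal{G}^{t}$ are naturally indexed by $\xi\in\Sigma$ (the variable $\zeta$ ranging freely), which is why an ambient cover of $\Sigma$ by balls translates directly into an $L^{\infty}$-cover of the function class, with the radius rescaled by the Lipschitz constant $e^{-1/2}$.
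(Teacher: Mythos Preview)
Your proof is correct and follows essentially the same approach as the paper: compute the gradient of $\psi_{t}$ in $\xi$, bound its norm by the universal constant $e^{-1/2}$ via the one-variable maximization, and conclude the Lipschitz estimate; the covering-number consequence then follows by pulling back an ambient $e^{1/2}\varepsilon$-net of $\Sigma$. The paper leaves the covering-number deduction implicit in the statement, while you spell it out, but the arguments are otherwise identical.
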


\begin{proof}
Observe that the function $\psi_{t}(\xi,\zeta)=t^{1/2}e^{-\frac{\|\zeta
-\xi\|^{2}}{2t}}$ satisfies
\begin{align}
D_{\xi}\psi_{t}(\xi,\zeta)=t^{1/2}\frac{(\zeta-\xi)}{t}e^{-\frac{\| \xi
-\zeta\|^{2}}{2t}},
\end{align}
and therefore
\begin{align}
\sup_{\xi,\zeta\in\mathbb{R}^{N}}\left\|  D_{\xi}\psi_{t}(\xi,\zeta)\right\|
\leq\sqrt{2}\sup_{\rho>0}\left\{  \rho e^{-\rho^{2}}\right\}  =e^{-\frac{1}%
{2}},
\end{align}
and therefore we have the Lipschitz estimate
\begin{align}
|\psi_{t}(\xi,\zeta)-\psi_{t}(\tilde{\xi},\zeta)|\leq e^{-1/2}\|\xi-\tilde
{\xi}\|. \label{bracketpsi}%
\end{align}

\end{proof}

\begin{corollary}
\label{linfinityclass}Fix a function $0\ne h\in L^{\infty}(\Sigma)$ with
$\|h\|_{L^{\infty}(\Sigma)}\le C$ and consider the class of functions%

\begin{align}
\label{Ht}\mathcal{H}^{t}_{h}=\left\{  \psi_{t}(\zeta,\cdot)h(\cdot):\zeta
\in\Sigma\right\}  .
\end{align}
Then for every $\varepsilon>0$ we have
\begin{align}
\mathcal{N}(\mathcal{H}^{t}_{h},\varepsilon)\le\mathcal{A}\left(  \Sigma
,\frac{e^{1/2}}{C}\varepsilon\right)  .
\end{align}

\end{corollary}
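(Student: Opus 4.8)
The plan is to reduce the covering number of $\mathcal{H}^t_h$ to the ambient covering number of $\Sigma$ by showing that, as $\zeta$ varies over $\Sigma$, the functions $\psi_t(\zeta,\cdot)h(\cdot)$ move in $L^\infty(\Sigma)$ no faster than a fixed multiple of the ambient distance traversed by $\zeta$. Concretely, I would fix $\zeta,\tilde\zeta\in\Sigma$ and estimate, for an arbitrary point $\cdot = \eta \in \Sigma$,
\[
\bigl|\psi_t(\zeta,\eta)h(\eta) - \psi_t(\tilde\zeta,\eta)h(\eta)\bigr|
= |h(\eta)|\,\bigl|\psi_t(\zeta,\eta) - \psi_t(\tilde\zeta,\eta)\bigr|
\le C\,\bigl|\psi_t(\zeta,\eta) - \psi_t(\tilde\zeta,\eta)\bigr|.
\]
Then I invoke Lemma \ref{coveringpsi} — whose Lipschitz estimate $|\psi_t(\xi,\zeta) - \psi_t(\tilde\xi,\zeta)|\le e^{-1/2}\|\xi-\tilde\xi\|$ is symmetric in the roles of the two arguments (since $\psi_t$ is symmetric in $(\xi,\zeta)$) — to bound the right-hand side by $C e^{-1/2}\|\zeta - \tilde\zeta\|$, giving
\[
\|\psi_t(\zeta,\cdot)h(\cdot) - \psi_t(\tilde\zeta,\cdot)h(\cdot)\|_{L^\infty(\Sigma)}
\le C e^{-1/2}\,\|\zeta - \tilde\zeta\|.
\]

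With this Lipschitz-type bound in hand, the covering argument is routine: take an ambient cover of $\Sigma$ by $m = \mathcal{A}\bigl(\Sigma, \tfrac{e^{1/2}}{C}\varepsilon\bigr)$ balls of radius $\tfrac{e^{1/2}}{C}\varepsilon$ in $\mathbb{R}^N$, pick a center $\zeta_i$ from each ball that meets $\Sigma$, and observe that for any $\zeta\in\Sigma$ lying in the $i$-th ball we have $\|\zeta - \zeta_i\| < \tfrac{e^{1/2}}{C}\varepsilon$, hence $\|\psi_t(\zeta,\cdot)h(\cdot) - \psi_t(\zeta_i,\cdot)h(\cdot)\|_{L^\infty(\Sigma)} < C e^{-1/2}\cdot \tfrac{e^{1/2}}{C}\varepsilon = \varepsilon$. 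Thus the balls of radius $\varepsilon$ in $L^\infty(\Sigma)$ centered at the functions $\psi_t(\zeta_i,\cdot)h(\cdot)$ cover $\mathcal{H}^t_h$, which is precisely the claimed bound $\mathcal{N}(\mathcal{H}^t_h,\varepsilon)\le\mathcal{A}\bigl(\Sigma, \tfrac{e^{1/2}}{C}\varepsilon\bigr)$.

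There is essentially no serious obstacle here — this corollary is a direct generalization of the final statement of Lemma \ref{coveringpsi} (which is the case $h\equiv 1$, $C=1$), obtained simply by carrying the bounded factor $h$ through the estimate. The only point requiring a moment's care is the orientation of the Lipschitz estimate from Lemma \ref{coveringpsi}: as proved there, the derivative bound is taken in the first slot $\xi$, so to apply it with the first slot held fixed at $\eta$ and the second slot varying one uses the symmetry $\psi_t(\xi,\zeta) = \psi_t(\zeta,\xi)$, which is immediate from the definition $\psi_t(\xi,\zeta) = t^{1/2}e^{-\|\xi-\zeta\|^2/(2t)}$. I would also note in passing that one may assume $h\not\equiv 0$ (as in the hypothesis), so that $C>0$ and the rescaled radius $\tfrac{e^{1/2}}{C}\varepsilon$ is well-defined and positive.
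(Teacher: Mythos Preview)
Your proof is correct and follows essentially the same approach as the paper: factor out $|h|\le C$, apply the Lipschitz estimate of Lemma~\ref{coveringpsi} to get $\|\psi_t(\zeta,\cdot)h(\cdot)-\psi_t(\zeta',\cdot)h(\cdot)\|_{L^\infty(\Sigma)}\le C e^{-1/2}\|\zeta-\zeta'\|$, and conclude by the standard covering argument. Your explicit remark about the symmetry of $\psi_t$ needed to apply the Lipschitz bound in the correct slot is a worthwhile clarification that the paper leaves implicit.
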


\begin{proof}
We use Lemma \ref{coveringpsi} to obtain the estimate
\begin{align}
\left\|  \psi_{t}(\zeta,\cdot)h(\cdot)-\psi_{t}(\zeta^{^{\prime}}%
,\cdot)h(\cdot)\right\|  _{L^{\infty}(\Sigma)}  &  \le C\|\psi_{t}(\zeta
,\cdot)-\psi_{t}(\zeta^{\prime},\cdot)\|_{L^{\infty}(\Sigma)}\\
&  \le Ce^{-1/2}\|\zeta-\zeta^{\prime}\|,
\end{align}
from which the corollary follows.
\end{proof}

\begin{lemma}
\label{coveringphi} For any $\phi_{t}(\xi,\cdot),\phi_{t}(\xi^{^{\prime}%
},\cdot)\in\mathcal{F}_{f,h}^{t}$ we have
\[
\sup_{\zeta\in\mathbb{R}^{d}}|\phi_{t}(\xi,\zeta)-\phi_{t}(\xi^{^{\prime}%
},\zeta)|\leq C(f,h)\Vert\xi-\xi^{^{\prime}}\Vert,
\]
where
\begin{align}
C(f,h)=C_{0}\|f\|_{\mathrm{Lip}}\|h\|_{\mathrm{Lip}}, \label{C(f,h)}%
\end{align}
and $C_{0}$ is a universal constant. Thus
\end{lemma}

\begin{align}
\mathcal{N}(\mathcal{F}_{f,h}^{t},\delta)\le\mathcal{A}\left(  \Sigma
,\frac{\delta}{C(f,h)}\right)  .
\end{align}

\begin{proof}
Let $\phi_{t}(\xi,\cdot)\in\mathcal{F}_{f,h}^{t}$. \ Fixing $\zeta$ and
differentiating in $\xi$ we have
\begin{align}
D_{\xi}\phi_{t}(\xi,\zeta)=  &  t^{-1/2}\frac{(\zeta-\xi)}{2t}e^{-\frac
{\Vert\xi-\zeta\Vert^{2}}{2t}}(f(\xi)-f(\zeta))(h(\xi)-h(\zeta))\\
&  +t^{-1/2}e^{-\frac{\Vert\xi-\zeta\Vert^{2}}{2t}}D_{\xi}f(\xi)(h(\xi
)-f(\zeta))\\
&  +t^{-1/2}e^{-\frac{\Vert\xi-\zeta\Vert^{2}}{2t}}D_{\xi}h(\xi)(f(\xi
)-f(\zeta)),
\end{align}
and then
\begin{align}
\Vert D_{\xi}\phi_{t}(\xi,\zeta)\Vert &  \leq\Vert f\Vert_{\mathrm{Lip}}\Vert
h\Vert_{\mathrm{Lip}}\frac{\Vert\xi-\zeta\Vert^{3}}{t^{3/2}}e^{-\frac{\Vert
\xi-\zeta\Vert^{2}}{2t}}\\
&  +\frac{\Vert\xi-\zeta\Vert}{t^{1/2}}e^{-\frac{\Vert\xi-\zeta\Vert^{2}}{2t}%
}\Vert f\Vert_{\mathrm{Lip}}\Vert h\Vert_{\mathrm{Lip}}\\
&  +\frac{\Vert\xi-\zeta\Vert}{t^{1/2}}e^{-\frac{\Vert\xi-\zeta\Vert^{2}}{2t}%
}\Vert h\Vert_{\mathrm{Lip}}\Vert f\Vert_{\mathrm{Lip}}\\
&  \leq C_{0} \Vert f\Vert_{\mathrm{Lip}}\Vert h\Vert_{\mathrm{Lip}},
\end{align}
where
\[
C_{0}=3\max\left(  \sup_{\rho>0}\{\rho^{3}e^{-\rho^{2}/2}\},\sup_{\rho
>0}\{\rho e^{-\rho/2}\}\right)  .
\]
It follows that for any $\zeta\in\Sigma$ and any $\xi,\xi^{^{\prime}}\in
\Sigma$ we have the Lipschitz estimate
\[
|\phi_{t}(\xi,\zeta)-\phi_{t}(\xi^{\prime},\zeta)|\leq C(f,h)\Vert\xi
-\xi^{^{\prime}}\Vert.
\]

\end{proof}

The following can be obtained in a similar fashion.

\begin{lemma}
\label{lipschitzupsilon} Let $f\in C^{1}$ and let $\upsilon_{t}$ be given by
\begin{align*}
\upsilon_{t}(\xi,\zeta)=e^{-\frac{\|\xi-\zeta\|^{2}}{2t}}(f(\xi)-f(\zeta)).
\end{align*}
We then have the following estimate
\begin{align}
\left|  \upsilon_{t}(\xi,\zeta)-\upsilon_{t}(\xi^{^{\prime}},\zeta)\right|
\le\left(  \frac{2t}{e}+1\right)  \|f\|_{\mathrm{Lip}} \|\xi-\xi^{^{\prime}%
}\|.
\end{align}

\begin{proof}
As before,
\begin{align}
|D_{\xi}\upsilon_{t}(\xi,\zeta)|  &  \le\frac{\|\xi-\zeta\|}{t}e^{-\frac
{\|\xi-\zeta\|^{2}}{2t}}|f(\xi)-f(\zeta)|+e^{-\frac{\|\xi-\zeta\|^{2}}{2t}%
}\|D_{\xi}f(\xi)\|\\
&  \le\frac{\|\xi-\zeta\|^{2}}{t}e^{-\frac{\|\xi-\zeta\|^{2}}{2t}%
}\|f\|_{\mathrm{Lip}}+\sup_{x\in\Sigma}\|D_{\xi}f(\xi)\|,
\end{align}
and
\begin{align}
\sup_{\rho>0}\rho^{2}e^{-\rho^{2}}=\frac{2}{e}.
\end{align}

\end{proof}
\end{lemma}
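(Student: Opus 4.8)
The plan is to reduce the claim to a uniform pointwise bound on the gradient of $\upsilon_{t}(\cdot,\zeta)$ in its first argument, exactly as was done for $\psi_{t}$ in Lemma \ref{coveringpsi} and for $\phi_{t}$ in Lemma \ref{coveringphi}, and then to integrate along the straight segment joining $\xi$ and $\xi'$. Freezing $\zeta$ and applying the product rule to $\upsilon_{t}(\xi,\zeta)=e^{-\|\xi-\zeta\|^{2}/(2t)}(f(\xi)-f(\zeta))$ gives
\[
D_{\xi}\upsilon_{t}(\xi,\zeta)=-\frac{\xi-\zeta}{t}\,e^{-\frac{\|\xi-\zeta\|^{2}}{2t}}\bigl(f(\xi)-f(\zeta)\bigr)+e^{-\frac{\|\xi-\zeta\|^{2}}{2t}}D_{\xi}f(\xi),
\]
so that $\|D_{\xi}\upsilon_{t}(\xi,\zeta)\|$ is controlled by the sum of two terms. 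Since $f\in C^{1}$, this derivative is continuous, so the fundamental theorem of calculus along segments in $\mathbb{R}^{N}$ is legitimate and the desired Lipschitz estimate follows once the gradient is bounded uniformly in both $\xi$ and $\zeta$.

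For the first term I would use $|f(\xi)-f(\zeta)|\le\|f\|_{\mathrm{Lip}}\|\xi-\zeta\|$ to obtain $\frac{\|\xi-\zeta\|^{2}}{t}e^{-\|\xi-\zeta\|^{2}/(2t)}\|f\|_{\mathrm{Lip}}$; for the second term simply $e^{-\|\xi-\zeta\|^{2}/(2t)}\|D_{\xi}f(\xi)\|\le\sup_{x\in\Sigma}\|Df(x)\|\le\|f\|_{C^{1}}$, the Gaussian factor being at most $1$. The only nontrivial step is the elementary extremization: with $s=\|\xi-\zeta\|^{2}/(2t)$ the first term is $2s\,e^{-s}\|f\|_{\mathrm{Lip}}$, and $\sup_{s\ge 0}2s\,e^{-s}=\tfrac{2}{e}$ (attained at $s=1$); equivalently, writing $\rho=\|\xi-\zeta\|/t^{1/2}$ one extremizes $\rho\mapsto\rho^{2}e^{-\rho^{2}}$, which is the analogue of the $\rho\mapsto\rho e^{-\rho^{2}}$ computation used in Lemma \ref{coveringpsi}. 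This yields a bound on $\|D_{\xi}\upsilon_{t}(\xi,\zeta)\|$ independent of $\zeta$, and integrating produces $|\upsilon_{t}(\xi,\zeta)-\upsilon_{t}(\xi',\zeta)|\le(\tfrac{2}{e}\|f\|_{\mathrm{Lip}}+\|f\|_{C^{1}})\|\xi-\xi'\|$ (the exact power of $t$ carried along being a matter of bookkeeping in the statement).

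There is essentially no genuine obstacle here: it is the same differentiate-and-extremize recipe already applied twice in this subsection, and the worst one has to watch is keeping track of \emph{which} variable is being differentiated, so that the resulting Lipschitz constant is uniform in the frozen variable $\zeta$ — which is precisely what the subsequent covering-number arguments require. Indeed, just as Corollary \ref{linfinityclass} follows from Lemma \ref{coveringpsi}, this lemma immediately gives a covering-number bound of the form $\mathcal{N}(\{\upsilon_{t}(\cdot,\zeta):\zeta\in\Sigma\},\varepsilon)\le\mathcal{A}(\Sigma,\varepsilon/(\tfrac{2}{e}\|f\|_{\mathrm{Lip}}+\|f\|_{C^{1}}))$, which is how it will be used in the proof of Theorem \ref{extrinsic variance}.
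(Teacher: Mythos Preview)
Your proposal is correct and follows exactly the same approach as the paper's proof: differentiate $\upsilon_{t}$ in the first variable, bound $|f(\xi)-f(\zeta)|$ by $\|f\|_{\mathrm{Lip}}\|\xi-\zeta\|$, extremize the resulting Gaussian-times-polynomial factor, and integrate along the segment. Your computation correctly produces the constant $\tfrac{2}{e}$ (independent of $t$); the $\tfrac{2t}{e}$ in the lemma's statement is a typo, as your parenthetical remark about bookkeeping anticipates.
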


Lemmas \ref{coveringpsi} and \ref{coveringphi} say that we can relate covering
numbers of the classes $\mathcal{F}^{t}_{f,h},\mathcal{G}^{t}$ to ambient
covering numbers of the submanifold $\Sigma$. In order to estimate
$\mathcal{A}(\Sigma,\delta)$ we need to introduce the notion of \emph{reach}
of an embedded submanifold of $\mathbb{R}^{N}$. Recall that for every
$\varepsilon>0$ we can consider the $\varepsilon$ neighborhood of $\Sigma$
\begin{align}
\Sigma_{\varepsilon}=\{x\in\mathbb{R}^{N}:d(x,\Sigma)<\varepsilon\},
\end{align}
where $d(\cdot,\Sigma)$ measures the distance from points in $\mathbb{R}^{N}$
to $\Sigma$ with respect to the ambient norm $\|\cdot\|$. If $\Sigma$ is a
smooth, embedded submanifold of $\mathbb{R}^{N}$, for $\varepsilon>0$
sufficiently small we can define a smooth map $\varphi:\Sigma_{\varepsilon
}\rightarrow\Sigma$ such that

\begin{enumerate}
\item $\varphi$ is smooth,

\item $\varphi(x)$ is the unique point in $\Sigma$ such that $\|\varphi
(x)-x\|=d(x,\Sigma)$ for all $x\in\Sigma_{\varepsilon}$.

\item $x-\varphi(x)\in T^{\perp}_{\varphi(x)}\Sigma$,

\item $\varphi(y+z)\equiv\varphi(y)$ for all $y\in\Sigma$ and $z\in
(T_{y}\Sigma)^{\perp}$ with $\|z\|<\varepsilon$,

\item For any vector $V\in\mathbb{R}^{N}$, $D_{V}\varphi(x)=p_{\varphi(x)}%
(V)$, where $p_{\varphi(x)}(V)$ is the orthogonal projection of $V$ onto
$T_{\varphi(x)}\Sigma$.
\end{enumerate}

See for example \cite[Theorem1]{lsbook}. The map $\varphi$ is called
\emph{nearest point projection} onto $\Sigma$.

\begin{definition}
\label{reachdef} Let $\Sigma$ be an embedded submanifold of $\mathbb{R}^{N}$.
The reach of $\Sigma$ is the number
\begin{align}
\tau=\sup\{{\varepsilon>0} :\text{There exists a nearest point projection
in}~\Sigma_{\varepsilon}\}.
\end{align}

\end{definition}

We quote the following result

\begin{theorem}
[\cite{FMN13}~{Corollary 6}]\label{coveringbound} Suppose that $\Sigma$ is a
$d$-dimensional embedded submanifold of $\mathbb{R}^{N}$ with volume $V$ and
reach $\tau>0$ and let $U:\mathbb{R}_{+}\rightarrow\mathbb{R}_{+}$ be the
function
\begin{align}
U_{V,\tau}(r)=\left(  \frac{1}{\tau^{d}}+r^{d}\right)  V,\label{defU}%
\end{align}
then for any $\varepsilon>0$ there is an $\varepsilon$-net of $\Sigma$ with
respect to the ambient distance $\Vert\cdot\Vert$ of no more than
$C_{d}U(\varepsilon^{-1})$ points where $C_{d}$ is a dimensional constant. In
particular,
\[
\mathcal{A}(\Sigma,\varepsilon)\leq C_{d}U(\varepsilon^{-1}).
\]

\end{theorem}

\begin{corollary}
\label{coveringFG} We have the following bounds

\begin{enumerate}
\item[(a)]
\begin{align}
\mathcal{N}(\mathcal{F}^{t}_{f,h},\varepsilon)\le C_{d}V\left(  \frac{1}%
{\tau^{d}}+\left(  \frac{C(f,h)}{\varepsilon}\right)  ^{d}\right)  ,
\label{N(f,t)}%
\end{align}

\item[(b)]
\begin{align}
\mathcal{N}(\mathcal{G}^{t},\varepsilon)\le C_{d}V\left(  \frac{1}{\tau^{d}%
}+\left(  \frac{1}{e^{1/2}\varepsilon}\right)  ^{d}\right)  , \label{N(G,t)}%
\end{align}

\item[(c)]
\begin{align}
\mathcal{N}(\mathcal{H}^{t}_{h},\varepsilon)\le C_{d}V\left(  \frac{1}%
{\tau^{d}}+\left(  \frac{C}{e^{1/2}\varepsilon}\right)  ^{d}\right)  ,
\label{N(h,t)}%
\end{align}
where $C(f,h)$ in \eqref{N(f,t)} is given by \eqref{C(f,h)} and $C$ in
\eqref{N(h,t)} is an upper bound for $\|h\|_{L^{\infty}(\Sigma)}$.
\end{enumerate}
\end{corollary}

{ }

\begin{remark}
\emph{{ From \eqref{N(f,t)} we can obtain effective bounds for $\mathcal{N}%
(\mathcal{F}^{t}_{f,h},\epsilon)$ in the sense that these bounds depend only
on bounds for $\|f\|_{\mathrm{Lip}},\|h\|_{\mathrm{Lip}}$. } }
\end{remark}

We now prove uniform estimates on the covering number for the collection of
classes $\mathcal{F}^{t}_{f,h}$ and $\mathcal{H}^{t}_{h}$ for $f,h$ in the
class $\mathscr{L}_{M}$ defined in the statement of Theorem
\ref{extrinsic variance}.

{ }

\begin{lemma}
\label{classesFtHt}
Consider the classes of functions
\begin{align}
\mathcal{F}^{t}  &  =\left\{  \phi_{t}(f,h,\xi)(\zeta)=t^{-1/2} e^{-\frac
{\Vert\xi-\zeta\Vert^{2}}{2t}}(f(\xi)-f(\zeta))(h(\xi)-h(\zeta)):\xi\in
\Sigma,f,h\in\mathscr{L}_{M}\right\}  ,\label{defFt}\\
\mathcal{H}^{t}  &  =\left\{  \omega_{t}(h,\xi)(\zeta)= t^{1/2}e^{-\frac
{\|\xi-\zeta\|^{2}}{2t}}h(\zeta):\xi\in\Sigma,h\in\mathscr{L}_{M}\right\}
,\label{defHt}\\
\mathcal{F}_{*}^{t}  &  =\left\{  \phi^{*}_{t}(f,h,\xi)(\zeta)=t^{-1/2}%
e^{-\frac{\Vert\xi-\zeta\Vert^{2}}{2t}}(L_{t}f(\xi)-L_{t}f(\zeta
))(h(\xi)-h(\zeta)):\xi\in\Sigma,f,h\in\mathscr{L}_{M}\right\}
,\label{defF*t}\\
\mathcal{H}_{*}^{t}  &  =\left\{  \omega^{*}_{t}(f,h,\xi)(\zeta)=t^{1/2}%
e^{-\frac{\|\xi-\zeta\|^{2}}{2t}}\Gamma_{t}(f,h)(\zeta):f,h\in\mathscr{L}_{M}%
,\xi\in\Sigma\right\}  . \label{defH*t}%
\end{align}
Then
\begin{align}
\mathcal{N}\left(  \mathcal{F}^{t},\varepsilon\right)   &  \le C_{d}V\left(
\frac{1}{\tau^{d}}+\left(  \frac{3C_{0}M^{2}}{\varepsilon}\right)
^{d}\right)  \left(  \frac{12M^{2}e^{-1/2}C^{*}(\Sigma)+\varepsilon
}{\varepsilon}\right)  ^{2N},\label{sizeFtuniform}\\
\mathcal{N}(\mathcal{H}^{t},\epsilon)  &  \le C_{d}V\left(  \frac{1}{\tau^{d}%
}+\left(  \frac{2C^{*}(\Sigma)e^{-1/2}M}{\varepsilon}\right)  ^{d}\right)
\left(  \frac{4Mt^{1/2}C^{*}(\Sigma)+\varepsilon}{\varepsilon}\right)
^{N},\label{sizeHtuniform}\\
\mathcal{N}(\mathcal{H}^{t}_{*},\epsilon)  &  \le C_{d}U\left(  \frac
{\epsilon}{3M^{2}\beta_{d}}\right)  \left(  \frac{12\alpha_{d}M+\epsilon
}{\epsilon}\right)  ^{2N}, \label{sizeH*}%
\end{align}
and for $t>0$ sufficiently small we have
\begin{align}
\mathcal{N}(\mathcal{F}^{t}_{*},\epsilon)  &  \le C_{d}U\left(  \frac
{\epsilon}{3C_{1}^{*}(\Sigma)M^{2}}\right)  \left(  \frac{24e^{-1/2}%
M^{2}t^{-1}C^{*}(\Sigma)+\varepsilon}{\varepsilon}\right)  ^{N}\left(
\frac{12e^{-1/2}M^{2}C_{1}^{*}(\Sigma)C^{*}(\Sigma)+\epsilon}{\epsilon
}\right)  ^{N}, \label{sizeF*}%
\end{align}
where $U(r)$ is as in \eqref{defU}. In the above inequalities, $C^{*}%
(\Sigma)=\displaystyle{\sup_{\xi\in\Sigma}\{\|\xi\|\}}$, $C^{*}(\Sigma)$ is a
positive constant depending on $\|D^{2}\mathrm{II}_{\Sigma}\|_{L^{\infty
}(\Sigma)}$, $C_{d},\alpha_{d},\beta_{d}$ are dimensional constants, $V$ is
the volume of $\Sigma$ and $C_{0}$ is the universal constant in Lemma
\ref{coveringphi}.
\end{lemma}

\begin{proof}
In order to prove \eqref{sizeFtuniform}, we estimate the difference
\begin{align*}
\left|  \phi_{t}(f,h,\xi)(\zeta)-\phi_{t}(\tilde{f},\tilde{h},\xi^{^{\prime}%
})(\zeta)\right|  ,
\end{align*}
for arbitrary $f,h,\tilde{f},\tilde{h}\in\mathscr{L}_{M}$ and $\xi
,\xi^{^{\prime}}\in\Sigma$. Note that
\begin{align*}
\left|  \phi_{t}(f,h,\xi,\zeta)-\phi_{t}(\tilde{f},\tilde{h},\xi^{^{\prime}%
},\zeta)\right|   &  \le\left|  \phi_{t}(f,h,\xi,\zeta)-\phi_{t}(\tilde
{f},h,\xi,\zeta)\right| \\
&  +\left|  \phi_{t}(\tilde{f},h,\xi,\zeta)-\phi_{t}(\tilde{f},\tilde{h}%
,\xi,\zeta)\right| \\
&  +\left|  \phi_{t}(\tilde{f},\tilde{h},\xi,\zeta)-\phi_{t}(\tilde{f}%
,\tilde{h},\xi^{^{\prime}},\zeta)\right|  .
\end{align*}
Note that
\begin{align*}
\left|  \phi_{t}(f,h,\xi,\zeta)-\phi_{t}(\tilde{f},h,\xi,\zeta)\right|   &
=t^{-1/2}e^{-\frac{\|\xi-\zeta\|^{2}}{2t}}\left|  h(\xi)-h(\zeta)\right|
\cdot\left|  f(\xi)-\tilde{f}(\xi)\right| \\
&  + t^{-1/2}e^{-\frac{\|\xi-\zeta\|^{2}}{2t}}\left|  h(\xi)-h(\zeta)\right|
\cdot|f(\zeta)-\tilde{f}(\zeta)|\\
&  \le2e^{-1/2}M\|f-\tilde{f}\|_{L^{\infty}(\Sigma)}.
\end{align*}
Similarly,
\begin{align*}
\left|  \phi_{t}(\tilde{f},h,\xi,\zeta)-\phi_{t}(\tilde{f},\tilde{h},\xi
,\zeta)\right|  \le2e^{-1/2}M\|h-\tilde{h}\|_{L^{\infty}(\Sigma)}.
\end{align*}
Finally, from Lemma \ref{coveringphi} we obtain
\begin{align*}
\left|  \phi_{t}(\tilde{f},\tilde{h},\xi,\zeta)-\phi_{t}(\tilde{f},\tilde
{h},\xi^{^{\prime}},\zeta)\right|  \le C(\tilde{f},\tilde{h})\|\xi
-\xi^{^{\prime}}\|\le C_{0}M^{2}\|\xi-\xi^{^{\prime}}\|.
\end{align*}
If we now turn to the computation of $L^{\infty}(\Sigma)$ covering numbers, we
obtain the bound
\begin{align}
\label{Nsquarebound}\mathcal{N}(\mathcal{F}^{t},\varepsilon)\le\mathcal{N}%
\left(  \mathscr{L}_{M},\frac{\epsilon}{6Me^{-1/2}}\right)  ^{2}\cdot
\sup_{f,h\in\mathscr{L}_{M}}\mathcal{N}\left(  \mathcal{F}^{t}_{f,h}%
,\frac{\epsilon}{3}\right)
\end{align}
and observe that for any $\delta>0$ we have the bounds
\begin{align}
\mathcal{N}\left(  \mathscr{L}_{M},\delta\right)   &  \le\left(  \frac
{2MC^{*}(\Sigma)+\delta}{\delta}\right)  ^{N},\label{boundLM}\\
\sup_{f,h\in\mathscr{L}_{M}}\mathcal{N}\left(  \mathcal{F}^{t}_{f,h}%
,\delta\right)   &  \le\mathcal{A}\left(  \Sigma,\frac{\delta}{C_{0}M^{2}%
}\right)  ,
\end{align}
where we have used that for any $u\in\mathscr{L}_{M}$ we can write
$u(\xi)=\langle\eta,\xi\rangle$ where $\eta\in B_{M}(0)$, i.e. the ball of
radius $M$ in $\mathbb{R}^{N}$ centered at the origin and $\xi\in\Sigma$, and
therefore $\mathcal{N}\left(  \mathscr{L}_{M},\delta\right)  \le
\mathcal{A}(B_{M}(0),\frac{\delta}{C^{*}(\Sigma)})$. On the other hand, it is
well known that for any $\upsilon>0$
\begin{align*}
\mathcal{A}\left(  B_{M}(0),\upsilon\right)  \le\left(  \frac{2M+\upsilon
}{\upsilon}\right)  ^{N},
\end{align*}
see for example \cite[Chapter 4]{Poll}. In view of Theorem \ref{coveringbound}
and \eqref{Nsquarebound}
\begin{align*}
\mathcal{N}\left(  \mathcal{F}^{t},\varepsilon\right)  \le\left(
\frac{12M^{2}e^{-1/2}C^{*}(\Sigma)+\varepsilon}{\varepsilon}\right)
^{2N}C_{d}V\left(  \frac{1}{\tau^{d}}+\left(  \frac{3C_{0}M^{2}}{\varepsilon
}\right)  ^{d}\right)  ,
\end{align*}
where $C_{d}$ is a dimensional constant, $V$ is the volume of $\Sigma$ and
$\tau$ is the reach of $\Sigma$. In order to prove \eqref{sizeHtuniform}, we
use the estimate
\begin{align}
\left|  \omega_{t}(\zeta,x)h(x)-\omega_{t}(\zeta^{^{\prime}},x)\tilde
{h}(x)\right|  \le\left|  \left(  \omega_{t}(\zeta,x)-\omega_{t}%
(\zeta^{^{\prime}},x)\right)  h(x)\right|  +\omega_{t}(\zeta^{^{\prime}%
},x)\left|  h(x)-\tilde{h}(x)\right| \nonumber\\
\le C^{*}(\Sigma)M e^{-1/2}\|x-x^{^{\prime}}\|+t^{1/2}\|h-\tilde
{h}\|_{L^{\infty}(\Sigma)}. \label{psihestimate}%
\end{align}
In order to obtain \eqref{psihestimate} we have used inequality
\eqref{bracketpsi}, $\|\psi_{t}\|_{L^{\infty}(\Sigma\times\Sigma)}=t^{1/2}$
and $\displaystyle{\sup_{h\in\mathscr{L}_{M}}\|h\|_{L^{\infty}(\Sigma)}\le
MC^{*}(\Sigma)}$. It follows that
\begin{align*}
\mathcal{N}\left(  \mathcal{H}^{t},\epsilon\right)   &  \le\mathcal{A}\left(
\Sigma,\frac{\varepsilon}{2C^{*}(\Sigma)e^{-1/2}M}\right)  \cdot
\mathcal{N}\left(  \mathscr{L}_{M},\frac{\varepsilon}{2t^{1/2}}\right) \\
&  \le\mathcal{A}\left(  \Sigma,\frac{\varepsilon}{2C^{*}(\Sigma)e^{-1/2}%
M}\right)  \cdot\mathcal{A}\left(  B_{M}(0),\frac{\varepsilon}{2t^{1/2}%
C^{*}(\Sigma)}\right) \\
&  \le C_{d}V\left(  \frac{1}{\tau^{d}}+\left(  \frac{2C^{*}(\Sigma)e^{-1/2}%
M}{\epsilon}\right)  ^{d}\right)  \left(  \frac{4Mt^{1/2}C^{*}(\Sigma
)+\epsilon}{\epsilon}\right)  ^{N},
\end{align*}
as claimed. For the proof of \eqref{sizeH*} we observe that
\begin{align*}
\left|  \omega^{*}_{t}(f,h,\xi)(\zeta)-\omega^{*}_{t}(\tilde{f},\tilde{h}%
,\xi^{^{\prime}})(\zeta)\right|   &  \le\left|  \omega^{*}_{t}(f,h,\xi
)(\zeta)-\omega^{*}_{t}(\tilde{f},h,\xi)(\zeta)\right| \\
&  +\left|  \omega^{*}_{t}(\tilde{f},h,\xi,\zeta)-\omega^{*}_{t}(\tilde
{f},\tilde{h},\xi,\zeta)\right| \\
&  +\left|  \omega^{*}_{t}(\tilde{f},\tilde{h},\xi)(\zeta)-\omega^{*}%
_{t}(\tilde{f},\tilde{h},\xi^{^{\prime}})(\zeta)\right|  .
\end{align*}
Observe that
\begin{align}
\left|  \omega^{*}_{t}(f,h,\xi)(\zeta)-\omega^{*}_{t}(\tilde{f},h,\xi
)(\zeta)\right|  \le\frac{2\|f-\tilde{f}\|_{L^{\infty}(\Sigma)}e^{-\frac
{\|\xi-\zeta\|^{2}}{2t}}}{t^{1/2}}\int_{\Sigma}e^{-\frac{\|\eta-\xi\|^{2}}%
{2t}}\left|  h(\eta)-h(\xi)\right|  d\mu(\eta).
\end{align}
It follows that
\begin{align}
\label{omega*1}\left|  \omega^{*}_{t}(f,h,\xi)(\zeta)-\omega^{*}_{t}(\tilde
{f},h,\xi)(\zeta)\right|  \le\alpha_{d}\|f-\tilde{f}\|_{L^{\infty}(\Sigma
)}\|h\|_{\mathrm{Lip}}\le2\alpha_{d}M\|f-\tilde{f}\|_{L^{\infty}(\Sigma)},
\end{align}
for some dimensional constant $\alpha_{d}$. Analogously we have
\begin{align}
\label{omega*2}\left|  \omega^{*}_{t}(\tilde{f},h,\xi,\zeta)-\omega^{*}%
_{t}(\tilde{f},\tilde{h},\xi,\zeta)\right|  \le2\alpha_{d}M\|h-\tilde
{h}\|_{L^{\infty}(\Sigma)}.
\end{align}
We also have
\begin{align}
\left|  \omega^{*}_{t}(\tilde{f},\tilde{h},\xi)(\zeta)-\omega^{*}_{t}%
(\tilde{f},\tilde{h},\xi^{^{\prime}})(\zeta)\right|   &  \le\frac
{e^{-1/2}\|\xi-\xi^{^{\prime}}\|}{t\theta_{t}(\xi)}\int_{\Sigma}%
e^{-\frac{\|\xi-\eta\|^{2}}{2t}}\left|  \tilde{f}(\eta)-\tilde{f}(\xi)\right|
\left|  \tilde{h}(\eta)-\tilde{h}(\xi)\right|  d\mu(\eta)\\
&  \le\beta_{d}e^{-1/2}M^{2}\|\xi-\xi^{^{\prime}}\|. \label{omega*3}%
\end{align}
Combining \eqref{omega*1}, \eqref{omega*2} and \eqref{omega*3} with an
argument similar to the one used to prove \eqref{sizeFtuniform} can be used to
prove \eqref{sizeH*}. For the proof \eqref{sizeF*}, we start with the
estimate
\begin{align*}
\left|  \phi^{*}_{t}(f,h,\xi)(\zeta)-\phi^{*}_{t}(\tilde{f},\tilde{h}%
,\xi^{^{\prime}})(\zeta)\right|   &  \le\left|  \phi^{*}_{t}(f,h,\xi
)(\zeta)-\phi^{*}_{t}(\tilde{f},h,\xi)(\zeta)\right| \\
&  +\left|  \phi^{*}_{t}(\tilde{f},h,\xi,\zeta)-\phi^{*}_{t}(\tilde{f}%
,\tilde{h},\xi,\zeta)\right| \\
&  +\left|  \phi^{*}_{t}(\tilde{f},\tilde{h},\xi)(\zeta)-\phi^{*}_{t}%
(\tilde{f},\tilde{h},\xi^{^{\prime}})(\zeta)\right|  .
\end{align*}
A simple estimate shows that
\begin{align}
\label{phi*1}\left|  \phi^{*}_{t}(f,h,\xi)(\zeta)-\phi^{*}_{t}(\tilde{f}%
,h,\xi)(\zeta)\right|  \le\frac{2Me^{-1/2}\|f-\tilde{f}\|_{L^{\infty}(\Sigma
)}}{t}.
\end{align}
Similarly
\begin{align}
\label{phi*2}\left|  \phi^{*}_{t}(\tilde{f},h,\xi,\zeta)-\phi^{*}_{t}%
(\tilde{f},\tilde{h},\xi,\zeta)\right|  \le2e^{-1/2}\|h-\tilde{h}%
\|_{L^{\infty}(\Sigma)}\|L_{t}\tilde{f}\|_{\mathrm{Lip}}.
\end{align}
Observe now that $\tilde{f}$ is a linear function that can be written as
$\tilde{f}(x)=\langle\eta,x\rangle$ for some $\eta$ with $\|\eta\|\le M$, and
therefore
\begin{align*}
\nabla^{2}_{\Sigma}\tilde{f}(x)=-\eta^{\perp}\mathrm{II}_{\Sigma}(x),
\end{align*}
where $\eta^{\perp}$ is the normal component of $\eta$ with respect to
$T_{x}\Sigma$. In particular, \eqref{Ltexpand} shows that
\begin{align*}
L_{t}\tilde{f}(x)=-\eta^{\perp}H_{\Sigma}(x)+R_{t}(x),
\end{align*}
where $H_{\Sigma}(x)$ is the mean curvature of $\Sigma$ at $x$ and $R(x)$ has
a bound of the form $\|R_{t}(x)\|_{L^{\infty}(\Sigma)}\le t^{1/2}%
M\|\nabla_{\Sigma}\mathrm{II}_{\Sigma}\|_{L^{\infty}(\Sigma)}$. It follows
that for $t>0$ small enough we have a bound of the form
\begin{align}
\label{Ltf}\|L_{t}\tilde{f}\|_{\mathrm{Lip}}\le MC^{*}_{1}(\Sigma).
\end{align}
We also have
\begin{align}
\left|  \phi^{*}_{t}(\tilde{f},\tilde{h},\xi)(\zeta)-\phi^{*}_{t}(\tilde
{f},\tilde{h},\xi^{^{\prime}})(\zeta)\right|   &  \le C_{0}\|L_{t}\tilde
{f}\|_{\mathrm{Lip}}\|\tilde{h}\|_{\mathrm{Lip}}\|\xi-\xi^{^{\prime}%
}\|\nonumber\\
&  \le C_{0}M^{2}C^{*}_{1}(\Sigma)\|\xi-\xi^{^{\prime}}\|. \label{phi*3}%
\end{align}
Combining \eqref{phi*1},\eqref{phi*2},\eqref{Ltf} and \eqref{phi*3},
inequality \eqref{sizeF*} follows easily (with an argument similar to the one
used to prove \eqref{sizeFtuniform}).

\end{proof}

\begin{lemma}
\label{newmaxlemma} The classes \eqref{defFt},\eqref{defHt},\eqref{defF*t} and
\eqref{defH*t} satisfy the following bounds
\begin{align}
\mathscr{M}_{\mathcal{F}^{t}}  &  =\sup_{u\in\mathcal{F}^{t}}\|u\|_{L^{\infty
}(\Sigma)}\le2t^{1/2}e^{-1}M^{2},\label{maxFt}\\
\mathscr{M}_{\mathcal{H}^{t}}  &  =\sup_{v\in\mathcal{H}^{t}}\|v\|_{L^{\infty
}(\Sigma)}\le t^{1/2}MC^{*}(\Sigma),\label{maxHt}\\
\mathscr{M}_{\mathcal{H}_{*}^{t}}  &  =\sup_{v\in\mathcal{H}_{*}^{t}%
}\|v\|_{L^{\infty}(\Sigma)}\le\beta_{d}M^{2} t^{1/2}. \label{maxH*t}%
\end{align}
In addition, for $t>0$ sufficiently small we have
\begin{align}
\mathscr{M}_{\mathcal{F}_{*}^{t}}\le t^{1/2}C^{*}_{1}(\Sigma)M^{2},
\label{maxF*t}%
\end{align}
where $C^{*}(\Sigma),C^{*}_{1}(\Sigma)$ and $\beta_{d}$ are as in Lemma
\ref{classesFtHt}.
\end{lemma}

\begin{convention}
\label{Cconvention} \emph{{ From now on, we will use the following convention
}}

\begin{itemize}
\item \emph{We will use $C^{*}(\Sigma)$ to denote the number
\begin{align*}
\sup_{\xi\in\Sigma}\{\|\xi\|\},
\end{align*}
up to multiplication by dimensional constants that may change from line line.
\newline}

\item \emph{We will use $C^{*}_{1}(\Sigma)$ to denote a constant that depends
on $L^{\infty}(\Sigma)$ bounds on $\mathrm{II}_{\Sigma}$ (second fundamental
form of $\Sigma$) and its derivatives. }
\end{itemize}

\emph{ }
\end{convention}

\subsection{Sample Version of the Carr\'{e} du Champ}

\label{secsamplecdc}

In this section we are still assuming that the distribution of the sample
$\xi_{1},\ldots,\xi_{n}$ in $\Sigma$ is \emph{uniform}, i.e., $d\mu
=d\mathrm{vol}_{\Sigma}$. In this case we know that $\lim_{t\rightarrow
0}t^{-d/2}\theta_{t}=(2\pi)^{d/2}$ uniformly in $L^{\infty}(\Sigma)$ and
therefore there exists $t_{0}>0$ such that for $0<t<t_{0}$ we have
\begin{align*}
2(2\pi)^{d/2}\ge\theta_{t}\ge\frac{1}{2}(2\pi)^{d/2}.
\end{align*}
If we let
\begin{align}
\lambda_{0}=\frac{(2\pi)^{d/2}}{4},
\end{align}
we have for $0< t<t_{0}$ the inequality
\begin{align}
\label{lowlambda0}\theta_{t}(x)\ge2t^{d/2}\lambda_{0},
\end{align}
which will be a convenient normalization for us in the sequel. The main lemma
in this section is the following.

{ }

\begin{lemma}
\label{lemmaGC}Let $\mathcal{J}$ be given by a class of functions of the form
\[
\varphi(f,x)(\cdot)=f(x,\cdot)
\]
for $x \in\Sigma$ and $f\in\mathcal{F}$ where $\mathcal{F}$ is a totally
bounded class of functions in $L^{\infty}(\Sigma\times\Sigma)$. Let
\[
C=\sup_{f\in\mathcal{F}}{\Vert f\Vert_{L^{\infty}(\Sigma\times\Sigma)}.}%
\]
Suppose also $0<t<t_{0}$ and $\varepsilon$ is small enough so that
\begin{equation}
\varepsilon t^{d+1/2}\lambda_{0}<C. \label{cond351}%
\end{equation}
Then $\mathcal{J}$ is totally bounded in $L^{\infty}(\Sigma)$ and
\begin{align}
&  {\Pr}^{\ast}\left\{  \sup_{f\in\mathcal{F}}\sup_{x\in\Sigma}\left\vert
t^{-1/2}\frac{\mu_{n}f(x,\cdot)}{\hat{\theta}_{t}(x)}-t^{-1/2}\frac{\mu
f(x,\cdot)}{\theta_{t}(x)}\right\vert \geq\varepsilon\right\} \label{GC1}\\
&  \leq2\mathcal{N}\left(  \mathcal{G}^{t},\frac{\varepsilon t^{d+1}%
\lambda_{0}^{2}}{4C}\right)  \exp\left(  -\frac{\varepsilon^{2}t^{2d+1}%
\lambda_{0}^{4}}{8C^{2}}n\right) \label{GC2}\\
&  2\mathcal{N}\left(  \mathcal{F},\frac{\varepsilon\lambda_{0}t^{(d+1)/2}}%
{4}\right)  \exp\left(  -\frac{\varepsilon^{2}\lambda_{0}^{2}t^{d+1}n}{8C^{2}%
}\right)  .
\end{align}

\end{lemma}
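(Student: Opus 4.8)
The plan is to reduce the statement to the uniform law of large numbers of Lemma~\ref{equicontinuity}, applied separately to the class $\mathcal{F}$ and to the Gaussian class $\mathcal{G}^{t}$, once the random denominator $\hat{\theta}_{t}(x)$ has been isolated and bounded below. First I would record the elementary identity, valid for each $x\in\Sigma$,
\[
t^{-1/2}\frac{\mu_{n}f(x,\cdot)}{\hat{\theta}_{t}(x)}-t^{-1/2}\frac{\mu f(x,\cdot)}{\theta_{t}(x)}
= t^{-1/2}\frac{\mu_{n}f(x,\cdot)-\mu f(x,\cdot)}{\hat{\theta}_{t}(x)}
+ t^{-1/2}\,\mu f(x,\cdot)\,\frac{\theta_{t}(x)-\hat{\theta}_{t}(x)}{\hat{\theta}_{t}(x)\theta_{t}(x)},
\]
so that, using $|\mu f(x,\cdot)|\le M$,
\[
t^{-1/2}\Big|\frac{\mu_{n}f(x,\cdot)}{\hat{\theta}_{t}(x)}-\frac{\mu f(x,\cdot)}{\theta_{t}(x)}\Big|
\le t^{-1/2}\frac{|\mu_{n}f(x,\cdot)-\mu f(x,\cdot)|}{\hat{\theta}_{t}(x)}
+ t^{-1/2}\frac{M\,|\hat{\theta}_{t}(x)-\theta_{t}(x)|}{\hat{\theta}_{t}(x)\theta_{t}(x)} .
\]
The problem thus splits into controlling, uniformly in $x$, the numerator fluctuation $\sup_{x}|\mu_{n}f(x,\cdot)-\mu f(x,\cdot)|$ and the denominator fluctuation $\sup_{x}|\hat{\theta}_{t}(x)-\theta_{t}(x)|$, granted a lower bound on $\hat{\theta}_{t}$.

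The crucial step is that lower bound on the random denominator. Note that $\hat{\theta}_{t}(x)=t^{-1/2}\mu_{n}\psi_{t}(x,\cdot)$ and $\theta_{t}(x)=t^{-1/2}\mu\psi_{t}(x,\cdot)$ with $\psi_{t}(x,\cdot)\in\mathcal{G}^{t}$, so that
\[
\sup_{x\in\Sigma}\big|\hat{\theta}_{t}(x)-\theta_{t}(x)\big|=t^{-1/2}\sup_{\psi\in\mathcal{G}^{t}}\big|\mu_{n}\psi-\mu\psi\big| .
\]
By \eqref{lowlambda0} we have $\theta_{t}(x)\ge 2t^{d/2}\lambda_{0}$ for all $x$, and the smallness hypothesis \eqref{cond351} is exactly what guarantees that, on the event where $\sup_{\psi\in\mathcal{G}^{t}}|\mu_{n}\psi-\mu\psi|$ lies below the threshold appearing in \eqref{GC2}, the corresponding deviation $\sup_{x}|\hat{\theta}_{t}-\theta_{t}|$ does not exceed $t^{d/2}\lambda_{0}$; hence on that event $\hat{\theta}_{t}(x)\ge t^{d/2}\lambda_{0}>0$ for every $x$, and both $1/\hat{\theta}_{t}(x)$ and $1/(\hat{\theta}_{t}(x)\theta_{t}(x))$ are bounded above by fixed powers of $t^{-d/2}\lambda_{0}^{-1}$.

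Finally I would invoke the deviation inequality of Lemma~\ref{equicontinuity} twice. Its proof uses only total boundedness in $L^{\infty}(\Sigma)$, which is assumed for $\mathcal{F}$, so it applies to $\mathcal{F}$ with sup-norm bound $M$ and a threshold of order $\varepsilon\lambda_{0}t^{(d+1)/2}$, giving the second term of the asserted bound as the probability of the event $B_{2}=\{\sup_{x}|\mu_{n}f(x,\cdot)-\mu f(x,\cdot)|\ge\varepsilon\lambda_{0}t^{(d+1)/2}\}$. For the denominator, $\mathcal{G}^{t}$ is equicontinuous by Lemma~\ref{coveringpsi} and has sup-norm bound $t^{1/2}$ by \eqref{boundGt}, so Lemma~\ref{equicontinuity} applies with a threshold of order $\varepsilon t^{d+1}\lambda_{0}^{2}/M$, giving the first term as the probability of $B_{1}=\{\sup_{\psi\in\mathcal{G}^{t}}|\mu_{n}\psi-\mu\psi|\ge\varepsilon t^{d+1}\lambda_{0}^{2}/M\}$ (dividing this threshold by the sup-norm bound $t^{1/2}$ is where the exponent $\varepsilon^{2}t^{2d+1}\lambda_{0}^{4}/8M^{2}$ in \eqref{GC2} originates). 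On the complement $B_{1}^{c}\cap B_{2}^{c}$ the denominator lower bound of the previous paragraph holds, and the two pointwise estimates above bound the supremum in \eqref{GC1} by a fixed multiple of $\varepsilon$; apportioning the numerical constants in the two thresholds so that this multiple is $\le 1$ then shows that the event in \eqref{GC1} is contained in $B_{1}\cup B_{2}$, and a union bound finishes the proof.

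I expect the main obstacle to be exactly this random denominator $\hat{\theta}_{t}(x)$: one cannot apply a law of large numbers directly to the ratio, and it is the hypotheses \eqref{lowlambda0} and \eqref{cond351} that make $1/\hat{\theta}_{t}$ controllable with high probability. This is also the source of the unwieldy powers of $t$ in \eqref{GC2}: two factors of $t^{d/2}$ are lost to the product $\hat{\theta}_{t}\theta_{t}$ in the denominator and one further factor of $t^{1/2}$ to the normalization relating $\mathcal{G}^{t}$ to $\theta_{t}$. A minor point is that Lemma~\ref{equicontinuity} is phrased for equicontinuous classes while $\mathcal{F}$ is here only assumed totally bounded; this causes no difficulty since its proof never uses more than total boundedness.
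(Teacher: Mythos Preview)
Your overall strategy—split the ratio, control the numerator fluctuation by Lemma~\ref{equicontinuity} applied to $\mathcal{F}$, and control the denominator fluctuation by Lemma~\ref{equicontinuity} applied to $\mathcal{G}^{t}$—is the same as the paper's. The difference is in the algebraic decomposition, and that difference creates a small but genuine gap in your argument.

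You write
\[
\frac{\mu_{n}f}{\hat\theta_{t}}-\frac{\mu f}{\theta_{t}}
=\frac{\mu_{n}f-\mu f}{\hat\theta_{t}}+\mu f\cdot\frac{\theta_{t}-\hat\theta_{t}}{\hat\theta_{t}\theta_{t}},
\]
so \emph{both} terms carry the random $\hat\theta_{t}$ in the denominator, and you then claim that \eqref{cond351} ``is exactly what guarantees'' $\hat\theta_{t}(x)\ge t^{d/2}\lambda_{0}$ on $B_{1}^{c}$. This is not quite right. On $B_{1}^{c}$ (with the threshold $\varepsilon t^{d+1}\lambda_{0}^{2}/M$ for $\sup_{\psi\in\mathcal{G}^{t}}|\mu_{n}\psi-\mu\psi|$) one gets
\[
\sup_{x}|\hat\theta_{t}(x)-\theta_{t}(x)|\le t^{-1/2}\cdot\frac{\varepsilon t^{d+1}\lambda_{0}^{2}}{M}=\frac{\varepsilon t^{d+1/2}\lambda_{0}^{2}}{M},
\]
and \eqref{cond351} says this is $<\lambda_{0}$, not $<t^{d/2}\lambda_{0}$. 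To force $|\hat\theta_{t}-\theta_{t}|\le t^{d/2}\lambda_{0}$ you would need $\varepsilon t^{1/2}\lambda_{0}\le M$, which for $t<1$ and $d\ge 1$ is strictly stronger than \eqref{cond351}. So with the thresholds you have chosen (which are the ones appearing in the statement), the lower bound on $\hat\theta_{t}$ is not justified, and the ``apportioning the numerical constants'' remark cannot repair a wrong power of $t$.

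The paper sidesteps this issue by using instead the decomposition
\[
\frac{\mu_{n}f}{\hat\theta_{t}}-\frac{\mu f}{\theta_{t}}
=\mu_{n}f\Bigl(\frac{1}{\hat\theta_{t}}-\frac{1}{\theta_{t}}\Bigr)+\frac{1}{\theta_{t}}\bigl(\mu_{n}f-\mu f\bigr),
\]
so the second term has only the \emph{deterministic} $\theta_{t}$ in the denominator and is handled directly by \eqref{lowlambda0}. For the first term one uses the elementary Lemma~\ref{manipulation1},
\[
\Pr\Bigl\{\Bigl|\frac{1}{\hat\theta_{t}}-\frac{1}{\theta_{t}}\Bigr|\ge\delta\Bigr\}
\le\Pr\Bigl\{|\hat\theta_{t}-\theta_{t}|\ge\frac{\delta\theta_{t}^{2}}{1+\delta\theta_{t}}\Bigr\},
\]
which converts the reciprocal deviation into a plain deviation using only the deterministic lower bound $\theta_{t}\ge 2t^{d/2}\lambda_{0}$; condition \eqref{cond351} then enters precisely to simplify the resulting denominator $M+\varepsilon t^{d+1/2}\lambda_{0}$ to $2M$. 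This is what produces the exact constants and powers of $t$ in \eqref{GC2} without ever needing $\hat\theta_{t}>0$ a priori. Your decomposition can be made to work too, but it requires either a stronger smallness hypothesis than \eqref{cond351} or an additional event (and hence a third term in the bound) controlling $\{\hat\theta_{t}<t^{d/2}\lambda_{0}\}$.
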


Before proving Lemma \ref{lemmaGC} we will prove the following elementary lemma.

\begin{lemma}
\label{manipulation1} Let $\xi,\zeta$ be positive random variables. For any
$\varepsilon>0$ we have
\begin{align*}
\Pr\left\{  \left|  \frac{1}{\xi}-\frac{1}{\zeta}\right|  \ge\varepsilon
\right\}  \le\Pr\left\{  |\zeta-\xi\vert\geq\frac{\varepsilon\xi^{2}%
}{1+\varepsilon\xi}\right\}  .
\end{align*}

\end{lemma}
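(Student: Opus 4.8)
The plan is to deduce the inequality from a deterministic, pointwise inclusion of events together with monotonicity of $\Pr$. Fix an outcome $\omega$ and abbreviate $\xi=\xi(\omega)>0$, $\zeta=\zeta(\omega)>0$. Since both are positive we may write $\left|\frac{1}{\xi}-\frac{1}{\zeta}\right|=\frac{|\zeta-\xi|}{\xi\zeta}$, so that the event $\left\{\left|\frac{1}{\xi}-\frac{1}{\zeta}\right|\ge\varepsilon\right\}$ coincides with $\left\{|\zeta-\xi|\ge\varepsilon\,\xi\zeta\right\}$. It therefore suffices to show that $|\zeta-\xi|\ge\varepsilon\,\xi\zeta$ forces $|\zeta-\xi|\ge\dfrac{\varepsilon\xi^{2}}{1+\varepsilon\xi}$, after which taking probabilities gives the lemma.

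For this implication I would use the trivial bound $\zeta\ge\xi-|\zeta-\xi|$, valid for any reals. Substituting it into $|\zeta-\xi|\ge\varepsilon\,\xi\zeta$ yields $|\zeta-\xi|\ge\varepsilon\xi\bigl(\xi-|\zeta-\xi|\bigr)$, hence $|\zeta-\xi|\bigl(1+\varepsilon\xi\bigr)\ge\varepsilon\xi^{2}$, and dividing by the positive number $1+\varepsilon\xi$ gives the desired bound. If one wishes to avoid worrying about whether $\xi-|\zeta-\xi|$ is negative, one can split into two cases: if $\zeta\ge\xi$ then $|\zeta-\xi|\ge\varepsilon\xi\zeta\ge\varepsilon\xi^{2}\ge\dfrac{\varepsilon\xi^{2}}{1+\varepsilon\xi}$ outright, while if $\zeta<\xi$ then $\zeta=\xi-|\zeta-\xi|$ and the algebra above is an exact rearrangement.

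This establishes the event inclusion $\left\{\left|\frac{1}{\xi}-\frac{1}{\zeta}\right|\ge\varepsilon\right\}\subseteq\left\{|\zeta-\xi|\ge\dfrac{\varepsilon\xi^{2}}{1+\varepsilon\xi}\right\}$, and monotonicity of probability finishes the proof. There is no genuine obstacle here; the only mild subtlety is the sign of $\xi-|\zeta-\xi|$, which the two-case split handles cleanly. This elementary lemma is precisely what is needed to convert the denominator control on $\hat{\theta}_{t}$ into the form used in the proof of Lemma \ref{lemmaGC}.
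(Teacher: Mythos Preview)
Your proof is correct and follows essentially the same approach as the paper: both establish the pointwise event inclusion by algebraic manipulation of $\left|\frac{1}{\xi}-\frac{1}{\zeta}\right|=\frac{|\zeta-\xi|}{\xi\zeta}$ and then rearrange to isolate $|\zeta-\xi|$. The paper splits into the cases $\zeta<\xi$ and $\xi<\zeta$ from the outset, whereas your use of the universal bound $\zeta\ge\xi-|\zeta-\xi|$ handles both cases at once and is slightly cleaner.
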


\begin{proof}
Assume $0<\zeta<\xi$
\begin{align*}
\frac{1}{\zeta}-\frac{1}{\xi}  &  =\frac{\xi-\zeta}{\xi\zeta}=\frac{\xi-\zeta
}{\xi\zeta-\xi^{2}+\xi^{2}}\\
&  =\frac{\zeta-\xi}{\xi\left(  \zeta-\xi\right)  +\xi^{2}}\\
&  =\frac{|\zeta-\xi|}{-\xi|\zeta-\xi|+\xi^{2}},
\end{align*}
and from
\[
\frac{\left\vert \zeta-\xi\right\vert }{-\xi\left\vert \zeta-\xi\right\vert
+\xi^{2}}\leq\varepsilon
\]
we have
\[
\left\vert \zeta-\xi\right\vert \leq\frac{\varepsilon\xi^{2}}{1+\varepsilon
\xi}.
\]
For the case $0<\xi<\zeta$ we have
\[
\frac{1}{\xi}-\frac{1}{\zeta}=\frac{\zeta-\xi}{\xi(\zeta-\xi)+\xi^{2}}%
=\frac{|\xi-\zeta|}{\xi|\xi-\zeta|+\xi^{2}}.
\]
and $\frac{|\xi-\zeta|}{\xi|\xi-\zeta|+\xi^{2}}\geq\varepsilon$ implies
\[
(1+\varepsilon\xi)|\xi-\zeta|\geq(1-\varepsilon\xi)|\xi-\zeta|\geq
\varepsilon\xi^{2}.
\]

\end{proof}

\begin{proof}
[Proof of Lemma \ref{lemmaGC}]It is easy to prove from Lemma
\ref{manipulation1} and \eqref{lowlambda0} that for any $\delta>0$ we have%

\begin{align}
&  \Pr\left\{  \sup_{x\in\Sigma}\left\vert \frac{1}{\hat{\theta}_{t}(x)}%
-\frac{1}{\theta_{t}(x)}\right\vert \geq\delta\right\} \label{probtheta1}\\
&  \leq\Pr\left\{  \sup_{x\in\Sigma}\left\vert \theta_{t}(x)-\hat{\theta}%
_{t}(x)\right\vert \geq\frac{4\delta t^{d}\lambda_{0}^{2}}{1+2\delta
t^{d/2}\lambda_{0}}\right\}  . \label{probtheta2}%
\end{align}

Let us write
\begin{align*}
t^{-1/2}\left(  \frac{\mu_{n}f(x,\cdot)}{\hat{\theta}_{t}(x)}-\frac{\mu
f(x,\cdot)}{\theta_{t}(x)}\right)   &  =t^{-1/2}\mu_{n}f(x,\cdot)\left(
\frac{1}{\hat{\theta}_{t}(x)}-\frac{1}{\theta_{t}(x)}\right) \\
&  +t^{-1/2}\frac{1}{\theta_{t}(x)}\left[  \mu_{n}f(x,\cdot)-\mu
f(x,\cdot)\right]  .
\end{align*}
Thus
\begin{align*}
&  \Pr\left\{  t^{-1/2}\sup_{f\in\mathcal{F}}\sup_{x\in\Sigma}\left\vert
\frac{\mu_{n}f(x,\cdot)}{\hat{\theta}_{t}(x)}-\frac{\mu f(x,\cdot)}{\theta
_{t}(x)}\right\vert \geq\varepsilon\right\} \\
&  \leq\Pr\left\{  \sup_{x\in\Sigma}\left\vert \frac{1}{\hat{\theta}_{t}%
(x)}-\frac{1}{\theta_{t}(x)}\right\vert \geq\frac{\varepsilon}{2}\frac
{t^{1/2}}{C}\right\} \\
&  +\Pr\left\{  \sup_{f\in\mathcal{F}}\sup_{x\in\Sigma}\left\vert \mu
_{n}f(x,\cdot)-\mu f(x,\cdot)\right\vert \geq\varepsilon\lambda_{0}%
t^{(d+1)/2}\right\}  .
\end{align*}

Analyzing the first term using \eqref{cond351} and
\eqref{probtheta1}-\eqref{probtheta2} leads us to the inequality
\begin{align*}
&  \Pr\left\{  \sup_{x\in\Sigma}\left\vert \frac{1}{\hat{\theta}_{t}(x)}%
-\frac{1}{\theta_{t}(x)}\right\vert \geq\frac{\varepsilon}{2}\frac{t^{1/2}}%
{C}\right\} \\
&  \leq\Pr\left\{  \sup_{x\in\Sigma}\left\vert \theta_{t}(x)-\hat{\theta}%
_{t}(x)\right\vert \geq\frac{2\varepsilon t^{d+1/2}\lambda_{0}^{2}%
}{C+\varepsilon t^{d+1/2}\lambda_{0}}\right\} \\
&  \leq\Pr\left\{  \sup_{x\in\Sigma}\left\vert \theta_{t}(x)-\hat{\theta}%
_{t}(x)\right\vert \geq\frac{\varepsilon t^{d+1/2}\lambda_{0}^{2}}{C}\right\}
\\
\leq &  2\mathcal{N}\left(  \mathcal{G}^{t},\frac{\varepsilon t^{d+1}%
\lambda_{0}^{2}}{4C}\right)  \exp\left(  -\frac{\varepsilon^{2}t^{2d+1}%
\lambda_{0}^{4}}{8C^{2}}n\right)  ,
\end{align*}
where we have applied Lemma \ref{equicontinuity} to the class $\mathcal{G}%
^{t}$ (in particular we have used \eqref{boundGt}). Finally,
\begin{align*}
&  \Pr\left\{  \sup_{f\in\mathcal{F}}\sup_{x\in\Sigma}\left\vert \mu
_{n}f(x,\cdot)-\mu f(x,\cdot)\right\vert \geq\varepsilon\lambda_{0}%
t^{(d+1)/2}\right\} \\
&  \leq2\mathcal{N}\left(  \mathcal{F},\frac{\varepsilon\lambda_{0}%
t^{(d+1)/2}}{4}\right)  \exp\left(  -\frac{\varepsilon^{2}\lambda_{0}%
^{2}t^{(d+1)}n}{8C^{2}}\right)  .
\end{align*}

\end{proof}

In view of Lemma \ref{lemmaGC}, we introduce the following notation

\begin{definition}
Given a class of functions $\mathcal{F}$ as in the statement of Lemma
\ref{lemmaGC} and positive numbers $t,\varepsilon,M$ we define for compactness
of notation the following function
\begin{align}
Q_{t}(\mathcal{F},\varepsilon,C,n)  &  =2\mathcal{N}\left(  \mathcal{G}%
^{t},\frac{\varepsilon t^{d+1}\lambda_{0}^{2}}{4t^{1/2}C}\right)  \exp\left(
-\frac{\varepsilon^{2}t^{2d+1}\lambda_{0}^{4}}{8\left(  t^{1/2}C\right)  ^{2}%
}n\right) \label{Qdef}\\
&  +2\mathcal{N}\left(  \mathcal{F},\frac{\varepsilon\lambda_{0}t^{(d+1)/2}%
}{4}\right)  \exp\left(  -\frac{\varepsilon^{2}\lambda_{0}^{2}t^{d+1}%
n}{8\left(  t^{1/2}C\right)  ^{2}}\right)  .
\end{align}

\end{definition}

\begin{remark}
Because each one of theese function classes satisfies a bound of the form
(\ref{boundFt}), we include the $t^{1/2}$ factor in the expression for $C.$ \ 
\end{remark}

\begin{corollary}
If $\mathcal{F}$ is any of the classes of functions defined by \eqref{Ft},
\eqref{Gt}, \eqref{Ht}, \eqref{defFt},\eqref{defHt}, \eqref{defF*t},
\eqref{defF*t} we have%
\begin{equation}
{\Pr}^{\ast}\left\{  \sup_{x\in\Sigma}\left\vert t^{-1/2}\frac{\mu
_{n}f(x,\cdot)}{\hat{\theta}_{t}(x)}-t^{-1/2}\frac{\mu f(x,\cdot)}{\theta
_{t}(x)}\right\vert \geq\varepsilon\right\}  \leq Q_{t}(\mathcal{F}%
,\varepsilon,C,n).
\end{equation}
For \eqref{Ft}, the constant $C$ depends on $\Sigma$, $\left\Vert f\right\Vert
_{\mathrm{Lip}}$ and $\left\Vert h\right\Vert _{\mathrm{Lip}}$. \ For
\eqref{Gt}, $C$ depends on $\Sigma.$ \ For \eqref{Ht}, $C$ depends on
$\left\Vert h\right\Vert _{\infty}$ and $\Sigma.$ For \eqref{defFt},
\eqref{defHt}, \eqref{defF*t} and \eqref{defH*t}, $C$ depends on $\Sigma$, $M$
and dimensional constants.
\end{corollary}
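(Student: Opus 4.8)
The plan is to read the corollary off as a direct specialization of Lemma~\ref{lemmaGC} to the three concrete classes $\mathcal{F}^{t}_{f,h}$, $\mathcal{G}^{t}$ and $\mathcal{H}^{t}_{h}$. The only real content is to identify the constant $M$ of Lemma~\ref{lemmaGC} with an explicit multiple of $t^{1/2}$ for each class, and then to observe that inserting this value into the conclusion of Lemma~\ref{lemmaGC} reproduces, term by term, the definition \eqref{Qdef} of $Q_{t}$.

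First I would check that each of the three families has the form required by Lemma~\ref{lemmaGC}: each is a family of functions on $\Sigma$ indexed by a point $x\in\Sigma$, arising from a fixed kernel on $\Sigma\times\Sigma$ --- the kernel $\phi_{t}$ for \eqref{Ft}, $\psi_{t}$ for \eqref{Gt}, and $\psi_{t}h$ for \eqref{Ht}. To invoke Lemma~\ref{lemmaGC} I need each class to be totally bounded in $L^{\infty}(\Sigma)$, and this is exactly what the parameter-Lipschitz estimates of Lemma~\ref{coveringpsi}, Lemma~\ref{coveringphi} and Corollary~\ref{linfinityclass} provide: combined with compactness of $\Sigma$ and the Arzel\`{a}--Ascoli theorem they make each class precompact, hence totally bounded, in $L^{\infty}(\Sigma)$. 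Next I would record the uniform sup-norm bounds already in hand: by \eqref{boundFt} and \eqref{boundGt}, for each class the constant $M=\|\cdot\|_{L^{\infty}(\Sigma\times\Sigma)}$ of its kernel obeys $M\le t^{1/2}C$, with $C=\frac{2}{e}\|f\|_{\mathrm{Lip}}\|h\|_{\mathrm{Lip}}$ for \eqref{Ft}, $C=1$ for \eqref{Gt}, and $C=\|h\|_{L^{\infty}(\Sigma)}$ for \eqref{Ht} (the extra mention of $\Sigma$ in the statement being bookkeeping for the fact that the covering numbers living inside $Q_{t}$ are those of $\Sigma$-dependent classes, ultimately estimated via Corollary~\ref{coveringFG}).

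The mechanical core is then to apply Lemma~\ref{lemmaGC} to the class at hand, but with its ``$M$'' taken to be the upper bound $t^{1/2}C$ rather than the exact sup-norm; this is legitimate because the right-hand side of \eqref{GC1}--\eqref{GC2} is monotone in $M$: enlarging $M$ only enlarges the exponential factors $\exp(-\varepsilon^{2}t^{2d+1}\lambda_{0}^{4}n/(8M^{2}))$ and $\exp(-\varepsilon^{2}\lambda_{0}^{2}t^{d+1}n/(8M^{2}))$, only shrinks the covering radius $\varepsilon t^{d+1}\lambda_{0}^{2}/(4M)$ in the $\mathcal{N}(\mathcal{G}^{t},\cdot)$ factor and hence only increases that factor, and leaves the radius $\varepsilon\lambda_{0}t^{(d+1)/2}/4$ in the $\mathcal{N}(\mathcal{F},\cdot)$ factor untouched. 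Substituting $M=t^{1/2}C$ into the resulting bound matches the definition of $Q_{t}(\mathcal{F},\varepsilon,C,n)$ exactly, which is the claimed inequality.

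It remains to dispatch a few minor points. The smallness hypothesis \eqref{cond351}, which reads $\varepsilon t^{d+1/2}\lambda_{0}<M$ and, since $M$ is of order $t^{1/2}$ for each class (with $M_{\mathcal{G}^{t}}=t^{1/2}$ precisely), amounts to asking that $\varepsilon t^{d}\lambda_{0}$ be below a fixed positive constant, holds for all $t$ less than a threshold depending only on $\varepsilon$, $d$ and the fixed functions --- i.e.\ on the range $0<t<t_{0}$ that is relevant throughout. For \eqref{Gt} the quantity inside the probability is in fact identically zero, so nothing is needed there, but the displayed bound holds a fortiori. The one step that takes any care --- bookkeeping rather than an obstacle --- is keeping the half-powers of $t$ straight: the explicit $t^{-1/2}$ in the controlled quantity, the $t^{1/2}$ already built into the kernels in \eqref{Ft}--\eqref{Gt}, and the further $t^{1/2}$ absorbed into $M$ when passing from Lemma~\ref{lemmaGC} to \eqref{Qdef}, so that the exponents and covering radii come out exactly as written in \eqref{Qdef}. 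No probabilistic ingredient beyond Lemma~\ref{lemmaGC} --- hence ultimately Hoeffding's inequality together with the covering argument of Lemma~\ref{equicontinuity} --- is required.
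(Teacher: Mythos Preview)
Your proposal is correct and follows exactly the approach the paper intends: the corollary is stated without a separate proof in the paper, being an immediate specialization of Lemma~\ref{lemmaGC} once one substitutes $M=t^{1/2}C$ (as signaled by the Remark immediately preceding the corollary) and matches the result against the definition~\eqref{Qdef} of $Q_t$. Your write-up is in fact more careful than the paper, since you explicitly verify total boundedness via the Lipschitz estimates, justify replacing $M$ by the upper bound $t^{1/2}C$ via monotonicity of the right-hand side, and address the smallness hypothesis~\eqref{cond351}.
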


As a corollary we obtain the rate of convergence in probability of the sample
Carr\'{e} du Champ to its expected value.

{ }

\begin{corollary}
\label{CDCn}

\begin{enumerate}
We have the following bounds

\item[(a)] Fixing $f$ and $h$ and letting
\[
K=K\left(  f,h\right)  =\min\left\{  \left(  \frac{2}{e}\right)  \Vert
f\Vert_{\mathrm{Lip}}\Vert h\Vert_{\mathrm{Lip}},\Vert f\Vert_{\text{A-t-}%
\mathrm{Lip}}\Vert h\Vert_{\mathrm{Lip}}\right\}  ,
\]
we have
\[
\Pr\left\{  \sup_{x\in\Sigma}\left\vert \hat{\Gamma}_{t}(f,h)(x)-\Gamma
_{t}(f,h)(x)\right\vert \geq\varepsilon\right\}  \leq Q_{t}(\mathcal{F}%
_{f,h}^{t},\varepsilon,K,n).
\]

\item[(b)] For the class $\mathscr{L}_{M}$ we have
\begin{align*}
\Pr\left\{  \sup_{f,h\in\mathscr{L}_{M}}\sup_{x\in\Sigma}\left\vert
\hat{\Gamma}_{t}(f,h)(x)-\Gamma_{t}(f,h)(x)\right\vert \geq\varepsilon
\right\}  \leq Q_{t}(\mathcal{F}^{t},\varepsilon,K,n),
\end{align*}
where $K=\frac{2M^{2}}{e}$.

\item[(c)] In addition, for $t>0$ sufficiently small we have
\begin{align*}
\Pr\left\{  \sup_{f,h\in\mathscr{L}_{M}}\sup_{x\in\Sigma}\left\vert
\hat{\Gamma}_{t}(L_{t}f,h)(x)-\Gamma_{t}(L_{t}f,h)(x)\right\vert
\geq\varepsilon\right\}  \leq Q_{t}(\mathcal{F}_{*}^{t},\varepsilon,K,n),
\end{align*}
where $K=C^{*}_{1}(\Sigma)M^{2}$, and $C^{*}_{1}(\Sigma)$ is as in Convention
\ref{Cconvention}.

\end{enumerate}
\end{corollary}

\begin{proof}
{Let us only prove (a) since the proofs of (b) and (c) are very similar}.
Recall that
\begin{align}
\hat{\Gamma}_{t}(L_{t},f,h)(x)  &  =\frac{1}{t}\frac{1}{\hat{\theta}_{t}}%
\sum_{j=1}^{n}e^{-\frac{\Vert x-\xi_{j}\Vert^{2}}{2t}}(f(\xi_{j}%
)-f(x))(h(\xi_{j})-h(x))\\
&  =\frac{t^{-1/2}}{\hat{\theta}_{t}(x)}\sum_{j=1}^{n}\phi_{t}(x,\xi
_{j})=t^{-1/2}\frac{\mu_{n}(\phi_{t}(x,\cdot))}{\hat{\theta}_{t}(x)},
\end{align}
where $\phi_{t}$ is given by the definition of the class $\mathcal{F}%
_{f,h}^{t}$ in \eqref{Ft} and therefore we can apply Lemma \ref{lemmaGC} to
the class $\mathcal{F}_{f,h}^{t}$ and use the bound
\[
\sup_{\phi\in\mathcal{F}_{f,h}^{t}}\Vert\phi\Vert_{L^{\infty}(\Sigma)}\leq
t^{1/2}K.
\]
which follows from \eqref{boundFt}.
\end{proof}

If we now consider the $t$-Laplacian $L_{t}$ and its sample version $\hat
{L}_{t}$, we see that the deviation of $\hat{L}_{t}$ from $L_{t}$ on a
function $h\in L^{\infty}(\Sigma)$ with $\Vert h\Vert_{L^{\infty}(\Sigma)}\leq
M$ simplifies to
\begin{align}
\hat{L}_{t}h(x)-L_{t}h(x)  &  =\frac{2}{t\hat{\theta}_{t}(x)n}\sum_{j=1}%
^{n}e^{-\frac{\Vert\xi_{j}-x\Vert^{2}}{2t}}(h(\xi_{j}%
)-h(x))\label{cancellation1}\\
&  -\frac{2}{t\theta_{t}(x)}\int_{\Sigma}e^{-\frac{\Vert\xi-x\Vert^{2}}{2t}%
}(h(\xi)-h(x))d\mu(\xi)\\
&  =\frac{2}{\hat{\theta}_{t}(x)tn}\sum_{j=1}^{n}e^{-\frac{\Vert\xi_{j}%
-x\Vert^{2}}{2t}}h(\xi_{j})-\frac{2}{t\theta_{t}(x)}\int_{\Sigma}%
e^{-\frac{\Vert\xi-x\Vert^{2}}{2t}}h(\xi)d\mu(\xi)\\
&  =\frac{2\mu_{n}\psi_{t}(x,\cdot)h(\cdot)}{t^{3/2}\hat{\theta}_{t}(x)}%
-\frac{2\mu\psi_{t}(x,\cdot)h(\cdot)}{t^{3/2}\theta_{t}(x)}.
\label{cancellationlast}%
\end{align}
Observe that
\begin{equation}
\Pr\left\{  \sup_{x\in\Sigma}\left\vert \hat{L}_{t}h(x)-L_{t}h(x)\right\vert
\geq\varepsilon\right\}  \leq\Pr\left\{  t^{-1/2}\sup_{\eta\in\mathcal{H}%
_{h}^{t}}\left\vert \frac{\mu_{n}\eta}{\hat{\theta}_{t}}-\frac{\mu\eta}%
{\theta_{t}}\right\vert \geq t\varepsilon\right\}  . \label{unifdevL}%
\end{equation}
We have obtained

{ }

\begin{corollary}
We have the following bounds \label{BN}

\begin{enumerate}
\item[(a)] Fix a function $h\in L^{\infty}(\Sigma)$. If we set $C=\left\Vert
h\right\Vert _{L^{\infty}}$ we have
\[
\Pr\left\{  \sup_{x\in\Sigma}\left\vert \hat{L}_{t}h(x)-L_{t}h(x)\right\vert
\geq\varepsilon\right\}  \leq Q_{t}(\mathcal{H}_{h}^{t},\varepsilon t,C,n).
\]

\item[(b)] For the class $\mathscr{L}_{M}$ we have the estimate
\begin{align*}
\Pr\left\{  \sup_{h\in\mathscr{L}_{M}}\sup_{x\in\Sigma}\left\vert \hat{L}%
_{t}h(x)-L_{t}h(x)\right\vert \geq\varepsilon\right\}  \leq Q_{t}%
(\mathcal{H}^{t},\varepsilon t,C,n),
\end{align*}
where $C=MC^{*}(\Sigma)$.

\item[(c)] In addition, we have for every $t>0$ the estimate
\begin{align*}
\Pr\left\{  \sup_{f,h\in\mathscr{L}_{M}}\sup_{x\in\Sigma}\left\vert \hat
{L}_{t}\left(  \Gamma_{t}(f,h)\right)  (x)-L_{t}\left(  \Gamma_{t}%
(f,h)\right)  (x)\right\vert \geq\varepsilon\right\}  \leq Q_{t}%
(\mathcal{H}^{t}_{*},\varepsilon t,C,n),
\end{align*}
where $C=\beta_{d}M^{2}$, where $\beta_{d}>0$ is a dimensional constant.
\end{enumerate}
\end{corollary}

\begin{proof}
The proof of (a) follows from combining Lemma \ref{lemmaGC} with
\eqref{unifdevL} and the fact that
\begin{align*}
\sup_{\eta\in\mathcal{H}_{h}^{t}}\Vert\eta\Vert_{L^{\infty}(\Sigma)}\leq
t^{1/2}\Vert h\Vert_{L^{\infty}(\Sigma)}=t^{1/2}C.
\end{align*}
The proofs of (b) and (c) are similar to the proof of (a) but make use of
Lemmas \eqref{classesFtHt} and \eqref{newmaxlemma}.

\end{proof}

\subsection{Subexponential Decay and Almost Sure Convergence.}

\label{secsubexp}

The goal of this subsection is to demonstrate that the decay rate for the
quantities $Q_{t}(\mathcal{F},\varepsilon,M,n)$ implies the almost sure
convergence. \ \ We illustrate the Borel-Cantelli type proof in this section
for the purpose of introducing some notation. This notation will be used in
later sections.

{ }

\begin{theorem}
\label{extrinsic variance for Gamma} Consider the metric measure space
$(\Sigma,\Vert\cdot\Vert,d\mathrm{vol}_{\Sigma})$ where $\Sigma^{d}%
\subset\mathbb{R}^{N}$ is a smooth closed embedded submanifold. Suppose that
we have a uniformly distributed i.i.d. sample $\{\xi_{1},\ldots,\xi_{n}\}$ of
points from $\Sigma$. For $\sigma>0,$ let
\begin{equation}
t_{n}=n^{-\frac{1}{2d+\sigma}}. \label{choicetn}%
\end{equation}
Then

\begin{enumerate}
\item[(a)] For fixed $f,h\in\mathrm{Lip}(\Sigma)$ we have
\begin{align*}
\sup_{\xi\in\Sigma}\left\vert \hat{\Gamma}(L_{t_{n}},f,h)(\xi)-\Gamma
(L_{t_{n}},f,h)(\xi)\right\vert \overset{\text{a.s.}}{\longrightarrow}0
\end{align*}
as $n\rightarrow\infty.$

\item[(b)] For the class $\mathscr{L}_{M}$ we have
\begin{align*}
\sup_{f,h\in\mathscr{L}_{M}}\sup_{\xi\in\Sigma}\left\vert \hat{\Gamma
}(L_{t_{n}},f,h)(\xi)-\Gamma(L_{t_{n}},f,h)(\xi)\right\vert
\overset{\text{a.s.}}{\longrightarrow}0.
\end{align*}

\item[(c)] In addition we have
\begin{align*}
\sup_{f,h\in\mathscr{L}_{M}}\sup_{\xi\in\Sigma}\left\vert \hat{\Gamma
}(L_{t_{n}},L_{t_{n}}f,h)(\xi)-\Gamma(L_{t_{n}},L_{t_{n}}f,h)(\xi)\right\vert
\overset{\text{a.s.}}{\longrightarrow}0.
\end{align*}

\end{enumerate}
\end{theorem}

\begin{proof}
For part (a), if we fix $n$ and $\varepsilon,$ we have
\begin{align*}
\Pr\left\{  \sup_{x\in\Sigma}\left\vert \hat{\Gamma}(L_{t_{n}},f,h)(x)-\Gamma
(L_{t_{n}},f,h)(x)\right\vert \geq\varepsilon\right\}  \leq Q_{t_{n}%
}(\mathcal{F}_{f,h}^{t_{n}},\varepsilon,K,n).
\end{align*}
where $K=K(f,h)$ as in Corollary \ref{CDCn}. Now plugging in the expression
for $t_{n}$ we observe a bound of the form
\begin{equation}
Q_{t_{n}}(\mathcal{F}_{f,h}^{t_{n}},\varepsilon,K)\leq p\left(  n^{\frac
{1}{2\left(  2d+\sigma\right)  }},\frac{1}{\varepsilon}\right)  \exp\left(
-c_{1}\varepsilon^{2}n^{\sigma/(2d+\sigma)}\right)  \label{subexpbound}%
\end{equation}
where $p$ is a fixed polynomial bound and $c_{1}>0$ is a constant. Thus with
$\varepsilon$ fixed, we have
\[
\sum_{n=1}^{\infty}Q_{t_{n}}(\mathcal{F}_{f,h}^{t_{n}},\varepsilon
,K,n)<\infty.
\]
Applying the Borel-Cantelli Lemma gives the almost sure convergence. The proof
of part (b) is analogous, since we have the estimate
\begin{align*}
\Pr\left\{  \sup_{f,h\in\mathscr{L}_{M}}\sup_{x\in\Sigma}\left\vert
\hat{\Gamma}(L_{t_{n}},f,h)(x)-\Gamma(L_{t_{n}},f,h)(x)\right\vert
\geq\varepsilon\right\}  \leq Q_{t_{n}}(\mathcal{F}^{t_{n}},\varepsilon,K,n),
\end{align*}
with $K=\frac{2}{e}M^{2}$. Observe that $Q_{t_{n}}(\mathcal{F}^{t_{n}%
},\varepsilon,K,n)$ satisfies a bound of the form \eqref{subexpbound}, but
this time the coefficients of the polynomial $p$ depend on $\Sigma$, $M$ and
$N$ as seen in Lemma \ref{classesFtHt}. Part (c) follows from the estimate
\begin{align*}
\Pr\left\{  \sup_{f,h\in\mathscr{L}_{M}}\sup_{x\in\Sigma}\left\vert
\hat{\Gamma}(L_{t},L_{t}f,h)(x)-\Gamma(L_{t},L_{t}f,h)(x)\right\vert
\geq\varepsilon\right\}  \leq Q_{t}(\mathcal{F}^{t}_{*},\varepsilon,K,n),
\end{align*}
where $K=M^{2}C^{*}_{1}(\Sigma)$.
\end{proof}

Now we see that for almost sure convergence, one requires a bound of the form
(\ref{subexpbound}). For this reason, we introduce notation for use in the
sequel: consider a function
\begin{equation}
\mathcal{Q}:\mathbb{R}^{+}\times\mathbb{R}^{+}\times\mathbb{N}\rightarrow
\mathbb{R}^{+}. \label{qbcbeta}%
\end{equation}
We say that $\mathcal{Q}(t,\varepsilon,n)\in O_{BC}(\beta)$ if
\begin{equation}
\sum_{n=1}^{\infty}\mathcal{Q}(n^{-\frac{1}{\beta+\sigma}},\varepsilon
,n)<\infty\label{obcbeta}%
\end{equation}
for all $\sigma,\varepsilon>0.$ Clearly the definition gives
\[
O_{BC}(\beta)\subset O_{BC}(\beta^{\prime})
\]
for $\beta^{\prime}>\beta.$ We also observe that
\[
O_{BC}(\beta)+O_{BC}(\beta^{\prime})\in O_{BC}\left(  \max\left\{  \beta
,\beta^{\prime}\right\}  \right)  .
\]

\begin{lemma}
\label{probdecayFGH} For the classes of functions defined by \eqref{Ft},
\eqref{Gt}, \eqref{Ht}, \eqref{defFt}, \eqref{defHt},\eqref{defF*t},
\eqref{defH*t} and fixed $\varepsilon,M>0$ we have
\[
Q_{t}(\mathcal{F},\varepsilon,M,n)\in O_{BC}(2d).
\]

\end{lemma}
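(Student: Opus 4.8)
The plan is to substitute the scale $t=t_{n}=n^{-1/(2d+\sigma)}$ into the definition \eqref{Qdef} of $Q_{t}(\mathcal{F},\varepsilon,M,n)$ and verify that the resulting series converges for every $\sigma,\varepsilon>0$; this is exactly the condition $Q_{t}(\mathcal{F},\varepsilon,M,n)\in O_{BC}(2d)$. The two summands in \eqref{Qdef} have the same structure — a covering number evaluated at a radius that is a fixed power of $t$ times a constant, multiplied by an exponential of the form $\exp(-c\,t^{a}n)$ — so it suffices to track the exponents and the covering-number growth.

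First I would simplify the two exponential factors. Since $t^{1/2}$ enters only through $(t^{1/2}M)^{2}=tM^{2}$ in the denominators, the exponents collapse to $-\frac{\varepsilon^{2}\lambda_{0}^{4}}{8M^{2}}\,t^{2d}n$ in the first summand and $-\frac{\varepsilon^{2}\lambda_{0}^{2}}{8M^{2}}\,t^{d}n$ in the second. With $t=t_{n}$ one has $t_{n}^{2d}n=n^{\sigma/(2d+\sigma)}$ and $t_{n}^{d}n=n^{(d+\sigma)/(2d+\sigma)}$, so both exponents tend to $-\infty$, the first (carrying the $t^{2d}$, which originates from the $1/\hat\theta_{t}-1/\theta_{t}$ term in Lemma \ref{lemmaGC}) being the slower-decaying of the two. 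Thus the exponential part of $Q_{t_{n}}$ is bounded above by $\exp\!\big(-c_{1}\varepsilon^{2}n^{\sigma/(2d+\sigma)}\big)$ for a constant $c_{1}>0$ depending only on $M$ and $\lambda_{0}$.

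Next I would control the covering-number prefactors. In \eqref{Qdef} these are evaluated at radii of size $\tfrac{\varepsilon\lambda_{0}^{2}}{4M}\,t^{d+1/2}$ and $\tfrac{\varepsilon\lambda_{0}}{4}\,t^{(d+1)/2}$, so their reciprocals are fixed constants times fixed powers of $1/t$. By Corollary \ref{coveringFG}, for each of the classes \eqref{Ft}, \eqref{Gt}, \eqref{Ht} the covering number $\mathcal{N}(\cdot,\delta)$ is at most $C_{d}V\big(\tau^{-d}+(C/\delta)^{d}\big)$, i.e.\ polynomial in $1/\delta$, hence polynomial in $1/t_{n}=n^{1/(2d+\sigma)}$, hence polynomial in $n$. (The hypothesis \eqref{cond351} of Lemma \ref{lemmaGC}, namely $\varepsilon t^{d+1/2}\lambda_{0}<M$, holds for all large $n$ because $t_{n}\to0$, so the lemma applies for the tail of the series, which is all that matters for summability.) Combining, for all large $n$ we obtain a bound of the schematic form $Q_{t_{n}}(\mathcal{F},\varepsilon,M,n)\le p(n)\exp\!\big(-c_{1}\varepsilon^{2}n^{\sigma/(2d+\sigma)}\big)$ with $p$ a fixed polynomial (absorbing the prefactors of both terms and the slower second exponential).

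Finally, since $\alpha:=\sigma/(2d+\sigma)>0$, the series $\sum_{n}p(n)\exp(-c_{1}\varepsilon^{2}n^{\alpha})$ converges: eventually $p(n)\le\exp(\tfrac{c_{1}}{2}\varepsilon^{2}n^{\alpha})$, so the tail is dominated by $\sum_{n}\exp(-\tfrac{c_{1}}{2}\varepsilon^{2}n^{\alpha})<\infty$. As $\sigma$ and $\varepsilon$ were arbitrary positive numbers, this gives $Q_{t}(\mathcal{F},\varepsilon,M,n)\in O_{BC}(2d)$. I do not expect any genuine obstacle — the content is bookkeeping — but the one point deserving care is confirming that $2d$ is the sharp threshold: it is precisely the $t^{2d}n$ factor (equivalently, the loss of a power $t^{d}$ incurred when passing from $\theta_{t}$ to its normalized version via Lemma \ref{manipulation1}) that forces $t_{n}$ to decay no faster than $n^{-1/(2d+\sigma)}$, and one must check that the polynomial covering-number growth is strictly slower than $\exp(c n^{\sigma/(2d+\sigma)})$ for \emph{every} $\sigma>0$, which is what makes $\beta=2d$ work.
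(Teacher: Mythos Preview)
Your proposal is correct and follows essentially the same route as the paper: substitute $t=n^{-1/(2d+\sigma)}$, identify the dominant exponential as $\exp\big(-c\,\varepsilon^{2}\,t^{2d}n\big)=\exp\big(-c\,\varepsilon^{2}\,n^{\sigma/(2d+\sigma)}\big)$, and observe that the polynomial covering-number prefactors from Corollary~\ref{coveringFG} are negligible against this subexponential decay. The paper's own proof is terser---it records only the dominant exponential and omits the explicit handling of the covering numbers and of condition~\eqref{cond351}---so your write-up in fact fills in details the paper leaves implicit.
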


\begin{proof}
Plugging in
\[
t=n^{-\frac{1}{\beta+\sigma}}%
\]
the dominant exponential factor in (\ref{Qdef}) becomes
\[
\exp\left(  -\frac{\varepsilon^{2}\lambda_{0}^{4}}{8C}n^{\frac{\beta
+\sigma-2d}{\beta+\sigma}}\right)  .
\]
Clearly, for
\[
\beta+\sigma-2d>0
\]
we have%
\[
\sum_{n=1}^{\infty}\mathcal{Q}(n^{-\frac{1}{\beta+\sigma}},\varepsilon
,n)<\infty.
\]

\end{proof}

We record the following, which is clear from the definitions and Corollary
\ref{coveringFG}.

\begin{corollary}
\label{BC increase}Given any of the classes above, let
\[
\mathcal{Q}(t,\varepsilon,n)=Q_{t}(\mathcal{F},\varepsilon t^{\alpha
},Kt^{\delta},n).
\]
Then
\[
\mathcal{Q}(t,\varepsilon,n)\in O_{BC}(2d+2\alpha-2\delta).
\]

\end{corollary}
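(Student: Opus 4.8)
The plan is to substitute $\varepsilon\mapsto\varepsilon t^{\alpha}$ and $M\mapsto Kt^{\delta}$ directly into the definition \eqref{Qdef} of $Q_{t}$, read off which of the two exponential factors governs the tail, and check that the covering-number prefactors remain polynomial in $1/t$ and $1/\varepsilon$. This is the same Borel--Cantelli-type computation that proves Lemma \ref{probdecayFGH}; only the bookkeeping of powers of $t$ changes.

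First I would compute the two exponents. Since $\bigl(t^{1/2}\cdot Kt^{\delta}\bigr)^{2}=K^{2}t^{1+2\delta}$, after the substitution the first exponential factor in \eqref{Qdef} becomes
\[
\exp\!\left(-\frac{\varepsilon^{2}\lambda_{0}^{4}}{8K^{2}}\,t^{2d+2\alpha-2\delta}\,n\right),
\]
and the second becomes
\[
\exp\!\left(-\frac{\varepsilon^{2}\lambda_{0}^{2}}{8K^{2}}\,t^{d+2\alpha-2\delta}\,n\right).
\]
For the scales of interest we have $t<1$, so $t^{2d+2\alpha-2\delta}\le t^{d+2\alpha-2\delta}$, and hence the first factor decays the more slowly of the two; write $\beta=2d+2\alpha-2\delta$.

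Next I would bound the covering numbers. After the substitution the arguments of the covering numbers in \eqref{Qdef} take the form of a fixed constant (depending on $\lambda_{0},K$) times $\varepsilon\,t^{p}$ with $p=d+\tfrac12+\alpha-\delta$ for the $\mathcal{G}^{t}$ term and $p=\tfrac{d+1}{2}+\alpha$ for the $\mathcal{F}$ term. By Corollary \ref{coveringFG} each such covering number is at most $C_{d}V\bigl(\tau^{-d}+(c\,\varepsilon^{-1}t^{-p})^{d}\bigr)$, hence at most a fixed polynomial $P(1/t,1/\varepsilon)$ whose coefficients depend only on $\Sigma$, $d$, $\alpha$, $\delta$, $K$ (and, for $\mathcal{F}^{t}_{f,h}$, on the Lipschitz data of $f,h$).

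Finally, substituting $t=n^{-1/(\beta+\sigma)}$ turns the prefactor into $P\bigl(n^{1/(\beta+\sigma)},1/\varepsilon\bigr)$, a polynomial in $n$, while the dominant exponential becomes
\[
\exp\!\left(-\frac{\varepsilon^{2}\lambda_{0}^{4}}{8K^{2}}\,n^{\frac{(\beta+\sigma)-\beta}{\beta+\sigma}}\right)=\exp\!\left(-\frac{\varepsilon^{2}\lambda_{0}^{4}}{8K^{2}}\,n^{\sigma/(\beta+\sigma)}\right).
\]
Since $\sigma/(\beta+\sigma)>0$, this subexponential factor beats any polynomial, so $\sum_{n\ge1}\mathcal{Q}\bigl(n^{-1/(\beta+\sigma)},\varepsilon,n\bigr)<\infty$ for every $\varepsilon,\sigma>0$, which is exactly the assertion $\mathcal{Q}(t,\varepsilon,n)\in O_{BC}(2d+2\alpha-2\delta)$. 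I do not expect a genuine obstacle here: the entire content is tracking the power of $t$ contributed by the denominator $(t^{1/2}M)^{2}$, and the only point that needs a word of care is confirming that the first of the two exponential terms — rather than the second — is the one that controls the sum.
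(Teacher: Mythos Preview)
Your argument is correct and is exactly the computation the paper has in mind: the paper simply states that the corollary ``is clear from the definitions and Corollary~\ref{coveringFG}'' without writing out the details, and what you have written is the natural expansion of that remark --- substitute $\varepsilon t^{\alpha}$ and $Kt^{\delta}$ into \eqref{Qdef}, identify $t^{2d+2\alpha-2\delta}$ as the slower-decaying exponent, and use Corollary~\ref{coveringFG} to see that the covering-number prefactors are polynomial in $1/t$ and $1/\varepsilon$.
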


{ }

\subsection{Proof of Theorem \ref{extrinsic variance}}

\label{secproofLD} We will only carry out the proof of part (b). The proof of
part (a) is analogous. Recall that
\[
\Gamma_{2}(L_{t},f,f)=\frac{1}{2}\left(  L_{t}\left(  \Gamma_{t}(f,f)\right)
-2\Gamma_{t}(L_{t}f,f)\right)  ,
\]
and that from Remark \ref{schemCDC2}
\[
\hat{\Gamma}_{2}(L_{t},f,f)=\frac{1}{2}\left(  \hat{L}_{t}\left(  \hat{\Gamma
}_{t}(f,f)\right)  -2\hat{\Gamma}_{t}(\hat{L}_{t}f,f)\right)  ,
\]
so we will start by estimating the difference $\hat{L}_{t}\left(  \hat{\Gamma
}_{t}(f,f)\right)  -L_{t}\left(  \Gamma_{t}(f,f)\right)  $ which we write as
\begin{align}
\left[  \hat{L}_{t}(\hat{\Gamma}_{t}(f,f))-L(\Gamma_{t}(f,f))\right]  (x)=  &
\hat{L}_{t}\left(  \hat{\Gamma}_{t}(f,f)-\Gamma_{t}(f,f)\right)
(x)\label{split1}\\
&  +\left(  \hat{L}_{t}-L_{t}\right)  \Gamma_{t}(f,f)(x)\\
&  =A_{1}(x)+A_{2}(x),
\end{align}
and observe that
\begin{equation}
\left\Vert A_{1}\right\Vert _{\infty}=\sup_{x\in\Sigma}\left\vert \hat{L}%
_{t}\left(  \hat{\Gamma}_{t}(f,f)(x)-\Gamma_{t}(f,f)(x)\right)  \right\vert
\leq\frac{4}{t}\sup_{f\in\mathscr{L}_{M}}\sup_{\xi\in\Sigma}\left\vert
\hat{\Gamma}_{t}(f,f)(\xi)-\Gamma_{t}(f,f)(\xi)\right\vert . \label{A1}%
\end{equation}

In order to estimate $A_{2}=\left(  \hat{L}_{t}-L_{t}\right)  (\Gamma
_{t}(f,f))$ we make use of {Corollary \ref{BN}.} We now estimate the
difference
\begin{align}
\hat{\Gamma}_{t}(\hat{L}_{t}f,f)(x)-\Gamma_{t}(L_{t}f,f)(x)  &  =\hat{\Gamma
}_{t}(\hat{L}_{t}f-L_{t}f,f)(x)+\left(  \hat{\Gamma}_{t}-\Gamma_{t}\right)
(L_{t}f,f)(x)\label{split2}\\
&  =A_{3}(x)+A_{4}(x),
\end{align}
and we note
\begin{align}
\left\Vert A_{3}\right\Vert _{\infty}=  &  \sup_{x\in\Sigma}\left\vert
\hat{\Gamma}_{t}(\hat{L}_{t}f-L_{t}f,f)(x)\right\vert \\
&  \leq\sup_{x\in\Sigma}\frac{2}{t}\left(  \sup_{\xi\in\Sigma}\left\vert
\hat{L}_{t}(f)(\xi)-L_{t}(f)(\xi)\right\vert \right)  \frac{1}{n\hat{\theta
}_{t}(x)}\sum_{j=1}^{n}e^{-\frac{\Vert\xi_{j}-x\Vert^{2}}{2t}}|f(\xi
_{j})-f(x)|\\
&  \leq2\frac{e^{-1/2}\Vert f\Vert_{\mathrm{Lip}}}{t^{1/2}}\sup_{x\in\Sigma
}\frac{1}{\hat{\theta}_{t}(x)}\left(  \sup_{\xi\in\Sigma}\left\vert (\hat
{L}_{t}(f)(\xi)-L_{t}(f)(\xi))\right\vert \right) \nonumber\\
&  \leq2\frac{e^{-1/2}M}{t^{1/2}}\sup_{x\in\Sigma}\frac{1}{\hat{\theta}%
_{t}(x)}\left(  \sup_{f\in\mathscr{L}_{M}}\sup_{\xi\in\Sigma}\left\vert
(\hat{L}_{t}(f)(\xi)-L_{t}(f)(\xi))\right\vert \right)  \label{term3}%
\end{align}
where we have used the fact that for functions $u,v$
\begin{align}
\left\vert \hat{\Gamma}_{t}(u,v)(x)\right\vert  &  \leq\frac{2}{t}\sup_{\xi
\in\Sigma}|u(\xi)|\left(  \frac{1}{n\hat{\theta}_{t}(x)}\sum_{j=1}%
^{n}e^{-\frac{\Vert x-\xi_{j}\Vert^{2}}{2t}}|v(x)-v(\xi_{j})|\right) \\
&  \leq\frac{2}{t}\sup_{\xi\in\Sigma}|u(\xi)|\left(  \frac{\Vert
v\Vert_{\mathrm{Lip}}}{n\hat{\theta}_{t}(x)}\sup_{\rho>0}\rho e^{-\frac
{\rho^{2}}{2t}}\right)  ,
\end{align}
and
\[
\sup_{\rho>0}\rho e^{-\frac{\rho^{2}}{2t}}=t^{1/2}e^{-1/2}\leq t^{1/2}.
\]
In view of Lemma \eqref{lemmaGC}, we will write the bound \eqref{term3} on
$\left\Vert A_{3}\right\Vert _{\infty}$ as
\begin{align}
\left\Vert A_{3}\right\Vert _{\infty}  &  \leq2\frac{e^{-1/2}M}{t^{1/2}%
\theta_{t}}\sup_{x\in\Sigma}\frac{1}{\theta_{t}(x)}\left(  \sup_{f\in
\mathscr{L}_{M}}\sup_{\xi\in\Sigma}\left\vert (\hat{L}_{t}(f)(\xi
)-L_{t}(f)(\xi))\right\vert \right) \\
&  +\frac{2e^{-1/2}M}{t^{1/2}}\sup_{x\in\Sigma}\left(  \frac{1}{\hat{\theta
}_{t}(x)}-\frac{1}{\theta_{t}(x)}\right)  \left(  \sup_{f\in\mathscr{L}_{M}%
}\sup_{\xi\in\Sigma}\left\vert (\hat{L}_{t}(f)(\xi)-L_{t}(f)(\xi))\right\vert
\right)  . \label{term3.1}%
\end{align}
In order to estimate $|A_{4}|$, i.e.
\[
\left\Vert A_{4}\right\Vert _{\infty}= \sup_{x\in\Sigma}\left\vert \left(
\hat{\Gamma}_{t}-\Gamma_{t}\right)  (L_{t}(f,f))(x)\right\vert ,
\]
we will make use of {Corollary \ref{CDCn}}. \newline

Observe now that the terms $A_{1},A_{2},A_{3},A_{4}$ above are random
variables with expressions of the form $A_{i}=A_{i}(f,x)$, let $A_{i}^{\ast
}=\sup_{f\in\mathscr{L}_{M}}\sup_{x\in\Sigma}|A_{i}(f,x)|$ for $i=1,2,3,4$. We
now have from \eqref{split1} and \eqref{split2}
\begin{align}
&  \Pr\left\{  \sup_{f\in\mathscr{L}_{M}}\sup_{x\in\Sigma}\left\vert
\hat{\Gamma}_{2}(L_{t},f,f)-\Gamma_{2}(L_{t},f,f)\right\vert (x)\geq
\varepsilon\right\} \\
&  \leq\Pr\left\{  A^{*}_{1}\geq\frac{\varepsilon}{4}\right\}  +\Pr\left\{
A^{*}_{2}\geq\frac{\varepsilon}{4}\right\}  +\Pr\left\{  A^{*}_{3}\geq
\frac{\varepsilon}{4}\right\}  +\Pr\left\{  A^{*}_{4}\geq\frac{\varepsilon}%
{4}\right\} \\
&  =P_{1}+P_{2}+P_{3}+P_{4}.
\end{align}
From \eqref{A1} and Corollary \ref{CDCn} we have
\begin{align}
P_{1}  &  \leq\Pr\left\{  \sup_{f\in\mathscr{L}_{M}} \sup_{\xi\in\Sigma
}\left\vert \hat{\Gamma} _{t}(f,f)(\xi)-\Gamma_{t}(f,f)(\xi)\right\vert
\geq\frac{t\varepsilon} {16}\right\}  \leq Q_{t}\left(  \mathcal{F}_{f}%
^{t},\frac{t\varepsilon} {16},2e^{-1}M^{2},n\right)  ,\\
&  \in O_{BC}\left(  2d+2\right)  . \label{boundP1}%
\end{align}

The statement about the convergence order follows from\ Corollary
\ref{BC increase}.

For $A_{2}=\left(  \hat{L}_{t}-L_{t}\right)  (\Gamma_{t}(f,f))$ we apply part
(c) of Corollary \ref{BN} together with \eqref{maxH*t} and obtain
\begin{equation}
P_{2}\leq Q_{t}\left(  \mathcal{H}^{t}_{*},\varepsilon t,\beta_{d}%
M^{2},n\right)  \in O_{BC}\left(  2d+2\right)  . \label{boundP2}%
\end{equation}

Using \eqref{term3.1} we have
\begin{align}
P_{3}  &  \leq\Pr\left\{  \frac{\varepsilon}{8}\leq2\frac{e^{-1/2}M}%
{t^{1/2}\theta_{t}}\sup_{x\in\Sigma}\frac{1}{\theta_{t}(x)}\left(  \sup
_{f\in\mathscr{L}_{M}}\sup_{\xi\in\Sigma}\left\vert (\hat{L}_{t}(f)(\xi
)-L_{t}(f)(\xi))\right\vert \right)  \right\} \\
&  +\Pr\left\{  \frac{\varepsilon}{8}\leq\frac{2e^{-1/2}M}{t^{1/2}}\sup
_{x\in\Sigma}\left|  \frac{1}{\hat{\theta}_{t}(x)}-\frac{1}{\theta_{t}%
(x)}\right|  \left(  \sup_{f\in\mathscr{L}_{M}}\sup_{\xi\in\Sigma}\left\vert
(\hat{L}_{t}(f)(\xi)-L_{t}(f)(\xi))\right\vert \right)  \right\}
\end{align}
and choosing $t>0$ small enough we obtain from \eqref{lowlambda0} and
Corollary \ref{BN} the estimate
\begin{align}
&  \Pr\left\{  \frac{\varepsilon}{8}\leq2\frac{e^{-1/2}M}{t^{1/2}\theta_{t}%
}\left(  \sup_{f\in\mathscr{L}_{M}}\sup_{\xi\in\Sigma}\left\vert (\hat{L}%
_{t}(f)(\xi)-L_{t}(f)(\xi))\right\vert \right)  \right\} \\
&  \leq\Pr\left\{  \frac{\varepsilon}{8}\frac{\lambda_{0}t^{\frac{d+2}{2}%
}e^{1/2}}{M}\leq\sup_{f\in\mathscr{L}_{M}}\sup_{\xi\in\Sigma}\left\vert
(\hat{L}_{t}(f)(\xi)-L_{t}(f)(\xi))\right\vert \right\} \\
&  \leq Q_{t}\left(  \mathcal{H}^{t},\frac{\varepsilon}{8}\frac{\lambda
_{0}t^{\frac{d+1}{2}}e^{1/2}}{M}t,C,n\right)  \in O_{BC}(3d+3),
\end{align}
where $C=C^{*}(\Sigma)M\ge\sup_{f\in\mathscr{L}_{M}}\|f\|_{L^{\infty}(\Sigma
)}$. Here the increased order is $t^{\frac{d+1}{2}}$ and $t$ which is
$2d+d+1+2.$ \ On the other hand, we will make use of the estimate
\[
\sup_{f\in\mathscr{L}_{M}}\sup_{\xi\in\Sigma}\left\vert (\hat{L}_{t}%
(f)(\xi)-L_{t}(f)(\xi))\right\vert \leq\frac{4}{t}\sup_{f\in\mathscr{L}_{M}%
}\Vert f\Vert_{L^{\infty}}\le\frac{4C^{*}(\Sigma)M}{t}=\frac{4C}{t},
\]
for any $t>0$ to obtain
\begin{align}
&  \Pr\left\{  \frac{\varepsilon}{8}\leq\frac{2e^{-1/2}M}{t^{1/2}}\sup
_{x\in\Sigma}\left|  \frac{1}{\hat{\theta}_{t}(x)}-\frac{1}{\theta_{t}%
(x)}\right|  \left(  \sup_{f\in\mathscr{L}_{M}}\sup_{\xi\in\Sigma}\left\vert
(\hat{L}_{t}(f)(\xi)-L_{t}(f)(\xi))\right\vert \right)  \right\} \\
&  \leq\Pr\left\{  \frac{\varepsilon t^{3/2}e^{1/2}}{64M}\leq\sup_{x\in\Sigma
}\left\vert \frac{1}{\hat{\theta}_{t}(x)}-\frac{1}{\theta_{t}(x)}\right\vert
\right\}
\end{align}
and from \eqref{probtheta1}-\eqref{probtheta2} and Lemma \ref{hoeffding} we
observe that
\[
\Pr\left\{  \frac{\varepsilon}{8}\leq\frac{2e^{-1/2}M}{t^{1/2}}\sup
_{x\in\Sigma}\left|  \frac{1}{\hat{\theta}_{t}}-\frac{1}{\theta_{t}}\right|
\left(  \sup_{f\in\mathscr{L}_{M}}\sup_{\xi\in\Sigma}\left\vert (\hat{L}%
_{t}(f)(\xi)-L_{t}(f)(\xi))\right\vert \right)  \right\}  \in O_{BC}(d+3).
\]
We then conclude that
\begin{equation}
P_{3}\in O_{BC}(3d+3). \label{boundP3}%
\end{equation}
Finally, in order to estimate $P_{4}$, we use part (c) of Lemma \ref{CDCn},
namely
\begin{align*}
\Pr\left\{  \sup_{f\in\mathscr{L}_{M}}\sup_{x\in\Sigma}\left\vert \hat{\Gamma
}_{t}(L_{t}f,f)(x)-\Gamma_{t}(L_{t}f,f)(x)\right\vert \geq\varepsilon\right\}
\leq Q_{t}(\mathcal{F}^{t}_{*},\varepsilon,K,n),
\end{align*}
where $K=C_{1}^{*}(\Sigma)M^{2}$ (from \eqref{maxF*t}) and hence
\begin{equation}
P_{4}=\Pr\left\{  A^{*}_{4} \geq\frac{\varepsilon}{4}\right\}  \leq
Q_{t}(\mathcal{F}^{t}_{*},\frac{\varepsilon}{4},K,n)\in O_{BC}(2d).
\label{boundP4}%
\end{equation}
Using again
\[
\Pr\left\{  \left\vert \hat{\Gamma}_{2}(L_{t},f,f)-{\Gamma}_{2}(L_{t}%
,f,f)\right\vert \geq\varepsilon\right\}  \leq P_{1}+P_{2}+P_{3}+P_{4},
\]
and from \eqref{boundP1},\eqref{boundP2},\eqref{boundP3} and \eqref{boundP4}
we have
\begin{equation}
\Pr\left\{  \left\vert \hat{\Gamma}_{2}(L_{t},f,f)-{\Gamma}_{2}(L_{t}%
,f,f)\right\vert \geq\varepsilon\right\}  \in O_{BC}(3d+3),
\label{gamma2converge}%
\end{equation}
and it follows from the Borel-Cantelli argument given in Section
\ref{secsubexp} that for any sequence of the form $t_{n}=n^{-\gamma}$ where
$\gamma=\frac{1}{3d+3+\sigma}$ and $\sigma$ is any positive number we have
\[
\sup_{\xi\in\Sigma}\left\vert \hat{\Gamma}_{2}(L_{t_{n}},f,f)-{\Gamma}%
_{2}(L_{t_{n}},f,f)\right\vert \overset{\text{a.s.}}{\longrightarrow}0.
\]
This proves Theorem B.

Now with the convergence for each fixed function $f$ we can prove Corollary
\ref{corunif} (see section \ref{summary}).

\begin{proof}
[Proof of Corollary \ref{corunif}]We work on a compact smooth submanifold of
Euclidean space. \ With the ambient distance function, there is no cut locus,
and the set of functions given by (recall (\ref{defBIGF}))
\[
\mathcal{R=}\left\{  F_{x,y}:\left(  x,y\right)  \in\Sigma\times
\Sigma\right\}  ,
\]
is uniformly bounded in $C^{5}.$ \ \ The map
\begin{align*}
\Sigma\times\Sigma &  \rightarrow C^{5}(\Sigma)\\
(x,y)  &  \rightarrow F_{x,y}%
\end{align*}
is a Lipschitz map. \ It follows that we can take a finite $\delta$-net
$\mathcal{G}$ with respect to the $L^{\infty}$ topology, and the net size will
grow at worst polynomially. \ \ \ That is for a given
\[
f\in\mathcal{R}\text{ }%
\]
there exists
\[
f^{\ast}\in\mathcal{G}%
\]
such that
\[
\left\Vert f-f^{\ast}\right\Vert _{L^{\infty}}<\delta.\text{ \ }%
\]
The constant $\delta$ will be chosen below. \ We want to estimate the
probability
\[
P=\Pr\left\{  \sup_{f\in\mathcal{R}}\sup_{\xi\in\Sigma}\left\vert \hat{\Gamma
}_{2}(L_{t},f,f)\left(  \xi\right)  -{\Gamma}_{2}(\Delta_{g},f,f)\left(
\xi\right)  \right\vert \geq\varepsilon\right\}  .
\]
We have%
\begin{align*}
\hat{\Gamma}_{2}(L_{t},f,f)\left(  \xi\right)  -{\Gamma}_{2}(\Delta
_{g},f,f)\left(  \xi\right)   &  =\hat{\Gamma}_{2}(L_{t},f,f)\left(
\xi\right)  -\hat{\Gamma}_{2}(L_{t},f^{\ast},f^{\ast})\left(  \xi\right) \\
&  +\hat{\Gamma}_{2}(L_{t},f^{\ast},f^{\ast})\left(  \xi\right)  -{\Gamma}%
_{2}(L_{t},f^{\ast},f^{\ast})\left(  \xi\right) \\
&  +{\Gamma}_{2}(L_{t},f^{\ast},f^{\ast})\left(  \xi\right)  -{\Gamma}%
_{2}(L_{t},f,f)\left(  \xi\right) \\
&  +{\Gamma}_{2}(L_{t},f,f)\left(  \xi\right)  -{\Gamma}_{2}(\Delta
_{g},f,f)\left(  \xi\right)  .
\end{align*}
Thus,
\begin{align*}
P  &  \leq\Pr\left\{  \sup_{f\in\mathcal{R}}\sup_{\xi\in\Sigma}\left\vert
\hat{\Gamma}_{2}(L_{t},f,f)\left(  \xi\right)  -\hat{\Gamma}_{2}(L_{t}%
,f^{\ast},f^{\ast})\right\vert \geq\frac{\varepsilon}{4}\right\} \\
&  +\Pr\left\{  \sup_{f\in\mathcal{R}}\sup_{\xi\in\Sigma}\left\vert
\hat{\Gamma}_{2}(L_{t},f^{\ast},f^{\ast})\left(  \xi\right)  -{\Gamma}%
_{2}(L_{t},f^{\ast},f^{\ast})\left(  \xi\right)  \right\vert \geq
\frac{\varepsilon}{4}\right\} \\
&  +\Pr\left\{  \sup_{f\in\mathcal{R}}\sup_{\xi\in\Sigma}\left\vert {\Gamma
}_{2}(L_{t},f^{\ast},f^{\ast})\left(  \xi\right)  -{\Gamma}_{2}(L_{t}%
,f,f)\left(  \xi\right)  \right\vert \geq\frac{\varepsilon}{4}\right\} \\
&  +\Pr\left\{  \sup_{f\in\mathcal{R}}\sup_{\xi\in\Sigma}\left\vert {\Gamma
}_{2}(L_{t},f,f)\left(  \xi\right)  -{\Gamma}_{2}(\Delta_{g},f,f)\right\vert
\geq\frac{\varepsilon}{4}\right\} \\
&  \leq P_{1}+P_{2}+P_{3}+P_{4}.
\end{align*}
First, note that for the bilinear form $\hat{\Gamma}_{2}$ we have
\[
\hat{\Gamma}_{2}(L_{t},f,f)\left(  \xi\right)  -\hat{\Gamma}_{2}(L_{t}%
,f^{\ast},f^{\ast})=\hat{\Gamma}_{2}(L_{t},f-f^{\ast},f)\left(  \xi\right)
-\hat{\Gamma}_{2}(L_{t},f^{\ast}-f,f^{\ast})
\]
thus%
\[
\left\vert \hat{\Gamma}_{2}(L_{t},f,f)\left(  \xi\right)  -\hat{\Gamma}%
_{2}(L_{t},f^{\ast},f^{\ast})\right\vert \leq\frac{4}{t^{2}}\left\Vert
f-f^{\ast}\right\Vert _{L^{\infty}}\sup_{f\in\mathcal{R}}\left\Vert
f\right\Vert _{L^{\infty}}.
\]
So we may choose
\[
\delta=\frac{\varepsilon}{16C_{0}}t^{2+\sigma}%
\]
where%
\[
C_{0}=\sup_{F_{x,y}\in\mathcal{R}}\left\Vert F_{x,y}\right\Vert _{\infty}.
\]
With this choice $P_{1}=0.$ \ \ By the same reasoning, also $P_{3}=0.$\newline
Next, we have, by Theorem \ref{extrinsic bias}
\[
\left\vert \sup_{\xi\in\Sigma}{\Gamma}_{2}(L_{t},f,f)\left(  \xi\right)
-{\Gamma}_{2}(\Delta_{g},f,f)\left(  \xi\right)  \right\vert \leq C_{5}t^{1/2}%
\]
where
\[
C_{5}=\sup_{F_{x,y}\in\mathcal{R}}\left\Vert F_{x,y}\right\Vert _{C^{5}\left(
\Sigma\right)  }.
\]
So as long as
\[
t^{1/2}<\frac{\varepsilon}{4C_{5}}%
\]
we have $P_{4}=0.$ \ \ \ We are left to show%
\[
P_{2}=\Pr\left\{  \sup_{f\in\mathcal{R}}\sup_{\xi\in\Sigma}\left\vert
\hat{\Gamma}_{2}(L_{t},f^{\ast},f^{\ast})\left(  \xi\right)  -{\Gamma}%
_{2}(L_{t},f^{\ast},f^{\ast})\left(  \xi\right)  \right\vert \geq
\frac{\varepsilon}{4}\right\}  \rightarrow0.
\]
But by (\ref{gamma2converge}) we have, for each individual $f^{\ast}%
\in\mathcal{G}$%
\[
\Pr\left\{  \sup_{\xi\in\Sigma}\left\vert \hat{\Gamma}_{2}(L_{t},f^{\ast
},f^{\ast})\left(  \xi\right)  -{\Gamma}_{2}(L_{t},f^{\ast},f^{\ast})\left(
\xi\right)  \right\vert \geq\frac{\varepsilon}{4}\right\}  \in O(3d+3),
\]
and the size of the set satisfies
\[
\left\vert \mathcal{G}\right\vert \leq\mathcal{N}\left(  \mathcal{R}%
,\frac{\varepsilon}{16C_{0}}t^{2+\sigma}\right)  .
\]
This in turn is bounded by a polynomial in $\frac{1}{t}$, so we can apply the
Borel Cantelli argument and obtain the result.
\end{proof}

\section{Local PCA and proof of Theorem \ref{Ricciapprox}}

\label{localPCA}

{The goal of this section is to construct a class of test functions that can
be inserted in our construction for $\hat{\Gamma}_{2}(L_{t_{n}},\cdot,\cdot)$
to recover the Ricci curvature as stated in Theorem \ref{Ricciapprox} (in
other words, we will explain how to obtain the functions $f_{n}$ in Theorem
\ref{Ricciapprox}). The key for the construction of these test functions is a
method for estimating a basis of the tangent space to $\Sigma^{d}$ at a given
point $x\in\Sigma$ known as \emph{local PCA} where PCA stands for ``Principal
Component Analysis". The construction that we are about to describe was
developed in \cite{SW12} Section 2.1 and Appendix B, however, for the reader's
convenience we will review the construction without proving any of the
theorems shown in \cite{SW12}. After reviewing the local PCA construction in
\cite{SW12} we will explain how these ideas can be combined with Theorem
\ref{extrinsic variance} to prove Theorem \ref{Ricciapprox}. { }}

\subsection{Estimating an orthonormal basis of the tangent space to a
submanifold at a point}

Let $x\in\Sigma^{d}$ where $\Sigma^{d}$ is again a smooth $d$-dimensional
submanifold of $\mathbb{R}^{N}$ and let $\xi_{1},\xi_{2}\ldots,\xi_{n}$ be
data points on $\Sigma^{d}$. As pointed out in the introduction, suppose that
we fix an embedding $F:\Sigma^{d}\rightarrow\mathbb{R}^{N}$ so that we obtain
a metric $g$ in $\Sigma$ induced by $F$ given in local coordinates by
\begin{align*}
g_{ij}=\langle D_{i}F,D_{j}F\rangle,
\end{align*}
where of course $\langle\cdot,\cdot\rangle$ is the inner product of
$\mathbb{R}^{N}$. We assume that the data points $\{\xi_{j}\}_{j=1}^{d}$ are
uniformly distributed and assuming that we have sufficiently many data points
we expect that many of the points $\{\xi_{j}\}$ will concentrate near $x$.
More precisely, let us fix a positive number $\epsilon>0$ and let
$B_{\epsilon}(x)$ be the geodesic ball of radius $\epsilon$ centered at $x$
with respect to $g$ and let $\{\eta_{1},\ldots,\eta_{N_{\epsilon}}\}$ be the
intersection of $B_{\epsilon}(x)$ with the set $\{\xi_{1},\ldots,\xi_{n}\}$.
For large $n,$ and an analysis similar to the one presented in the previous
section, it is clear that for large $n$, we can choose $\epsilon$ such that
$N_{\epsilon}>\!\!> d$ but $N_{\epsilon}<\!\!<n$. We now choose a function
$\Phi$ satisfying

\begin{itemize}
\item $\Phi$ is supported in $[0,1]$,

\item $\Phi$ is non increasing in $[0,1]$,

\item $\Phi$ is $C^{2}$ on $[0,1]$.
\end{itemize}

One common choice for $\Phi$ is $\Phi(s)=(1-s^{2})\chi_{[0,1]}(s)$. After
choosing $\Phi$, we consider a $N_{\epsilon}\times N_{\epsilon}$ diagonal
matrix $D_{\epsilon}$ with diagonal entries
\begin{align*}
(D_{\epsilon})_{jj}=\sqrt{\Phi\left(  \frac{\|x-\eta_{j}\|}{\epsilon}\right)
},
\end{align*}
for $j=1,\ldots,N_{\epsilon}$. At the same time, consider now the $N\times
N_{\epsilon}$ matrix $X_{\epsilon}$ whose rows are given by
\begin{align*}
X_{\epsilon}=\left[  \eta_{1}-x,\ldots,\eta_{N_{\epsilon}}-x\right]  ,
\end{align*}
and the $N\times N_{\epsilon}$ matrix
\begin{align*}
W_{\epsilon}=X_{\epsilon}D_{\epsilon}.
\end{align*}
The idea of constructing the matrix $W_{\epsilon}$ above is to weight the data
points so that those points that are closer to $x$ are given preference. Next,
the matrix $W_{\epsilon}$ admits a \emph{singular value decomposition} of the
form
\begin{align*}
W_{\epsilon}=U_{\epsilon}\Lambda_{\epsilon}V_{\epsilon}^{T},
\end{align*}
where $U_{\epsilon}$ is a $N\times N_{\epsilon}$ matrix whose columns are
orthonormal in the Hilbert-Schmidt norm and known as the \emph{left singular
vectors} of $W_{\epsilon}$, and $\Lambda_{\epsilon}$ is a diagonal matrix with
non increasing diagonal elements that describe the relative importance of the
vectors. Recall that the Hilbert-Schmidt norm is defined for $N\times d$
matrices by
\begin{align*}
\|A\|_{HS}=\sqrt{\mathrm{tr}(A^{T}A)}.
\end{align*}
We now consider a $N\times d$ matrix $U_{\epsilon}$ with orthonormal columns
given by taking the first $d$ orthonormal singular vectors of $U_{\epsilon}$
(each column of $U_{\epsilon}$ is a singular vector for $W_{\epsilon}$).
Alternatively, or if $d$ is unknown, we can choose the vectors whose weights
in $\Lambda_{\epsilon}$ are bigger than a chosen cutoff value, for example
$1/2$. On a smooth manifold these will agree for large numbers of points. We
will write $U_{\epsilon}$ as
\begin{align}
\label{Oeps}U_{\epsilon}=\left[  \zeta_{1,\epsilon}(x),\ldots,\zeta
_{d,\epsilon}(x)\right]  .
\end{align}
The vectors $\zeta_{j,\epsilon}(x)$ for $j=1,\ldots,d$ form an orthonormal
basis for a $d$-dimensional subspace of $\mathbb{R}^{N}$, in fact, this basis
will serve as an approximation for a basis to the tangent space $T_{x}\Sigma$.
Strictly speaking, the vectors $\zeta_{j,\epsilon}$ depend on the point $x$,
and the data points $\xi_{1},\ldots,\xi_{n}$, but for now we will omit that
dependence. It is important to keep in mind though that the dependence of
$\zeta_{j,\epsilon}$ on these parameters is highly non-linear and in principle
it should create a problem of correlation, however, the empirical theoretic
methods that we have discussed above will allow us to deal with this high correlation

Another important part of the construction is the choice of $\epsilon$, in
fact, as the number of data points tends to infinity, $\epsilon$ will tend to
zero at a definite rate, more precisely, we will choose $\epsilon$ to be
essentially a negative power of $n$ (the number of data points). With these
observations in mind, we can state the main theorem that asserts that the
columns of the matrix $U_{\epsilon}$ defined above converge to a basis of the
tangent space $T_{x}\Sigma^{d}$.

\begin{theorem}
[See Theorem B.1 in \cite{SW12}]\label{localPCAthm} Let $\{\xi_{j}\}_{j=1}%
^{n}$ be a uniformly distributed sample of data points on the embedded
submanifold $\Sigma^{d}\subset\mathbb{R}^{N}$ and let us choose $\epsilon$ by
$\epsilon=\epsilon_{n}=O(n^{-\frac{6}{d+2}})$. There exists a $N\times d$
matrix $\Theta_{\infty}(x)$ whose columns are a basis of $F_{*}\left(
T_{x}\Sigma^{d}\right)  $ and such that with high probability (w.h.p.)
\begin{align*}
\min_{U \in O(d)} \|U^{T}U _{\epsilon_{n}}-\Theta_{\infty}(x)\|_{HS}%
=O(n^{-\frac{3}{d+2}}).
\end{align*}


\end{theorem}

\begin{remark}
\emph{{ The estimate with high probability of $\Theta_{\infty}(x)$ by
$U_{\epsilon_{n}}$ implies that $U_{\epsilon_{n}}$ converges to $\Theta
_{\infty}(x)$ almost surely. } }
\end{remark}

In the next section we show how to use Theorem \ref{localPCAthm} to prove
Theorem \ref{Ricciapprox}.

\subsection{Proof Of Theorem \ref{Ricciapprox}}

We start by observing that if one knows a tangent vector $\eta$ to $\Sigma$ to
a point $x$, then one can construct a test function such that the iterated
Carr\'{e} du Champ applied to that function is precisely $\mathrm{Ric}%
_{x}(\eta,\eta)$.

\begin{proposition}
\label{propetaf} Let $F:\Sigma^{d}\rightarrow\mathbb{R}^{N}$ be an embedding
of $\Sigma$ in $\mathbb{R}^{N}$ and let $f:\mathbb{R}^{N}\rightarrow
\mathbb{R}$ be given by $f(z)=\langle z,\eta\rangle$ where $\eta\in
\mathbb{R}^{N}$ is fixed. If $g$ is the metric induced by the embedding
$F:\Sigma\rightarrow\mathbb{R}^{N}$ and if we fix a point $x\in\Sigma$ we
obtain
\begin{align*}
\Gamma_{2}(\Delta_{g},f,f)(x)=\mathrm{Ric}_{x}(\eta^{T},\eta^{T})+\left(
(\eta)^{\perp}\right)  ^{2}\|\mathrm{II}_{\Sigma}\|^{2}_{\Sigma,x},
\end{align*}
where $\eta^{T}$ and $\eta^{\perp}$ are the orthogonal projections of $\eta$
onto $F_{*}\left(  T_{x}\Sigma\right)  $ and $\left(  F_{*}\left(  T_{x}%
\Sigma\right)  \right)  ^{\perp}$ respectively and $\mathrm{II}_{\Sigma}$ is
the second fundamental form of $\Sigma$. In particular, if $\eta\in
F_{*}\left(  T_{x}\Sigma\right)  $ we have
\begin{align*}
\Gamma_{2}(\Delta_{g},f,f)(x)=\mathrm{Ric}_{x}(\eta,\eta).
\end{align*}

\end{proposition}

\begin{proof}
For simplicity let us only prove the codimension 1 case. By \eqref{bochner} we
have
\begin{align*}
\Gamma_{2}(\Delta_{g},f,f)(x)=\mathrm{Ric}_{x}(\nabla_{\Sigma} f,\nabla
_{\Sigma} f)+\left\|  \nabla^{2}_{\Sigma}f\right\|  ^{2}_{\Sigma,x}.
\end{align*}
Observe that $(\nabla_{\Sigma}f)_{x}$ is obtained by taking the projection of
$Df$ (ambient derivative) onto $F_{*}\left(  T_{x}\Sigma\right)  $ and
therefore $(\nabla_{\Sigma}f)_{x}=\eta^{T}$. Observe now that we have
\begin{align*}
\nabla^{2}_{\Sigma}f=D^{2}f-(Df)^{\perp}\mathrm{II}_{\Sigma}=-(Df)^{\perp
}\mathrm{II}_{\Sigma}.
\end{align*}
The proposition follows.
\end{proof}

Next, we show how the approximate Ricci is constructed from the basis. For
each of the vectors $\zeta_{j}(x)$ determined by the PCA, consider the linear
function
\[
f_{n,j}(z)=\langle z,\zeta_{j}\rangle
\]
and then define
\begin{equation}
\hat{R}_{i,j}=\hat{\Gamma}_{2}(L_{t_{n}},f_{n,i},f_{n,j}). \label{pcaricci}%
\end{equation}
For any vector (in the tangent space, approximate tangent space, or neither)
we can define the approximate Ricci curvature of $\eta$ by projecting $\eta$
onto the vectors $\zeta_{j}$ and summing the linear combination as follows.
Projecting onto the approximate basis and splitting the vector, let
\[
\eta=\eta^{A}+\eta^{\perp}=\eta^{j}\zeta_{j}+\eta^{\perp}%
\]
and define
\[
\hat{\mathrm{Ric}}(\eta,\eta)=\hat{R}_{i,j}\eta^{i}\eta^{j}.
\]

\begin{proof}
[Proof of Theorem \ref{Ricciapprox}]For a given vector $\eta\in T_{x}M$ define
the function $f(z)=\langle z,\eta\rangle$ as above. Let
\[
\eta=\eta^{A}+\eta^{\perp}%
\]
and define $f^{A}(z)=\langle z,\eta^{A}\rangle$ and $f^{\perp}(z)=\langle
z,\eta^{\perp}\rangle$ \ so that
\begin{equation}
\hat{\mathrm{Ric}}_{x}(\eta,\eta)=\hat{\Gamma}_{2}(L_{t_{n}},f^{A},f^{A}).
\label{aRicf}%
\end{equation}

Now we compute the difference of the actual Ricci and the approximate Ricci,
using Proposition \ref{propetaf}, and (\ref{aRicf})
\begin{align}
\mathrm{Ric}_{x}(\eta,\eta)-\hat{\mathrm{Ric}}_{x}(\eta,\eta)  &  =\Gamma
_{2}(\Delta_{g},f,f)-\hat{\Gamma}_{2}(L_{t_{n}},f^{A},f^{A}))\\
&  =\Gamma_{2}(\Delta_{g},f,f)-\Gamma_{2}(\Delta_{g},f^{A},f^{A})\\
&  +\Gamma_{2}(\Delta_{g},f^{A},f^{A})-\Gamma_{2}(L_{t_{n}},f^{A},f^{A})\\
&  +\Gamma_{2}(L_{t_{n}},f^{A},f^{A})-\hat{\Gamma}_{2}(L_{t_{n}},f^{A},f^{A})
\end{align}
Observe that clearly, all functions $f^{A}$ are in the class $\mathscr{L}_{M}$
for some fixed $M>0$ and therefore
\[
\left\vert \left(  \hat{\Gamma}_{2}(L_{t_{n}},f^{A},f^{A})-\Gamma_{2}%
(L_{t_{n}},f^{A},f^{A})\right)  \right\vert \leq\sup_{f\in\mathscr{L}_{M}}%
\sup_{\xi\in\Sigma}\left\vert \hat{\Gamma}_{2}(L_{t_{n}},f,f)(\xi)-\Gamma
_{2}(L_{t_{n}},f,f)(\xi)\right\vert ,
\]
and from Theorem \ref{extrinsic variance}, with the given choice of scale
$t_{n},$ we have that
\[
\left\vert \Gamma_{2}(L_{t_{n}},f^{A},f^{A})-\hat{\Gamma}_{2}(L_{t_{n}}%
,f^{A},f^{A})\right\vert \overset{\mathrm{a.s.}}{\longrightarrow}0.
\]
Similarly, it follows from Theorem \ref{expansion bias} that
\[
\left\vert \Gamma_{2}(\Delta_{g},f^{A},f^{A})-\Gamma_{2}(L_{t_{n}},f^{A}%
,f^{A})\right\vert \overset{}{\longrightarrow}0.
\]
Now certainly, as the PCA is choosing linear functions that recover the
tangent space with high probability in the limit, the linear functions $f^{A}$
converge with high probability uniformly in all orders to the function $f$ on
the manifold. \ It follows that the term
\[
\left\vert \Gamma_{2}(\Delta_{g},f,f)-\Gamma_{2}(\Delta_{g},f^{A}%
,f^{A})\right\vert \overset{\mathrm{a.s}}{\longrightarrow}0.
\]
Combining the above three limits we have
\[
\left\vert \mathrm{Ric}_{x}(\eta,\eta)-\hat{\mathrm{Ric}}_{x}(\eta
,\eta)\right\vert \overset{\mathrm{a.s}}{\longrightarrow}0.
\]

\end{proof}

\subsection{Dimension estimation} \label{five3}
 The problem of estimating the dimension of the underlying
submanifold assuming the manifold hypothesis is a fascinating subject in
itself and there is a large number of estimators for the intrinsic dimension
in the manifold learning problem that have been proposed in the literature.
Some of the early approaches for dimension estimation were based on PCA or also the application of the Vapnik-Chervonenkis classes of
sets of separating hyperplanes \cite{Vap98}. These methods lose effectiveness
when applied to relatively highly nonlinear problems or in the presence of
noise. There is a wide variety of methods that have been proposed in order to
overcome these difficulties, and many of them use of techniques in machine
learning that are nowadays well known, for example methods based on suitable
maximum likelihood estimators applied to distances to the {$k$ Nearest
Neighbors (kNN)}. See for example \cite{RLR11,RLCC12} where many of these
ideas are discussed in detail. There are also fractal-based methods, and
methods based on the concept of ISOMAP, which consists in estimating the
distance of points nearby by the geodesic distance whenever possible and, and
estimating the distance between points that are far apart by means of the
shortest path in the graph that is used to approximate the submanifold. See
for example \cite{BusSomm98}, \cite{CaVin02} and \cite{WM08} and the
references therein. See also the discussion in \cite[page 1073]{SW12}, the
references therein and \cite{little11}.

\bibliographystyle{plain}
\bibliography{coarse_ricci}

\end{document}